\newcommand{\cls}[1]{\overline{#1}}
\newcommand{\nablaLCC}{\hat \nabla}
\newcommand{\withboundaryM}{{M}}
\newcommand{\withboundaryA}{{A}}
\newcommand{\withboundaryS}{{S}}
\newcommand{\entorno}[1]{\widehat{#1}}
\newcommand{\functionspace}{\mathcal{F}}
\newcommand{\R}{\mathbb{R}}
\newcommand{\Lo}{\mathbb{L}}
\newcommand{\SSS}{\mathbb{S}}
\newcommand{\Ng}{\mathcal{N}} 
\newcommand{\nablaM}{\nabla}
\newcommand{\nablai}[1]{{\leftindex^{#1}\nabla}}
\newcommand{\second}[1]{\leftindex^{#1}II}
\newcommand{\lc}[1]{\mathcal{C}_{#1}}
\newcommand{\tmv}[1]{\mathbb{T}_{#1}}
\newtheorem{theorem}{Theorem}[section]
\newtheorem{proposition}[theorem]{Proposition} \newtheorem{lemma}[theorem]{Lemma}
\newtheorem{corollary}[theorem]{Corollary} 
\newtheorem{definition}[theorem]{Definition}
\newtheorem{exe}[theorem]{Example}
\newtheorem{remark}[theorem]{Remark}
 \newtheorem{convention}[theorem]{Convention}
\tikzset{
  pics/carc/.style args={#1:#2:#3}{
    code={
      \draw[pic actions] (#1:#3) arc(#1:#2:#3);
    }
  }
}
\tikzset{mylabel/.style  args={at #1 #2  with #3}{
    postaction={decorate,
    decoration={
      markings,
      mark= at position #1
      with  \node [#2] {#3};
 } } } }
\begin{document}

\markboth{J. Herrera and M. Sánchez}{Finslerian lightconvex boundaries: applications to causal simplicity and the space of cone geodesics $\Ng$}


\author[1]{J\'onatan Herrera\footnote{Corresponding author. Email: jherrera@uco.es}}
\affil[1]{Departamento de Matemáticas, Edificio Albert Einstein, Universidad de Córdoba, Campus de Rabanales, 14071, Córdoba, Spain}
\author[2]{Miguel S\'anchez}
\affil[2]{Departamento de Geometría y Topología, Facultad de Ciencias, Universidad de Granada, Campus Fuentenueva S/N, 18071, Granada, Spain}

\title{Finslerian lightconvex boundaries: \\ applications to causal simplicity and the space of cone geodesics $\Ng$}

\date{}

\maketitle

\begin{abstract} 
Our  outcome is structured in the following sequence: (1) 
a general result for indefinite Finslerian manifolds with boundary $(M,L)$  showing the equivalence between local and infinitesimal (time, light or space) convexities for the boundary $\partial M$, (2) for any cone structure $(M,\lc{} )$ which is globally hyperbolic with timelike boundary, the equivalence among: (a) the boundary $\partial M$ is lightconvex, (b) the interior $\mathring{M}$ is causally simple and (c) the space of the cone (null) geodesics $\Ng$ of $(\mathring{M},\lc{} )$ is Hausdorff,  (3) in this case, the manifold structure of $\Ng$ is obtained explicitly in terms of elements in $\partial M$ and a smooth Cauchy hypersurface $S$, (4) the known results and examples about  Hausdorfness of $\Ng$  are revisited and extended, leading to the notion of {\em causally simple spacetime with $T_2$-lightspace} as a step in the causal ladder  below global hyperbolicity.

The results are   significant for   relativistic (Lorentz)  spacetimes and the writing allows one 
either  to be introduced in  Finslerian technicalities or to skip them. In particular,  asymptotically AdS spacetimes  become examples where the $C^{1,1}$ conformal extensions at infinity yield totally lightgeodesic boundaries, and all the results above apply.  
\end{abstract}

\newpage

\tableofcontents
\newpage

\section{Introduction}

We analyze the relations among some basic ingredients related to 
a general (Lorentz-Finsler) cone structure $\lc{}$ on a manifold $M$ with a timelike boundary $\partial M$: 
the space of its cone (or null) 
geodesics  $\Ng$,   
the lightconvexity of $\partial M$, and its step  (causally simple, globally hyperbolic) in the  causal ladder.  


The space of null geodesics $\Ng$, or  {\em lightspace} for short,  was introduced   by R. Low \cite{Low_1989} (in the Lorentz case); remarkable achievements include  the solution by Chernov and Nemirovski \cite{Chernov_2010} of Low's conjecture on causal linking and the possibility to define  new types of causal boundaries   
\cite{Bautista_2014, Bautista_2022, Chernov_2020, Low_2006}. 
Low's  foundational properties included the existence of a differentiable structure on $\Ng$ when $\lc{}$ is strongly causal. For globally hyperbolic spacetimes,
$\Ng$  also becomes Hausdorff and, then, a standard manifold, identifiable to  one constructed from any smooth Cauchy hypersurface. Low  also emphasized the  elusiveness of   Hausdorffness and posed its characterization  as an open problem (see \cite[pp. 3008, 3015]{Low_2001}). Hedicke and Suhr \cite{Hedicke_2019} exhibited an example of causally simple spacetime with non-Hausdorff $\Ng$ (disproving a conjecture by Chernov \cite{Chernov_2018});    
other related properties on causally simple spacetimes (including affine {\em pseudoconvexity} in \cite{BEE} and the convexity of Riemannian manifolds in \cite{BGS}) where studied by Hedicke et al. \cite{Hedicke-2021-contact, Hedicke_2021}. 

Here, our aim is to show that, in the class of globally hyperbolic cones  $\lc{}$ with a timelike boundary $\partial M$, the causal simplicity of the interior $\mathring{M}$ and the Hausdorffness of the lightspace $\Ng$ of $\mathring{M}$ are equivalent. Moreover, when these properties hold, $\Ng$ can be described explicitly. From the technical viewpoint, the key of the equivalence comes from the property of lightconvexity for $\partial M$, an issue with interest in its own right in the general (semi-)Finslerian setting.  

 Recently, globally hyperbolic (Lorentz) spacetimes-with-timelike-boundary have been studied systematically in \cite{RMI_AFS}. This is a very general class of spacetimes where $\partial M$ may represent either the restriction to a bounded region of a (possibly globally hyperbolic) spacetime without boundary or the set of naked singularities, eventually lying at infinity (as in the case of AdS spacetime). 
Among the properties proved in \cite{RMI_AFS}, it is essential here a global splitting  of the spacetime with its boundary, extending the one in the case without boundary proved by Bernal and one of the authors \cite{BS05}. Indeed, this splitting permits to determine the global structure of $\Ng$ now. 
All the properties to be used here can be extended to the Lorentz-Finsler case \cite[Theorem 3.3]{Sa-BIRS} (see also \cite{Sa_Penrose}
). The detailed Lorentz-Finsler framework developed in \cite{JS20} will be used here. 

The generality of the  class of spacetimes   admitting a timelike boundary, in addition to other results and  examples in \S \ref{s_last}, suggests to consider the class of causally simple $\lc{}$ with $T_2$-lightspace as a new step in the causal ladder of (Finsler) spacetimes.

Next,  a more detailed explanation by sections is given.


\smallskip

\noindent
Section \ref{s2} analyzes the notion of convexity in the general  semi-Finsler setting (i.e.,  permitting any non-degenerate  signature for the fundamental tensor) and introduces briefly the required Finsler background. This includes the notion of anisotropic connection (developed in the Appendix), which permits a treatment  not too far from the standard Lorentzian (or semi-Riemannian) setting.

The relation between the geodesics in the interior  $\mathring{M}$ of the manifold  and the properties of its boundary $\partial M$, is an issue related to classical notions on convexity for $\mathring{M}$ (no geodesic with endpoints in $\mathring{M}$ can touch $\partial M$) and for $\partial M$ (its inner pointing second fundamental form is positive semidefinite). In the Riemannian case, a very geometric proof was obtained by Bishop \cite{Bishop}, under $C^4$ smoothness. The proof could not be extended to the Finslerian case, but one of the authors and his coworkers \cite{Bartolo_2010} gave a more analytic one to both, include this case and optimize regularity to $C^{1,1}$. Hintz and Uhlmann  \cite{Hintz_2017} solved a similar  question for light convexity in Lorentz manifolds. Taking into account these two  previous approaches, here we give a general proof for semi-Finsler metrics. It is worth pointing out that this covers the general abstract case of {\em causal structures of signature $(k+1,l+1)$} developed by O. Makhmali in \cite{Makh} (see below Remark \ref{r_sign_kl}).

Summing up, as a direct consequence of 
 Theorems  \ref{prop:main2:6}, \ref{prop:main2:4}  and Proposition \ref{prop_strictconv}, we obtain:

\begin{theorem}\label{t1}
    For any semi-Finsler manifold $(M^{n+1},L,A)$ with non-degenerate boundary $\partial M$ they are equivalent:
\begin{enumerate}[label=(\roman*)]
    \item $\partial M$ is infinitesimally (resp. light, time or space) convex, i.e., the second fundamental form $\second{\eta}$ with respect to any inwards vector field $\eta$ transverse to $\partial M$  (rigging vector) is positive semidefinite for the vectors of the corresponding type in $T\, \partial M$.

    \item $\partial M$ is locally (resp. light, time or space)
 convex, i.e., any (resp. lightlike,  timelike, spacelike) geodesic $  \gamma: [0,b]\rightarrow \withboundaryM$ with $\gamma(0)=p\in\partial M$ and $\dot{\gamma}(0)\in T_p\partial M$ is contained in $\partial M$.
\end{enumerate}
\end{theorem}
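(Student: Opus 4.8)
The plan is to reduce both implications to the scalar function $f=\phi\circ\gamma$, where $\phi$ is a local defining function for the boundary ($\partial M=\{\phi=0\}$ near $p$, with $\phi>0$ on $\mathring{M}$ and $d\phi\neq 0$). Since $\gamma(0)=p\in\partial M$ and $\dot\gamma(0)\in T_p\partial M$, one has $f(0)=0$ and $f'(0)=d\phi(\dot\gamma(0))=0$; and because $\gamma$ takes values in $\withboundaryM$, moreover $f\geq 0$. The whole theorem then hinges on controlling $f''$ along $\gamma$. Working with the anisotropic connection of the Appendix (reference vector $\dot\gamma$), the geodesic equation $\nabla_{\dot\gamma}\dot\gamma=0$ gives $f''=(\nabla d\phi)(\dot\gamma,\dot\gamma)$, the anisotropic Hessian of $\phi$. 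Differentiating $d\phi(W)=0$ for $W$ tangent to $\partial M$ and using $d\phi(\eta)>0$ for the inward rigging, one obtains on $\partial M$ the identity $(\nabla d\phi)(v,v)=-\,\second{\eta}(v,v)\,d\phi(\eta)$ for $v\in T_p\partial M$, which converts the sign of the second fundamental form into a sign for $f''$.

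For (ii) $\Rightarrow$ (i) I would argue by contraposition, using only the second-order jet of $f$ at $0$. If $\second{\eta}$ fails to be positive semidefinite on the vectors of the relevant causal type, pick $v\in T_p\partial M$ of that type with $\second{\eta}(v,v)<0$ and let $\gamma$ be the geodesic with $\dot\gamma(0)=v$. Since the causal character is preserved along geodesics, $\gamma$ is of the prescribed type, and by the identity above $f''(0)=-\second{\eta}(v,v)\,d\phi(\eta)>0$. Hence $f>0$ for small $t>0$, so $\gamma$ enters the interior $\mathring{M}$ and is not contained in $\partial M$, contradicting local convexity.

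For (i) $\Rightarrow$ (ii), the substantive direction, I would set up a differential inequality. Decomposing $\dot\gamma(t)=T(t)+\lambda(t)\eta$ with $T(t)$ tangent to the level set $\{\phi=f(t)\}$ through $\gamma(t)$ and $\lambda(t)=f'(t)/d\phi(\eta)=O(f')$, and expanding $(\nabla d\phi)(\dot\gamma,\dot\gamma)$, the cross and $\eta$-$\eta$ terms are $O(|f'|)$. The principal term $(\nabla d\phi)(T,T)$ is estimated by continuity: $T(t)$ differs from the right-type vector $\dot\gamma(0)\in T_p\partial M$ by $O(|f'|)$ while $\gamma(t)$ lies within $O(f(t))$ of $\partial M$, so infinitesimal convexity (the boundary value being $\leq 0$) yields $(\nabla d\phi)(T,T)\leq C(f+|f'|)$. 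Altogether
\[
f''(t)\le C\bigl(f(t)+|f'(t)|\bigr),\qquad f(0)=f'(0)=0,\quad f\geq 0,
\]
on a small interval, and an ODE-uniqueness (Gronwall-type) lemma forces $f\equiv 0$, i.e. $\gamma\subset\partial M$ near $p$; a continuation argument then propagates this along all of $[0,b]$.

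The main obstacle is this last estimate under \emph{mere} semidefiniteness: one must upgrade the sign information from exactly tangent vectors of the exact causal type on $\partial M$ to the nearby, slightly off-type vectors $T(t)$ produced along $\gamma$, controlling simultaneously the distance $f(t)$ to the boundary and the causal-type mismatch $O(|f'(t)|)$. This is precisely where the analytic approach of \cite{Bartolo_2010}, rather than Bishop's geometric one, is needed, and where the anisotropic-connection formalism keeps the direction-dependent Finslerian second fundamental form under control; the auxiliary strict-convexity statement (Proposition \ref{prop_strictconv}) supplies the comparison geometry that makes the Gronwall step rigorous across the light, time, and space cases uniformly.
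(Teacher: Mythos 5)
Your (ii)$\Rightarrow$(i) direction is exactly the paper's Proposition \ref{prop_strictconv}: the second--order jet of $x^{n+1}\circ\gamma$ at $0$ forces the geodesic into $\mathring{M}$ whenever $\second{\eta}(w,w)<0$. The problem is in (i)$\Rightarrow$(ii), and it sits precisely at the step you yourself call ``the main obstacle'': you name it but do not close it. Your Gronwall scheme needs the estimate $\hbox{{\rm Hess}}^{\dot\gamma}_{\phi}(T,T)\le C\bigl(f+|f'|\bigr)$, which requires approximating $T(t)$ by a vector that is simultaneously \emph{tangent to $\partial M$} and \emph{of the exact causal type}, at distance $O(f+|f'|)$. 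For the timelike, spacelike and unrestricted cases this works because those vectors form an \emph{open} subset of $A\cap T\,\partial M$; that is exactly the scope of the argument of \cite{Bartolo_2010} which the paper invokes in Theorem \ref{prop:main2:6}. For the lightlike case the good set is thin ($\lc{}\cap T\,\partial M$ has positive codimension in $T\,\partial M$), and a vector that is $\epsilon$-close to being lightlike and $\epsilon$-close to being tangent is $O(\epsilon)$-close to $\lc{}\cap T\,\partial M$ only if the cone meets $T_p\partial M$ transversally --- i.e.\ only under the non-degeneracy hypothesis of Definition \ref{d_no_degenerate}, which your argument never uses even though it appears in the statement. Deferring to ``the analytic approach of \cite{Bartolo_2010}'' does not repair this (that proof explicitly needs convexity on an open set of directions), and Proposition \ref{prop_strictconv} is the converse implication and contributes nothing to the Gronwall step.

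The paper resolves the lightlike case by a different mechanism altogether (Theorem \ref{prop:main2:4}, following Hintz--Uhlmann): it foliates the exterior region $\{x^{n+1}\le 0\}$ by hypersurfaces $D_\epsilon$ that are \emph{strongly} lightconvex, shows by a maximum argument on $\epsilon(t)$ that a lightlike geodesic leaving $p$ outwards cannot return to $\partial M$, and then treats the tangent geodesic as a limit of outward-pointing lightlike geodesics --- the limit being where non-degeneracy is actually used, to guarantee that such approximating geodesics exist. If you want to keep your unified ODE-comparison route you must (a) make the correction of $T(t)$ to an exactly lightlike, exactly tangent vector quantitative via the transversality of $\lc{p}$ and $T_p\partial M$, and (b) account for the change of reference direction in the anisotropic Hessian $\hbox{{\rm Hess}}^{v}$, which varies with $v$ in the Finsler setting. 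As written, the lightlike case of (i)$\Rightarrow$(ii) is not proved.
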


\smallskip

\noindent Section \ref{s3} focuses on the relation between  causal simplicity for $\mathring{M}$ and  lightconvexity for $\partial M$, which will be closely related to the Hausdorffness of the lightspace $\Ng$. The necessary elements on cone structures and causality in manifolds with boundary are recalled. An essential ingredient of the proof of our main result (Theorem \ref{thm:main2:4}) is the previous equivalence of lightconvexities. 

The following two subtleties   are also worth mentioning. 
First, even in the Lorentz case with $C^\infty$-boundary $\partial M$, the  horismotic (and causal)  relation must be computed by lowering the piecewise smooth regularity of the curves \cite[Section 2.4, Appendix B]{RMI_AFS}. Consistently, locally Lipschitz curves $H^1$-parametrized will be used here (see around footnote \ref{f_RMI}). 
Second, in the Finslerian setting one has two natural choices of  anisotropic connections: (i) the Berwald anisotropic connection, which is very simple (as it is computed only from the geodesic spray) and widely used along this paper, and (ii) the Levi-Civita-Chern anisotropic one which  is more entangled (see the Appendix) but it is required at this point of our proof. Of course, in the Lorentz setting both anisotropic connections match the (linear) Levi-Civita connection, and the reader interested only in this case  can skip this. 

As a consequence of Theorem \ref{thm:main2:4} and the preliminary results on the lightspace   in \S \ref{s_4} (Propositions \ref{p_noHaus}, \ref{p_Haus}), we obtain:

\begin{theorem}\label{t2}
For any globally hyperbolic cone structure $\lc{}$ on a manifold-with-timelike-boundary $M^{n+1}$ the following properties are equivalent:

\begin{enumerate}
    \item The boundary $\partial M$ is lightconvex. 
    \item The interior $\mathring{M}$ is causally simple. 
    \item The space of cone  geodesics $\Ng$ of $\mathring{M}$ is  Hausdorff. 
\end{enumerate}
\end{theorem}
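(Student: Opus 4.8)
The plan is to close a cycle of implications among (1), (2) and (3), exploiting that the hardest equivalence, namely (1)~$\Leftrightarrow$~(2), is already available as Theorem~\ref{thm:main2:4}: in the globally hyperbolic setting the causal simplicity of $\mathring{M}$ is tied to the lightconvexity of $\partial M$, and Theorem~\ref{t1} lets one pass freely between the infinitesimal and local formulations of lightconvexity. Thus the genuine remaining work is to tie the Hausdorffness of $\Ng$ to lightconvexity of $\partial M$, and I would do this through the two preparatory results on the lightspace, Propositions~\ref{p_noHaus} and~\ref{p_Haus}.

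First I would establish (3)~$\Rightarrow$~(1) in contrapositive form. Assuming $\partial M$ is not lightconvex, Theorem~\ref{t1} furnishes a lightlike geodesic $\gamma:[0,b]\to M$ with $\gamma(0)=p\in\partial M$ and $\dot{\gamma}(0)\in T_p\partial M$ that is not contained in $\partial M$, hence enters $\mathring{M}$. The idea is that $\gamma$, together with the cone geodesics obtained by pushing its initial point slightly inwards, yields a sequence of cone geodesics of $\mathring{M}$ admitting two distinct limits in $\Ng$: a boundary-grazing branch and a genuinely interior branch. This is exactly the inseparability phenomenon packaged in Proposition~\ref{p_noHaus}, so $\Ng$ is not Hausdorff.

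For (1)~$\Rightarrow$~(3) (equivalently (2)~$\Rightarrow$~(3)) I would invoke Proposition~\ref{p_Haus}. The strategy is to use the global splitting of the globally hyperbolic spacetime-with-timelike-boundary from \cite{RMI_AFS} together with a smooth Cauchy hypersurface $S$: every inextensible cone geodesic of $\mathring{M}$ is pinned down either by its crossing data on $S$ or, for geodesics terminating at infinity/$\partial M$, by its endpoint data on $\partial M$, and lightconvexity guarantees that these data vary continuously and separate distinct geodesics. Two distinct points of $\Ng$ are then separated by open sets, whence $\Ng$ is Hausdorff. Assembling the pieces, (1)~$\Leftrightarrow$~(2) by Theorem~\ref{thm:main2:4}, (3)~$\Rightarrow$~(1) by Proposition~\ref{p_noHaus}, and (1)~$\Rightarrow$~(3) by Proposition~\ref{p_Haus}, which delivers the full three-way equivalence.

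The main obstacle is the Hausdorffness direction: controlling the quotient topology of $\Ng$ globally and showing that a convergent sequence of cone geodesics cannot produce two genuinely different limits once boundary-grazing has been excluded by lightconvexity. The delicate part is the treatment of cone geodesics that asymptote to or terminate on $\partial M$, since the reduced regularity of the horismotic relation noted in the introduction forces one to argue with locally Lipschitz, $H^1$-parametrized representatives rather than with smooth curves; keeping the limit-curve and perturbation arguments consistent within this class, while still being able to read off the separating data on $S$ and $\partial M$, is where the care is concentrated.
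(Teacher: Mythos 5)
Your proposal is correct and follows essentially the same route as the paper, which obtains Theorem~\ref{t2} precisely by combining Theorem~\ref{thm:main2:4} for (1)$\Leftrightarrow$(2), Proposition~\ref{p_noHaus} (in contrapositive) for (3)$\Rightarrow$(1), and Proposition~\ref{p_Haus} for (1)$\Rightarrow$(3). The only minor difference is your sketch of how Proposition~\ref{p_Haus} itself would be proved (via the splitting and separating data on $S$ and $\partial M$), whereas the paper's proof of that proposition is a sequential compactness argument in causal diamonds; but since both you and the paper invoke that proposition as the key lemma, the proof of the theorem is the same.
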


\smallskip \noindent
Section \ref{s_4} studies  the  lightspace (space of lightlike or {\em cone} geodesics for any $\lc{}$)  $\Ng$  in detail. Subsection~\ref{s_4.1} revisits Low's approach \cite{Low_1989, Low_2001, Low_2006} and proves the aforementioned results on Hausdorffness for the globally hyperbolic case with boundary. Subsection~\ref{s_4.2} goes beyond by showing  that, in this class, 
the global structure of   $\Ng$ as a manifold can be determined explicitly  (see Theorem~\ref{t_Ng_estructuraglobal}):

\begin{quote} {\em When the equivalent properties in Theorem \ref{t2} hold,   the differentiable structure of $\Ng$ as a smooth Hausdorff $(2n-1)$-manifold (without boundary), can be described explicitly in terms of  three elements: the projectivization of the tangent bundle $TS$ of any spacelike  Cauchy hypersurface  $S$ in $M^{n+1}$, the set of inwards lightdirections starting at points in $J^+(S)\cap \partial M$ and the set of  outwards lightdirections  starting at points in $J^-(S)\cap \partial M$. }
\end{quote}
This description can be simplified in some cases, as the following one (Corollary \ref{cor:partialM}):
\begin{quote}
    {\em In particular, if every cone geodesic has a future  endpoint at $\partial M$, then $\Ng$ is diffeomorphic to $\R\times T \, \partial S$.}
\end{quote}

In Subsection \ref{s_4.3},  all the previous results are applied to anti de Sitter (AdS) spacetime, and generalized to spacetimes whose global conformal structures behave as  AdS at infinity, i.e., satisfying a weak  condition on AdS asymptoticity.
It is worth pointing out that the structure of the lightspace $\Ng$ for AdS spacetime  is widely simplified because it lies under the case quoted above. However, this property of cone geodesics may be spoiled for asymptotic AdS spacetimes (because of their behaviour at finite distance). Summing up (Remark \ref{r_asin_AdS}):

\begin{corollary} \label{c1} Any asymptotically  anti-de Sitter spacetime (according to Definition \ref{def:Cuestionespendientes:1}) has a totally lightgeodesic timelike boundary. Thus, it is causally simple and its  lightspace $\Ng$  is Hausdorff.
 Moreover,  $\Ng$ contains an open subset  diffeomorphic to $\R\times T \mathbb{S}^{n-1}$, and this subset is equal to $\Ng$ in the case of AdS spacetime.   
\end{corollary}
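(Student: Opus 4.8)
The plan is to deduce all four assertions from results already in hand, the only genuinely new input being a local computation at $\partial M$. First I would establish the geometric heart of the statement: the conformal boundary of an asymptotically AdS spacetime is \emph{totally lightgeodesic}, i.e. every cone geodesic tangent to $\partial M$ at some point lies entirely in $\partial M$. Because the unparametrized cone geodesics depend only on the conformal class $\lc{}$, this is a conformally invariant property, so I may compute in the $C^{1,1}$ conformal representative provided by Definition \ref{def:Cuestionespendientes:1}. In that representative exact AdS is realized as half of the Einstein static cylinder, with $\partial M$ the hypersurface $\R\times\mathbb{S}^{n-1}$, which is totally geodesic (the equatorial $\mathbb{S}^{n-1}\subset\mathbb{S}^n$ is totally geodesic and time is a flat factor); hence $\second{\eta}$ vanishes identically, in particular on null directions.

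For a genuinely asymptotic spacetime I would argue that Definition \ref{def:Cuestionespendientes:1} forces the conformal data, together with their transverse derivatives, to agree with the AdS model along $\partial M$ to the order that controls the second fundamental form; the $C^{1,1}$ regularity is exactly what makes $\second{\eta}$ defined (a.e.) and lets its null part be read off from the limiting model, which vanishes. I expect this to be the main obstacle: one must check that the asymptotic conditions pin down not only the boundary metric but enough of its normal behaviour to conclude that $\second{\eta}$ annihilates the lightlike vectors of $T\,\partial M$, and that the conformal rescaling intrinsic to the completion does not interfere --- which it does not, since being totally lightgeodesic is equivalent, via Theorem \ref{t1}, to local light convexity of $\partial M$ from either side and is therefore conformally invariant.

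Granting a totally lightgeodesic boundary, the first three conclusions follow at once. A vanishing $\second{\eta}$ is in particular positive semidefinite on lightlike vectors, so $\partial M$ is infinitesimally lightconvex and hence, by Theorem \ref{t1}, lightconvex; since asymptotically AdS spacetimes are globally hyperbolic with timelike boundary, Theorem \ref{t2} applies and yields both causal simplicity of $\mathring{M}$ and Hausdorffness of $\Ng$.

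Finally, for the manifold structure of $\Ng$ I would treat AdS first, where every cone geodesic travels from boundary to boundary and thus has a future endpoint on $\partial M$; Corollary \ref{cor:partialM} then gives $\Ng\cong\R\times T\,\partial S$ with $\partial S=\mathbb{S}^{n-1}$, i.e. $\Ng\cong\R\times T\mathbb{S}^{n-1}$. In the merely asymptotic case some cone geodesics may fail to escape to infinity because of finite-distance behaviour, so Corollary \ref{cor:partialM} cannot be invoked globally; instead I would single out the open set of cone geodesics that do reach $\partial M$ and note that, since the conformal structure near the boundary coincides with that of AdS, this set is diffeomorphic to the AdS lightspace $\R\times T\mathbb{S}^{n-1}$, while the global manifold structure of $\Ng$ is supplied by Theorem \ref{t_Ng_estructuraglobal}. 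This open subset exhausts $\Ng$ precisely when no cone geodesic is trapped, as happens for AdS.
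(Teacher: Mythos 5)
Your proposal is correct and follows essentially the same route as the paper: pass to a conformal representative that extends $C^{1,1}$ across $\partial M$, check that the first transverse derivatives of the extended metric agree with those of the AdS model so that $\second{\eta}$ vanishes (this is exactly the content of \eqref{pend:eq:11}--\eqref{pend:eq:20}, which you correctly flag as the one computation to be carried out), and then invoke Theorems \ref{t1}, \ref{t2} and Corollary \ref{cor:partialM} together with the fact that every AdS cone geodesic reaches $\partial M$ in finite $t$. Your only deviation is cosmetic: you read off total geodesy of $\partial(\mathrm{AdS})$ from the Einstein static cylinder picture rather than from the paper's explicit computation of $f(z)$ in \eqref{pend:eq:9}, which is fine since total \emph{light}geodesy is conformally invariant, as you note.
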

The qualitative notions of convexity for the boundary  involving light directions (lightconvexity, eventually strong or total) are conformally invariant, however, their quantitative value depends on the conformal representative, then providing subtle limits of hypersurfaces in $\mathring{M}$ converging to $\partial M$  and boundary geodesics (Remarks \ref{r_totallylightconvex}, \ref{r_AdSlighttotallygeodesic0}, \ref{r_AdSlighttotallygeodesic}). 

\smallskip \noindent Finally, Section \ref{s_last} suggests the inclusion of {\em causally simple Finsler spacetimes with $T_2$-lightspace} as a  step in the standard ladder of causality (straightforwardly extensible to cone structures) between causal simplicity and global hyperbolicity.
An aforementioned example by Hedicke and Suhr \cite[Theorem 2.7]{Hedicke_2019} showed that not all the causally simple spacetimes have a Hausdorff $\Ng$, however, Theorem~\ref{t2} 
provided a large class of  causally simple spacetimes where $\Ng$ is $T_2$. Now, this theorem is complemented with Proposition \ref{p_nocutlight} obtaining:

\begin{corollary}\label{c2}  Causally simple Finsler spacetimes (or cone structures) with $T_2$-lightspace include:
\begin{enumerate}
    \item \label{item_1} The interior $\mathring{M}$ of a globally hyperbolic Finsler spacetime-with-timelike-boundary if and only if the boundary $\partial M$ is lightconvex.
    \item Causally simple Finsler spacetimes whose lightlike geodesics are horismotic (i.e. always maximizing).
\end{enumerate}
\end{corollary}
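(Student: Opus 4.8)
The plan is to prove the two items independently: the first is a direct reading of the equivalences in Theorem~\ref{t2}, while the second is where the genuine lightspace analysis enters and rests on Proposition~\ref{p_nocutlight}.

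For item~\ref{item_1} I would simply unwind the three-fold equivalence of Theorem~\ref{t2}. For a globally hyperbolic cone structure on a manifold-with-timelike-boundary, that theorem identifies lightconvexity of $\partial M$ (its condition~(1)), causal simplicity of $\mathring{M}$ (condition~(2)), and Hausdorffness of $\Ng$ of $\mathring{M}$ (condition~(3)). Hence the assertion ``$\mathring{M}$ is causally simple with $T_2$-lightspace'' is precisely the conjunction of conditions~(2) and~(3), each of which is on its own equivalent to condition~(1). Thus, if $\partial M$ is lightconvex then both~(2) and~(3) follow, so $\mathring{M}$ is causally simple with $T_2$-lightspace; conversely, causal simplicity of $\mathring{M}$ alone already forces lightconvexity of $\partial M$ by the implication $(2)\Rightarrow(1)$. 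No new work is required: the biconditional is exactly Theorem~\ref{t2} read off the relevant pair of implications.

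For the second item causal simplicity is a standing hypothesis, so the only thing left to establish is that the lightspace $\Ng$ is Hausdorff. Here I would start from the characterization of non-Hausdorffness furnished by Propositions~\ref{p_noHaus} and~\ref{p_Haus}: a failure of the $T_2$ property is witnessed by two distinct cone geodesics $\gamma_1\neq\gamma_2$ that are non-separated in the quotient topology, i.e., simultaneous limits of a single sequence of cone geodesics. The key step, which is the content of Proposition~\ref{p_nocutlight}, is to translate this topological configuration into a geometric one: a pair of non-separated cone geodesics must detect a point at which some lightlike geodesic ceases to be maximizing, i.e., a lightlike cut point (equivalently, a point where the geodesic leaves the horismos $\dot{J}^+$ and enters $I^+$ of its earlier points). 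Under the hypothesis that every lightlike geodesic is horismotic, no such cut point exists, so no non-separated pair can occur and $\Ng$ is Hausdorff; combined with causal simplicity this places the spacetime in the announced class.

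The hard part will be the translation step underlying Proposition~\ref{p_nocutlight}: converting pure non-separation in $\Ng$ into the existence of a non-maximizing lightlike segment. This requires controlling a sequence of cone geodesics that converges to two \emph{distinct} limits and showing that the breakdown of uniqueness of the limit forces a cut (or refocusing) point on one of the limit geodesics, via a limit-curve argument adapted to the Finslerian cone setting. By contrast, once the horismotic hypothesis is in force the exclusion of cut points is immediate, so the whole difficulty is concentrated in the geometric interpretation of inseparable skies rather than in the final deduction.
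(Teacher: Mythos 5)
Your proposal is correct and follows exactly the paper's route: item (1) is read off directly from the three-fold equivalence of Theorem~\ref{t2}, and item (2) is precisely Proposition~\ref{p_nocutlight} (whose proof, as you anticipate, turns a non-separated pair of limit cone geodesics into a violation of the maximizing property of the approximating geodesics via closedness of $\leq$ under causal simplicity). No gaps.
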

These results are exemplified  with standard stationary spacetimes (\S \ref{s_stationary}), where Finslerian elements are useful to describe causality (even if the cones come from a Lorentzian metric).  Moreover,  new illustrative counterexamples to  the Hausdorffness of $\Ng$ are provided in \S \ref{s_counterexamples} (see Figures~\ref{fig:2},~\ref{fig:3} and the property quoted in \S \ref{s522}).

\section{Semi-Finsler infinitesimal and local  convexities}\label{s2}

\subsection{Preliminaries on semi-Finsler metrics} 
\label{sec:Introduction}
In this section, we will introduce the basic ingredients for semi-Finsler manifolds that will be required along this paper, following \cite{Javaloyes_2022} (which builds upon \cite{Javaloyes_2019})
as a basic reference. This reference does not consider the case with boundary, so,  we will make a slight adaptation to this case too.

\subsubsection{Conventions on boundaries and  semi-Finsler metrics} 
\label{sec:basic-def}
Let us consider  an $(n+1)$-dimensional connected manifold with boundary $\withboundaryM \equiv \withboundaryM^{n+1}$, 
 let $\partial M$ be its boundary and $\mathring{M}$ its interior.  In general, we will be interested in {\em smooth}  boundaries and elements (as differentiable as necessary), and mention explicitly when only the topological level is used; in particular, $\functionspace(\withboundaryM
)$
denotes the module of smooth functions on $M$. As usual in the setting of spacetimes with boundary (for example,  \cite{RMI_AFS,SolisPHD}), we can assume with no loss of generality that $\withboundaryM$  
lies inside a larger manifold $\entorno{M}$ with the same dimension,  being $\partial M$ the topological boundary of $M$ in $\entorno{M}$; as a rule, elements on $\entorno{M}$ can be regarded as restrictions of elements in $\entorno{M}$ except if otherwise specified.
For any  $p\in M$, $T_p\withboundaryM$ denotes its (n+1)-tangent space (accordingly the tangent bundle is $T\withboundaryM=\cup_{p\in \withboundaryM}T_p \withboundaryM$), and the subspace of vectors tangent to the boundary is denoted  $T_{p} \partial M\subset T_{p} \withboundaryM$, any  $w\in T_{p} \withboundaryM\setminus T_{p} \partial M$ is called \emph{transverse} to the boundary. For $p\in \partial M$, an
{\em adapted chart $(U,\varphi=(x^1,\dots,x^{n+1}))$} (centered at $p$) satisfies:

\begin{enumerate}[label=(\roman*)]
\item $\varphi:U \rightarrow 
\left\{ (x^{1},\dots,x^{n+1})\in \mathbb{R}^{n+1}:x^{n+1}\geq 0 \right\}$,
\item $q\in \partial U:= U\cap(\partial M)$ if and only if $x^{n+1}(q)=0$.
\end{enumerate}
Any adapted chart $(U,\varphi=(x^1,\dots,x^{n+1}))$ also defines a chart on $TU\subset T\withboundaryM$, namely, $(x,y)=(x^1,\dots,x^{n+1},y^1,\dots,y^{n+1})$, where $x^{n+1}\geq 0$ but no restriction on $y^{n+1}$ (or any $y^i$) appears.

Next, we introduce the definition of a semi-Finsler metric on $\withboundaryM$, consistently with\footnote{Here, however, we use the term {\em semi-Finsler}, instead of {\em pseudo-Finsler}, used in \cite{Javaloyes_2022} and its previous bibliography. Our present choice is consistent with the name {\em semi-Riemannian}, introduced by O'Neill \cite{oneill} for the  extension of Riemannian Geometry to (constant non-degenerate) signature. For us, the name pseudo-Finsler, as well as pseudo-Riemannian, remains for  more general objects, such as  signature changing metrics, which might  degenerate somewhere).} \cite{Javaloyes_2022}.

\begin{definition}
\label{def:main2:5}
Let $\withboundaryM^{n+1}$ be a manifold with  boundary and let  $\pi:T\withboundaryM\rightarrow \withboundaryM$ the natural projection. 
Let ${\withboundaryA}\subset T\withboundaryM\setminus \left\{ \mathbf{0}  \right\}$ be an open subset which is conic (namely, for every $v\in {\withboundaryA}$ and $\lambda>0$, $\lambda v\in {\withboundaryA}$) and satisfies that $\pi({\withboundaryA})=\withboundaryM$. We will say that a smooth function\footnote{ The use of $\bar A$ or $A$ here will be irrelevant, as the smoothness al $\partial A$ of $g_v$ in \eqref{eq:21} will imply its extendibility to an open neighborhood maintaining its non-degeneracy. However, the distinction $A, \overline{A}$ is maintanined here for consistency with the Lorentz-Finsler case, where this convention stresses the irrelevance of non-causal  directions.} $L:\cls{A}\rightarrow \mathbb{R}$ defined on the closure $\cls{\withboundaryA}$ of $A$ in $T\withboundaryM\setminus \left\{ 0 \right\}$ is a \emph{semi-Finsler metric} if:

\begin{enumerate}[label=(\roman*)]
\item $L$ is positive homogeneus of degree $2$: that is, $L(\lambda v)=\lambda^2L(v)$ for every $v\in \cls{\withboundaryA}$ and $\lambda>0$.
\item for every $v\in \cls{\withboundaryA}$, the fundamental tensor of $L$, defined as
\begin{equation}
	\label{eq:21}
	g_v(u,w)=\frac{1}{2}\dfrac{\partial^{2}}{\partial t\partial s}L(v+tu+sw)|_{t=s=0}
\end{equation} with $u,w\in T_{\pi(v)}\withboundaryM$, is non-degenerate with constant signature $(+,\dots, +, -, \dots , -)$.
\end{enumerate}
The pair $(M,L)$, emphasized notationally as a triple $(\withboundaryM,L,{\withboundaryA})$,  will be called a \emph{semi-Finsler manifold (with boundary)}.
\end{definition}
\begin{remark}
\label{rem:main2:1} {\em 
(1) The minimum  differentiability  is  $C^{3,1}$ for $L$ and $C^{2,1}$ for regular functions whose levels define the boundaries, so that $g$ becomes $C^{1,1}$ (and can be imposed on any extension without boundary $\entorno{M}$).  

(2)  The signature of $g$ in this definition is imposed to be constant, as $\withboundaryA$ is not assumed to be connected. Anyway,  this restiction will not be necessary,  because we will work either locally (in this section) or with Lorentz signature $(+,-,\dots -)$ (in the others). 

(3) $L$ can be extended continuously as equal to zero at the zero-section, however, it would not be differentiable there except if  $L$ comes from a semi-Riemannian metric (see \cite[Remark 2.13]{JS20}).

(4) For any coordinates $(x,y)$  in $TM$ as above, $g$ can be written as
$$
g_{(x,y)}= g_{ij}(x,y) dx^i dx^j 
$$
where $g_{ij}(x,y)$ is positive 0-homogeneous in $y$ (that is, $g_{ij}(x,\lambda y)= g_{ij}(x,y)$ for $\lambda>0$). Thus, $g_{ij}(x,y)$ can be regarded as the coordinates of a (2-covariant symmetric non-degenerate) {\em anisotropic}  tensor, i.e., a tensor in each direction $y \in \bar A_x$.
} 
\end{remark}

\subsubsection{Anisotropic lightlike structures in arbitrary signature}

By using the semi-Finsler metric as in the semi-Riemannian case, we can define the causal character of a tangent vector $w$ depending on the sign of $L(w)$. As we will be interested  later in Lorentzian signature $(+,-,\dots -)$, $w$ is defined as timelike (resp.  lighlike; causal; spacelike; null) when $L(w)>0$ (resp. $L(w)=0; L(w)\geq 0$; $L(w)<0$; $w$ is either causal or 0), all of them but the 0 vector living in the conic region $A$, consistently with \cite{JS20}. When working in arbitrary signature, the names timelike and spacelike only distinguish the sign of $L(w)$. However, in Lorentzian signature 
 they permit to define causality (see \S \ref{sec:conestructure}) extending relativistic notions. Anyway, the sets of  lightlike vectors   
\begin{equation}
\label{eq:22}
\lc{p}:=\left\{ v\in \cls{\withboundaryA}_p:L(v)=0 \right\}, \qquad
\lc{}   	:= \cup_{p\in \withboundaryM} \lc{p}
\end{equation}
named,  respectively, the cone (at $p$) and the cone structure (of $\withboundaryM$), 
will be of special interest here, for arbitrary signature. Along this section, we will be interested in  properties of $\lc{}$ which are local in the tangent bundle. However, the following considerations are interesting for the global structure of $\lc{p}$ at the tangent space of each point $p$.
Let start recalling the semi-Riemanian case. 

\begin{remark} \label{r_sign_kl}
{\em For a semi-Riemmannian metric $g$ of signature $(k+1,l+1)$ with $k + l = n - 1, k,l \geq 0$, $\lc{p}$ yields a conic hypersurface of $T_pM$ whose positive projectivization (into a sphere bundle) is  homeomorphic to the product of spheres $S^k\times S^{l}$. In the Lorentzian case $l=0$, and $S^{l=0}$ has two points, each one corresponding to a standard Lorentzian cone, which can be locally labelled as a future or past cone. When the Lorentzian manifold is time orientable, $\lc{}$ has two connected components,  each one corresponding to a cone structure in our sense.  }
\end{remark}

Going from the semi-Riemannian to the semi-Finsler case, a general notion of {\em causal structure} $(M, \mathbf{C})$ for any signature, as defined  in \cite{Makh}, turns out essentially equivalent to ours. Namely,  given a
$(n+1)$-manifold M, with $n+1 \geq 3$, a {\em causal structure of signature $(k+1; l+1)$}
with $k, l \geq 0$ and $k + l = n - 1$,  is a sub-bundle $\mathbf{C}$ of the (positively) projectivized tangent bundle $PTM$ whose fibers $\mathbf{C}_p$ ($p\in M$) are projective hypersurfaces and have projective  second fundamental form of signature $(k, l)$ (thus, the latter is non-degenerate).

Projectivizing each $\lc{p}$ in \eqref{eq:22}, it is straightforward to check that our definition of lightlike cone provides a causal structure in this sense. Conversely, such a causal structure  $\mathbf{C}$ can be recovered from a semi-Finsler metric. Indeed,  $\mathbf{C}$ can be regarded as a submanifold in the sphere bundle for an auxiliary Riemanian metric. Then, the required semi-Finsler metric $L$  will be the signed distance $d$ to $\mathbf{C}$ (in a neighborhood of $\mathbf{C}$ where $d$ remains smooth), extended as a 2-homogenous function to a conic subset of $TM$; obviously, such an $L$ is highly non-unique.

In particular, the case $(p=n-1, q=0)$ (thus, the projectivized second fundamental form is a positive definite) corresponds to the projectivization of the cone structures of Lorentz-Finsler metrics to be considered in Section \ref{sec:lorentzfinsler}.
When $k,  l> 0$ the  classical methods of causality (as in the mentioned section) do not apply to $(M,\mathbf{C})$, because its corresponding cone structure would be {\em totally vicious}, i.e., closed timelike curves through any point would exist. However,  the causal structure $(M,\mathbf{C})$ can still be studied by using Cartan methods, as done in \cite{Makh}.


\subsubsection{Geodesic spray and anisotropic connections
} \label{sec:geod spray anis}

As in the semi-Riemannian setting, the semi-Finsler metric $L$ admits (parametrized)  geodesics, defined as the curves $\gamma$ which are  critical for the energy functional $\int L(\dot \gamma)$, and, thus, are characterized by  its Euler-Lagrange equation, i.e. the geodesic equation \eqref{eq:65} below. Then, $L(\dot \gamma)$ must be a constant, so that one can speak on {\em timelike, lightlike, causal} or {\em spacelike geodesics} accordingly.  
Strictly speaking, we will require only geodesics and a related notion of Hessian for a function which are explained below. However, we will work with the Hessian as an anisotropic tensor field obtained from an anisotropic connection $\nablaM$, which is explained further in the Appendix  \ref{A_anisotropic}.

Let us introduce the geodesic spray associated with $L$, following \cite[ Section 6.1]{Javaloyes_2022} and using the convention of sum in repeated indeces.
\begin{definition}
\label{def:main2:13}
Let $(\withboundaryM,L,{\withboundaryA})$ be a semi-Finsler manifold with fundamental tensor $g$. Its \emph{geodesic spray} is the vector field $G$ over $\overline{\withboundaryA}$ (i.e., a section $\overline{\withboundaryA}\rightarrow T\overline{\withboundaryA}$) satisfying:

\begin{enumerate}[label=(\roman*)]
\item ${G}$ is a second order equation, that is, ${G}$ can be written in $\withboundaryA\cap TU$ as:
\begin{equation} \label{eq:62}
	{G}_{(x,y)}=y^{k} \frac{\partial}{\partial x^{k}}|_{(x,y)} - 2 {G}^{k}(x,y)\frac{\partial}{\partial y^{k}}|_{(x,y)}.
\end{equation}
\item ${G}^{k}(x,y)=\Gamma_{ij}^{k}(x,y)y^{i}y^{j}$, where $\Gamma_{ij}^{k}$ are the so-called \emph{formal Christoffel symbols} which are $0$-homogeneous functions on the variable $y$ (i.e., $\Gamma_{ij}^{k}(x,\lambda y)=\Gamma_{ij}^{k}(x,y)$ for $\lambda>0$) given by:
\begin{equation}
	\label{eq:64}
	\Gamma_{ij}^{k}(x,y) = \frac{1}{2}g^{kl}|_{(x,y)} \left( \frac{\partial g_{li}}{\partial x^{j}} + \frac{\partial g_{lj}}{\partial x^{i}} - \frac{\partial g_{ij}}{\partial x^{l}} \right)|_{(x,y)}.
\end{equation}
\end{enumerate}
In particular, $G$ is a {\em spray}, namely, $G$ satisfies \eqref{eq:62}  with each function  $G^k$ positively homogeneous of degree two in $y$ (that is,  $G^k(x,\lambda y)=\lambda^2 G^k(x,y)$).
\end{definition}
Previous geodesic spray allow us to recover geodesics in the following way: a curve $\gamma$ is a geodesic for the geodesic spray ${G}$ if its velocity, seen as a natural lift to $TT\withboundaryM$ is an integral curve of ${G}$. In coordinates, $\gamma(t)=(\gamma^1(t),\dots,\gamma^{n+1}(t))$ is a geodesic if and only if it satisfies the equations:
\begin{equation}
\label{eq:65}
\ddot{\gamma}^{k} + \Gamma_{ij}^{k}(\gamma,\dot{\gamma})\dot{\gamma}^{i}\dot{\gamma}^{j}=0, \qquad \hbox{for $k=1, \dots , n+1$,}
\end{equation}
which are formally equal to the geodesic equations for a semi-Riemannian metric, taking into account that now, the symbols $\Gamma_{ij}^{k}(\gamma,\dot{\gamma})$ depend also on the (direction of the) velocity, as they are computed in \eqref{eq:64} from the direction-dependent fundamental tensor $g$.

\begin{remark} \label{rem:conformal_lightlikepregeod} {\em Semi-Finsler geodesics not only present a well defined type associated with the constancy of  $L(\dot \gamma)$ (as commented above) but also other relevant semi-Riemannian analogies for lightlike {\em pregeodesics} (i.e., geodesics up to reparametrization).    Lightlike pregeodesics for a semi-Riemannian $g$ are invariant under any conformal change $g \rightarrow g^*:=e^{2u}g$  where $u\in \functionspace(\withboundaryM)$, and its conjugate points and multiplicity remains invariant too \cite[Theorem 2.6]{MS}; these results are valid in any signature, even though Lorentzian signature  interpretations on causality permit some simplifications. All these properties can be extended to two semi-Finsler metrics $L,L^*$ which are {\em anisotropically conformal}, i.e.,  $L^*:=e^{2u}L$ where now $u$ is a (necessarily 0-homogeneous) smooth function on $A$, see \cite{Javaloyes_2020},  \cite[Sect. 3.3 and 6.2]{JS20}.
}
\end{remark}

\begin{definition}\label{d_hess} (1) A vector field $V\in \mathfrak{X}(M)$ is {\em $A$-admissible} if $V_p\in A$ for all $p\in M$

(2) The (Berwald)  Hessian associated with $L$ of a smooth function $\phi \in  \functionspace(\withboundaryM)$
 is the symmetric 2-covariant anisotropic tensor field\footnote{{\em Anisotropic tensor} means that at each (oriented) direction provided by the A-admissible vector field $V$ one has an ordinary tensor. Notice that if $V,\bar V$ are $A$-admissible and $V_p=\bar V_p$  then $\hbox{{\rm Hess}}^V|_p=\hbox{{\rm Hess}}^{\bar V}|_p$  and, if $\lambda>0$, $\hbox{{\rm Hess}}^V=\hbox{{\rm Hess}}^{\lambda V}$. Analogous considerations hold for anisotropic connections (see \eqref{e_chris}, \eqref{e_anisV}).}
$\hbox{{\rm Hess}}$ defined  in coordinates by
\begin{equation}
\label{eq:71}
	\hbox{{\rm Hess}}^V_\phi(Z,W)=
	Z^{i} W^{j}\partial_{i}\partial_{j}\phi -\Gamma_{ij}^{k}(V) Z^{i}W^{j} \partial_{k} \phi.
\end{equation}
for all  $Z,W,V\in \mathfrak{X}(\withboundaryM)$ with $V$ being $A$-admissible and $\Gamma_{ij}^{k}(V)$  computed from \eqref{eq:64} with $V=(x,y)$. 
\end{definition}

\begin{remark}\label{r_nulconvex_depend_geodeic_vf}
{\em (1) It is easy to check the independence of coordinates in \eqref{eq:71}. More intrinsically, the formal Christoffel symbols \eqref{eq:64} provide a (Berwald) affine connection $\nabla^V$ for each A-admissble $V$ (essentially, this is the Berwald  anisotropic connection, see Appendix \ref{A_anisotropic}) and, then, $\hbox{{\rm Hess}}^V$ can be written as
\begin{equation}
\label{eq:71bis}
	\hbox{{\rm Hess}}^V_\phi(Z,W)=(\nablaM^V d\phi)(Z,W)=(\nablaM^V_Z d\phi)(W)-d\phi(\nablaM^V_ZW)=Z(W(\phi))-(\nablaM^V_Z W)(\phi).
\end{equation}
(2) When $\phi$ is a coordinate function, say $\phi=x^{n+1}$, $\hbox{{\rm Hess}}^V$ becomes,  for $V=Z=W$ $A$-admissible:   \begin{equation}
\label{eq:71bis2}\hbox{{\rm Hess}}^V_{x^{n+1}}(V,V)=-\Gamma_{ij}^{n+1}(V)V^iV^j.\end{equation} Considering $\gamma(t)=(\gamma^1(t),\dots,\gamma^{n+1}(t))$ a geodesic with initial velocity $w=V_p$ and recalling \eqref{eq:65}, previous Hessian agrees at $p$ with $\ddot{\gamma}^{n+1}(0)$ and depends only on $w$. Clearly,  this expression, 
to be used later, 
relies only on the associated geodesic 
spray\footnote{Thus, \eqref{eq:71bis2} is equal for both the Berwald and the Levi-Civita-Chern anisotropic connections (see the Appendix~\ref{A_anisotropic})
as   both share $G$.}. 
}
\end{remark}

\subsection{Infinitesimal convexities for the boundary}
\label{sec:infinitesimal}

Next, we will introduce the essential notions of 
infinitesimal light, time and space convexity.  As in the case of  semi-Riemannian manifolds, first  we require a notion of second fundamental form for $\partial M$, which determines how this hypersurface bends. This form will depend on the choice of a rigging vector field, but convexity will be independent of that choice.

\subsubsection{Second fundamental form depending on a rigging vector field $\eta$}
First, let us see that any anisotropic connection (say the Berwald connection, see Remark \ref{r_nulconvex_depend_geodeic_vf}) 
induces one  on its  boundary $\partial M$ after fixing a vector field $\eta$  transverse to $\partial M$. This is a local notion, thus, we will focus on a point $p\in \partial M$ and    an adapted chart $(U,\varphi)$ centered at $p$ as in \S \ref{sec:basic-def} (extended beyond $\partial M$ to $\widehat{M}$).

\begin{definition}
A \emph{rigging vector field} $\eta$ in $U$ is a vector field $\eta:U\rightarrow TU \subset T\withboundaryM$ so that $\eta_q$ is transverse to the boundary and directed inwards for all $q\in \partial U$.
\end{definition}

Let $X,Y \in \mathfrak{X}(\partial U)$ be   two vector fields on the boundary, an extend them to   $\widehat{X},\widehat{Y}\in \mathfrak{X}(U)$ on  $U$. For any $\withboundaryA$-admissible vector field $V\in \mathfrak{X}(U)$, the following  decomposition holds on $\partial U$:
\begin{equation}
\label{eq:1}
\nablaM^{V}_{\widehat{X}}\widehat{Y} = (\nablaM^{V}_{\widehat{X}} \widehat{Y})^{T} + \second{\eta}^{V}(\widehat{X},\widehat{Y})\eta,
\end{equation}
 where $(\nablaM^{V}_{\widehat{X}} \widehat{Y})^{T}\in T\, \partial M$, i.e., we are considering the  decomposition  given by the rigging vector field and the boundary $T \partial M$ of the vector field $\nablaM^V_{\widehat{X}} \widehat{Y}|_{\partial U}$. Then the {\em induced connection} $\nablai{\eta}$ and {\em second fundamental form} $\second{\eta}$  associated with the rigging vector $\eta$ are defined, resp., as:

\begin{equation}
\label{eq:1bis}
\nablai{\eta}^{V}_XY := (\nablaM^{V}_{\widehat{X}}\widehat{Y})^{T}|_{\partial U}, \qquad \qquad \second{\eta}^{V}(X,Y)=\second{\eta}^V(\widehat{X},\widehat{Y})|_{\partial U}. \end{equation}
 Several observations are in order. 
 (a) As standard, these definitions are independent on the chosen extensions  $\widehat{X}$ and $\widehat{Y}$ around each point $p$ and, moreover, the expressions at each $p\in \partial M$ depend on $V_p$ rather  than its extension $V$ (consistently with  \eqref{e_anisV}). 
  (b) Rigging vectors for the hypersurface $\partial M$  can be chosen {\em normal} in  standard Finsler Geometry and, when $\partial M$ is non-degenerate, in  semi-Riemannian Geometry. However, in the semi-Finsler case these directions may not exist and, indeed, the rigging vector can be chosen outside $\overline{A}$.
  (c) 
 Thus,  our  induced anisotropic connection may differ from  the intrinsic anisotropic one which would inherit $\partial M$ in case that the restriction of $L$ is non-degenerate therein.

\begin{proposition}
\label{prop:main2:5}
Let $(\withboundaryM,L,\withboundaryA)$ be a semi-Finsler manifold with a smooth boundary $\partial M$, choose $p\in \partial M$ and consider two rigging vector fields $\eta,\overline{\eta}$, defined in a neighborhood $U$ of $p$. Then, for any  $\withboundaryA$-admissible vector field $V$:
\begin{equation}
\label{eq:3}
\left\{ 
  \begin{array}{l}
    \nablai{\eta}^{V}_{X} Y =  \nablai{\overline{\eta}}^{V}_{X} Y + \second{\overline{\eta}}^{V}(X,Y) \overline{\eta}^{T_\eta}\\
\second{\eta}^{V}(X,Y)= B \second{\overline{\eta}}^{V}(X,Y),
  \end{array}
\right.\quad \hbox{for all $X,Y\in \mathfrak{X}(\partial M)$.}
\end{equation}
where $\overline{\eta}$ is uniquely decomposed as $\overline{\eta}=\overline{\eta}^{T_\eta}+B\eta$, with the vector field $\overline{\eta}^{T_\eta}$  on $\partial U$ tangent to $\partial U \subset \partial M$ and $B$  a  function on $\partial U$ (satisfying $B\geq 0$, as both riggings point inwards). 
\end{proposition}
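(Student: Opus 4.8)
The plan is to exploit that the left-hand side of the defining decomposition \eqref{eq:1}, the ambient covariant derivative $\nablaM^V_{\widehat X}\widehat Y$, does not depend on the rigging at all: it is determined solely by the anisotropic connection $\nablaM^V$ attached to the $A$-admissible field $V$. Only its splitting into a tangential and a transverse part depends on the choice of $\eta$. Hence the entire content of the proposition is a comparison of two direct-sum decompositions of one and the same vector.

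Concretely, I would first write \eqref{eq:1} (restricted to $\partial U$ via \eqref{eq:1bis}) for each rigging. For $\eta$ this reads $\nablaM^V_{\widehat X}\widehat Y = \nablai{\eta}^V_X Y + \second{\eta}^V(X,Y)\,\eta$, and the analogous identity with $\overline\eta$ in place of $\eta$ holds as well. I would then insert the hypothesized decomposition $\overline\eta=\overline\eta^{T_\eta}+B\eta$ into the $\overline\eta$-identity and regroup: the summand $\second{\overline\eta}^V(X,Y)\,\overline\eta^{T_\eta}$ merges with the manifestly $T\partial M$-valued term $\nablai{\overline\eta}^V_X Y$, while $B\,\second{\overline\eta}^V(X,Y)$ becomes the coefficient multiplying $\eta$. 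Since $\eta$ is transverse, $T_q M=T_q\partial M\oplus\R\,\eta_q$ is a genuine direct sum for every $q\in\partial U$, so the $\eta$-decomposition of $\nablaM^V_{\widehat X}\widehat Y$ is unique; matching it against the $\eta$-identity forces both equalities in \eqref{eq:3} at once.

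It then remains to justify $B\geq 0$, which I would read off from an adapted chart. There $\partial M=\{x^{n+1}=0\}$, so $T\partial M=\mathrm{span}(\partial_1,\dots,\partial_n)$ and transversality is detected by the $\partial_{n+1}$-component. As $\overline\eta^{T_\eta}$ is tangent to $\partial M$ its $(n+1)$-component vanishes, so comparing $(n+1)$-components in $\overline\eta=\overline\eta^{T_\eta}+B\eta$ yields $\overline\eta^{\,n+1}=B\,\eta^{n+1}$; both riggings pointing inwards means $\eta^{n+1},\overline\eta^{\,n+1}>0$, whence in fact $B>0$.

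The argument is essentially linear algebra, so I do not expect a real obstacle; the one point to verify carefully is precisely the claim underlying the whole reduction—that $\nablaM^V_{\widehat X}\widehat Y$ is rigging-independent and that, as recorded among the observations following \eqref{eq:1bis}, its restriction to $\partial U$ depends only on $V$ along $\partial M$ (and not on a chosen extension), so that the two decompositions being compared are genuinely of the same vector field on $\partial U$.
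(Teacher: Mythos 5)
Your argument is exactly the paper's: substitute $\overline\eta=\overline\eta^{T_\eta}+B\eta$ into the $\overline\eta$-decomposition of the rigging-independent vector $\nablaM^V_{\widehat X}\widehat Y$, regroup, and invoke uniqueness of the splitting $T_qM=T_q\partial M\oplus\R\,\eta_q$ (the paper's displayed computation \eqref{eq:2}). The only addition is your explicit chart verification that $B>0$, which the paper merely asserts parenthetically; this is a correct and harmless elaboration.
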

\begin{proof} Just compare \eqref{eq:1}, \eqref{eq:1bis} with
\begin{equation}
	\label{eq:2}
	\begin{split}
		\nablaM^{V}_X Y =& \nablai{\overline{\eta}}^{V}_{X} Y + \second{\overline{\eta}}^{V}(X,Y) \overline{\eta} = \nablai{\overline{\eta}}^{V}_{X} Y + \second{\overline{\eta}}^{V}(X,Y)\left(\overline{\eta}^{T_{\eta}}+B \eta  \right)\\ =&  \left( \nablai{\overline{\eta}}^{V}_{X} Y + \second{\overline{\eta}}^{V}(X,Y) \overline{\eta}^{T_{\eta}}\right) +B\; \second{\overline{\eta}}^{V}(X,Y) \eta .
	\end{split}
\end{equation} 

\end{proof}

\subsubsection{Definition and characterization of infinitesimal convexities}

From Proposition \ref{prop:main2:5}, even if the notion of the second fundamental form depends on the rigging vector field $\eta$, the \emph{sign} of such a fundamental form remains invariant under changes of $\eta$. We can then introduce the following notion of convexity, which have no dependence on the choice for the rigging vector field (so, $\eta$ will be removed from the notation once there is no ambiguity).
\begin{definition}
\label{def:main2:12}
Let $(\withboundaryM,L,{\withboundaryA})$ be a semi-Finsler manifold with smooth boundary $\partial M$. A boundary point $p\in \partial M$ is {\em infinitesimal  lightconvex (resp. timeconvex,  spaceconvex) at $p$} if for any $A$-admissible lightlike (resp. timelike, spacelike) vector $w\in T_p\partial M\cap \bar A_p$, we have $II^{w}(w,w)\geq 0$,  where $II^w$ denotes  the second fundamental form with respect to one, and then all, (inwards-directed) rigging vector field $\eta$. 
If this property holds at each $p\in \partial M$, then $\partial M$ is {\em infinitesimal lightconvex (resp. timeconvex, spaceconvex)}. 
\end{definition}

Infinitesimal convexity can be easily characterized in terms of the Hessian of the coordinate defining the boundary in any adapted chart (recall Definition \ref{d_hess} and Remark \ref{r_nulconvex_depend_geodeic_vf}).

\begin{proposition}
$\partial M$ is infinitesimally lightconvex (resp. timeconvex, spaceconvex) at $p$ if and only if for one (and then any) adapted chart $(U,\varphi=(x^1,\dots,x^{n+1}))$ centered at $p$,
$$Hess^{w}_{x^{n+1}}(w,w)\leq 0 \quad \hbox{for any lightlike (resp. timelike, spacelike)} \; w\in T_p\partial M. $$ 
\end{proposition}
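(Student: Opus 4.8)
The plan is to reduce both sides of the asserted equivalence to the single scalar given by the $(n+1)$-component of the covariant acceleration, and to read off the sign relation directly. Fix the adapted chart $(U,\varphi)$ and take as rigging vector field the coordinate field $\eta=\partial/\partial x^{n+1}$; since $x^{n+1}\geq 0$ on $M$ and vanishes exactly on $\partial U$, this $\eta$ is transverse to $\partial M$ and directed inwards, so it is an admissible inwards rigging vector field. Because $T_p\partial M=\{X^{n+1}=0\}$ in these coordinates while $\eta$ has $(n+1)$-component equal to $1$, the decomposition \eqref{eq:1} shows that $\second{\eta}^{V}(\widehat X,\widehat Y)$ is precisely the $(n+1)$-component of $\nablaM^{V}_{\widehat X}\widehat Y$ on $\partial U$.

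First I would evaluate the Hessian of $\phi=x^{n+1}$ using the intrinsic expression \eqref{eq:71bis}, namely $\mathrm{Hess}^{V}_{\phi}(Z,W)=Z(W(\phi))-(\nablaM^{V}_{Z}W)(\phi)$. For $X,Y\in\mathfrak{X}(\partial M)$ extended to $\widehat X,\widehat Y\in\mathfrak{X}(U)$, the term $\widehat X(\widehat Y(x^{n+1}))$ vanishes on $\partial U$: indeed $\widehat Y(x^{n+1})=\widehat Y^{n+1}$ is identically zero along $\partial M$ (as $\widehat Y$ restricts there to a boundary-tangent field), and $\widehat X$ differentiates it in a direction tangent to $\partial M$. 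Hence, on $\partial U$,
\begin{equation*}
\mathrm{Hess}^{V}_{x^{n+1}}(X,Y)=-(\nablaM^{V}_{\widehat X}\widehat Y)(x^{n+1})=-(\nablaM^{V}_{\widehat X}\widehat Y)^{n+1}=-\second{\eta}^{V}(X,Y),
\end{equation*}
the last equality being the identification noted above. Specializing to $V=X=Y=w$ with $w\in T_p\partial M\cap\bar A_p$ of the prescribed causal type gives $\mathrm{Hess}^{w}_{x^{n+1}}(w,w)=-II^{w}(w,w)$; equivalently this reproduces the direct formula \eqref{eq:71bis2}, since $II^{w}(w,w)=\Gamma^{n+1}_{ij}(w)w^{i}w^{j}$. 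Thus $II^{w}(w,w)\geq 0$ for all such $w$ if and only if $\mathrm{Hess}^{w}_{x^{n+1}}(w,w)\leq 0$ for all such $w$, which is the claimed equivalence.

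For the passage from one chart to any chart, I would invoke that the sign of the second fundamental form is independent of the rigging (Proposition \ref{prop:main2:5}) and that the right-hand side of \eqref{eq:71} is coordinate-independent (Remark \ref{r_nulconvex_depend_geodeic_vf}(1)). Since the displayed identity $\mathrm{Hess}^{w}_{x^{n+1}}(w,w)=-II^{w}(w,w)$ was derived in an arbitrary adapted chart and its right-hand side is the intrinsic quantity of Definition \ref{def:main2:12}, the Hessian condition in one adapted chart is equivalent to infinitesimal convexity, hence to the Hessian condition in every adapted chart. The step I expect to be the main (if minor) obstacle is the boundary-tangency argument that kills $\widehat X(\widehat Y(x^{n+1}))$: one must verify that this first-derivative term genuinely vanishes on $\partial U$ for every admissible extension, which rests on $\widehat Y^{n+1}\equiv 0$ along $\partial M$ together with $\widehat X$ being tangent there, and on confirming that the tangential part in \eqref{eq:1} has vanishing $(n+1)$-coordinate in the adapted chart so that \eqref{eq:1} really isolates the second fundamental form.
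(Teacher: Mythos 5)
Your proposal is correct and follows essentially the same route as the paper: both evaluate $\mathrm{Hess}^{V}_{x^{n+1}}$ via \eqref{eq:71bis}, kill the first-derivative term by boundary tangency, and apply $dx^{n+1}$ to the decomposition \eqref{eq:1} to obtain $\mathrm{Hess}^{w}_{x^{n+1}}(w,w)=-\second{\eta}^{w}(w,w)\,dx^{n+1}(\eta)$, the only cosmetic difference being that you fix $\eta=\partial/\partial x^{n+1}$ (so $dx^{n+1}(\eta)=1$) whereas the paper keeps a general inwards rigging and only uses $dx^{n+1}(\eta)>0$.
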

\begin{proof}
For any $A$-admissible  $V$ and any vector field $W$ 
on $U$  tangent to $\partial M$ in $\partial U$, using \eqref{eq:71bis},
\begin{equation}
\label{eq:4}
\begin{array}{rl}
\hbox{{\rm   Hess}}^{V}_{x^{n+1}}(W,W)
^V_{Z}(dx^{n+1})W 
=& W(W(x^{{n+1}}))-dx^{n+1}({\nablaM}^{V}_W W)\\ =& -dx^{n+1}(\nablai{\eta}^{V}_W W + \second{\eta}^V(W,W)\eta)\\
  =&-\second{\eta}^V(W,W)dx^{n+1}(\eta).
\end{array}
\end{equation}
Recall $dx^{n+1}(\eta)>0$ (as $\eta$ is inner-directed) and choose $V=W$,  
an $A$-admissible vector field extending $w$.
\end{proof}


The invariance of null convexity under conformal changes will be a key property for causality, as this means that it depends only on the lightlike cone structure. 
\begin{proposition}\label{p_invar_conf_lightconvex}
	Infinitesimal lightconvexity  and strong\footnote{The name  {\em strict} (local) convexity is commonly referred to the case when, up to the initial point,  $\gamma$ must lie in $\widehat{M}\setminus M$. For this reason, {\em strong} convexity refers here to the case when $II$ is positive definite.} infinitesimal lightconvexity  are invariant under anisotropical conformal changes of the metric.
\end{proposition}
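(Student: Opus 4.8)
The plan is to reduce everything to the Hessian characterization of infinitesimal lightconvexity established just above, and to show that under an anisotropic conformal change $L^* = e^{2u}L$ (with $u$ a $0$-homogeneous smooth function on $A$) the relevant quantity $\mathrm{Hess}^w_{x^{n+1}}(w,w)$ transforms, on lightlike directions $w$, by multiplication by a positive factor. Since infinitesimal lightconvexity at $p$ is equivalent to $\mathrm{Hess}^w_{x^{n+1}}(w,w)\le 0$ for all lightlike $w\in T_p\partial M$, and strong lightconvexity to the corresponding strict inequality, the invariance of the \emph{sign} (and of strictness) is exactly what must be proved. The key structural fact I would invoke, from Remark \ref{rem:conformal_lightlikepregeod}, is that lightlike pregeodesics are preserved under anisotropic conformal changes; equivalently, the geodesic sprays $G$ and $G^*$ agree \emph{up to reparametrization on the cone} $\lc{}$. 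This is precisely the input that makes the Hessian of the boundary coordinate behave well, since by \eqref{eq:71bis2} that Hessian is computed purely from the spray.

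The central computation I would carry out is the relation between the formal Christoffel symbols of $L$ and $L^*$ restricted to a lightlike direction. For a semi-Riemannian conformal change one has the classical formula relating $\Gamma^k_{ij}$ and $\Gamma^{*k}_{ij}$ by terms involving $\partial_i u$; the anisotropic version is obtained identically from \eqref{eq:64} because $g^*_{ij} = e^{2u} g_{ij}$ with $u$ $0$-homogeneous, so the derivation mimics the semi-Riemannian one direction-by-direction. Plugging this into \eqref{eq:71bis2} for a lightlike $w=V_p=(x,y)$ with $L(w)=0$, the extra terms organize into a contraction of the form $\big(\text{(derivative of }u)\,w^k - g^{kl}(\partial_l u)\,L(w)\big)$; the crucial observation is that on the cone the $L(w)$-term drops, leaving
\begin{equation}
\label{eq:conf-hess}
\mathrm{Hess}^{*\,w}_{x^{n+1}}(w,w) = \mathrm{Hess}^{w}_{x^{n+1}}(w,w) - 2\,\big(w(u)\big)\,w^{n+1},
\end{equation}
where $w(u)$ is the directional derivative. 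Restricting now to $w\in T_p\partial M$ we have $w^{n+1}=0$ in the adapted chart, so the correction term vanishes and $\mathrm{Hess}^{*\,w}_{x^{n+1}}(w,w)=\mathrm{Hess}^{w}_{x^{n+1}}(w,w)$ exactly. Hence the sign, and strictness, are preserved verbatim, giving both the lightconvex and the strong lightconvex assertions.

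The main obstacle I expect is bookkeeping rather than conceptual: one must be careful that $u$ is $0$-homogeneous so that its derivatives interact correctly with the homogeneity of $g_{ij}$ and of $\Gamma^k_{ij}$, and that the direction-dependence of the anisotropic Christoffel symbols does not introduce spurious $\partial_y u$ terms when everything is evaluated at the fixed direction $w$. The clean way to sidestep the explicit Christoffel manipulation is to argue intrinsically: because $w\in T_p\partial M$ is tangent to the boundary, the value $\mathrm{Hess}^w_{x^{n+1}}(w,w)$ depends, via \eqref{eq:71bis2}, only on $\ddot\gamma^{n+1}(0)$ for the \emph{lightlike} geodesic $\gamma$ of $L$ with $\dot\gamma(0)=w$; since lightlike pregeodesics are shared by $L$ and $L^*$ and differ only by reparametrization, and since $\gamma(0)=p\in\partial M$ with $\dot\gamma(0)\in T_p\partial M$, the normal component $\ddot\gamma^{n+1}(0)$ is unchanged up to the positive reparametrization factor. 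This reparametrization-invariance argument is the cleanest route and I would present it as the main line, using the explicit formula \eqref{eq:conf-hess} only as a confirming computation.
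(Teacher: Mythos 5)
Your main line of argument --- identifying $\mathrm{Hess}^w_{x^{n+1}}(w,w)$ with $\ddot\gamma^{n+1}(0)$ via \eqref{eq:71bis2}, noting that its sign is unchanged under reparametrization (since $\dot\gamma^{n+1}(0)=0$), and invoking the conformal invariance of lightlike pregeodesics from Remark \ref{rem:conformal_lightlikepregeod} --- is exactly the paper's proof. The supplementary Christoffel-symbol computation, where the $L(w)$-term drops on the cone and the remaining correction is proportional to $w^{n+1}=0$, is also correct and consistent with \eqref{eq:64} and \eqref{eq:71bis2}.
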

\begin{proof}
From Remark \ref{r_nulconvex_depend_geodeic_vf} and formula \eqref{eq:71bis2}, Hess$^{w}_{x^{n+1}}(w,w)$ can be regarded as $\ddot{\gamma}^{n+1}(0)$ for a lightlike geodesic $\gamma$. As the sign of $\ddot{\gamma}^{n+1}(0)$ is invariant for reparameterizations of $\gamma$, the result follows from the invariance of lightlike pregeodesics under anisotropic conformal changes (Remark \ref{rem:conformal_lightlikepregeod}).
\end{proof}

\subsection{Equivalence  of infinitesimal and local convexities}

In the Riemannian or Finslerian setting, a commonly used notion of convexity for the hypersurface $\partial M$ in $\widehat M$ is {\em (local) convexity at $p$}, which means that no geodesic $\gamma$ tangent to $\partial M$ at $p$ enters the interior $\mathring{M}$ in a neighborhood of $p$ (see, e.g., \cite{BGS}). When this property holds for every $p \in \partial M$, we say that $\partial M$ is {\em locally convex}. This can be extended directly to the  semi-Finslerian setting.


\begin{definition}
Let $(M,L,A)$ be a semi-Finsler manifold with boundary.
$\partial M$ is {\em (locally) lightconvex (resp. timeconvex, spaceconvex)} if  any {\em lightlike} (resp.  {\em timelike, spacelike}) geodesic $\gamma$ in $M$ with initial velocity tangent to $\partial M$ 
must be contained\footnote{\label{f_localconvexity} In principle,  this property should  be imposed only for some neighborhood around the initial point $p\in \partial M$. However, the global statement follows applying this property to the first  point where $\gamma$  would leave $\partial M$.} in $\partial M$.
\end{definition} 
\begin{proposition}\label{prop_strictconv} If a vector $w\in A_p \cap T_pM$ satisfies $II^w(w,w)<0$ then the geodesic $\gamma$ with initial velocity $w$ is not contained in $\partial M$, moreover, at least locally, the only intersection between $\gamma$ and $\partial M$ is the point $p$.

Thus, (for geodesics of each  type) local convexity implies infinitesimal convexity .
\end{proposition}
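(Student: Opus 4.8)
The plan is to show the contrapositive-style contribution directly: assume $II^w(w,w)<0$ for some $A$-admissible vector $w\in T_p\partial M$ and deduce that the geodesic $\gamma$ with $\gamma(0)=p$, $\dot\gamma(0)=w$ leaves the boundary immediately, and in fact only meets $\partial M$ at $p$ locally. The natural device is to track the value of the boundary-defining coordinate $x^{n+1}$ along $\gamma$ in an adapted chart $(U,\varphi)$ centered at $p$, since by construction $x^{n+1}\geq 0$ on $M$ and $x^{n+1}(q)=0$ exactly when $q\in\partial M$. So I would set $h(t):=x^{n+1}(\gamma(t))$ and study its Taylor expansion at $t=0$.

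The key computation is the second derivative $\ddot h(0)$. First I would note $h(0)=x^{n+1}(p)=0$ since $p\in\partial M$, and $\dot h(0)=dx^{n+1}(w)=0$ because $w=\dot\gamma(0)\in T_p\partial M$ is tangent to the boundary, so $w^{n+1}=0$. For the second derivative, by the geodesic equation \eqref{eq:65} applied to the $(n+1)$-component, $\ddot\gamma^{n+1}(0)=-\Gamma^{n+1}_{ij}(\gamma(0),\dot\gamma(0))\,w^iw^j$. By Remark~\ref{r_nulconvex_depend_geodeic_vf}(2), precisely formula \eqref{eq:71bis2}, this equals $\hbox{{\rm Hess}}^w_{x^{n+1}}(w,w)$, and by the characterization in the preceding Proposition this Hessian is $-\second{\eta}^w(w,w)\,dx^{n+1}(\eta)$. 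Since $\eta$ is inward-directed, $dx^{n+1}(\eta)>0$, so the sign of $\ddot h(0)=\ddot\gamma^{n+1}(0)$ is opposite to the sign of $II^w(w,w)$. Thus the hypothesis $II^w(w,w)<0$ forces $\ddot h(0)>0$.

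Now the conclusion follows from elementary one-variable analysis: $h$ is a smooth real function with $h(0)=0$, $\dot h(0)=0$ and $\ddot h(0)>0$, so $t=0$ is a strict local minimum of $h$ with $h(t)>0$ for all $t\neq 0$ in a punctured neighborhood of $0$. Since $h(t)>0$ means $\gamma(t)\in\mathring M$ (the interior) and $h(t)=0$ means $\gamma(t)\in\partial M$, this says $\gamma$ enters the interior on both sides of $p$ and meets $\partial M$ only at $p$ locally; in particular $\gamma$ is not contained in $\partial M$. The final sentence of the statement is then an immediate logical transposition: if $\partial M$ is locally convex (no tangent geodesic of the given type enters $\mathring M$), then no $A$-admissible $w$ of that type can satisfy $II^w(w,w)<0$, i.e.\ $II^w(w,w)\geq 0$ for all such $w$, which is exactly infinitesimal convexity.

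I do not expect a serious obstacle here, as this is the ``easy'' half of the equivalence in Theorem~\ref{t1} --- the strict second-order condition propagates to a genuine separation of $\gamma$ from $\partial M$ by a clean Taylor argument. The one point requiring a little care is the coordinate-independence: the statement ``$\gamma$ only meets $\partial M$ at $p$ locally'' must not depend on the chosen adapted chart, but this is guaranteed because membership in $\partial M$ is the chart-independent condition $h(t)=0$, and the sign of $II^w(w,w)$ was already shown (Proposition~\ref{prop:main2:5}) to be rigging-independent, hence intrinsic. The harder converse direction --- that $II^w(w,w)\geq 0$ everywhere forces tangent geodesics to stay in $\partial M$, where the non-strict inequality no longer yields an immediate Taylor obstruction and a limiting or continuity argument is needed --- is handled separately in Theorems~\ref{prop:main2:6} and~\ref{prop:main2:4}, and is the genuinely delicate part of the full equivalence.
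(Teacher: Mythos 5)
Your proposal is correct and follows essentially the same route as the paper: work in an adapted chart, identify $\ddot\gamma^{n+1}(0)$ with $\mathrm{Hess}^{w}_{x^{n+1}}(w,w)=-\second{\eta}^{w}(w,w)\,dx^{n+1}(\eta)>0$ via \eqref{eq:4} and \eqref{eq:71bis2}, and conclude by the second-order Taylor expansion that $x^{n+1}\circ\gamma$ is strictly positive near $0$ except at $t=0$. Your closing remarks on rigging- and chart-independence and on the contrapositive for the last sentence are consistent with what the paper leaves implicit.
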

\begin{proof} Let $({U},\varphi=(x^1,\dots,x^{n+1}))$ be an adapted chart and $W$ be a vector extending locally in $\widehat{M}$ the velocity of $\gamma$. By \eqref{eq:4}, the function $\gamma^{n+1}(t)=x^{n+1}(\gamma(t))$ satisfies 
  \begin{equation}
\label{eq:10}
          \gamma^{n+1}(0)=0, \quad
           \dot{\gamma}^{n+1} (0) = 0, \quad 
          \ddot{\gamma}^{n+1}(0)=Hess^{\dot{\gamma}}_{x^{n+1}}(\dot{\gamma}(0),\dot{\gamma}(0))>0.
\end{equation}
Then $\gamma^{n+1}(t)>0$ for all $t\in (-\epsilon,\epsilon)$ but $0$, and all the conclusions follow.
\end{proof}
The converse would be trivial from the previous proof under  strong convexity $\second{}^w(w,w)>0$. However, the proof in \cite{Bartolo_2010} holds for the standard Finsler case, yielding even the following slightly more general result.  

\begin{theorem}
\label{prop:main2:6}
Let $(\withboundaryM,L,A)$ be a semi-Finslerian manifold with  boundary, and consider a geodesic $\gamma:[0,b]\rightarrow \withboundaryM$  with $\gamma(0)\in \partial M$ and $\dot{\gamma}(0)$ tangent to $\partial M$. Assume that there exists an open subset $\mathcal{U}$ of $ A \cap T(\partial M)$ with $\dot{\gamma}(0)\in \mathcal{U}$ and satisfying that, for any $w\in \mathcal{U}$,  $II^w(w,w)\geq 0$. Then, $\gamma|_{[0,\epsilon)}\subset \partial M$ for some $\epsilon>0$.
\end{theorem}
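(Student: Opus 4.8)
The plan is to follow the analytic strategy of \cite{Bartolo_2010}, tracking the height of $\gamma$ over the boundary through the scalar function $f(t):=x^{n+1}(\gamma(t))=\gamma^{n+1}(t)$ in an adapted chart $(U,\varphi)$ centered at $\gamma(0)$. Since $\gamma$ takes values in $\withboundaryM$, we have $f\geq 0$; the initial conditions give $f(0)=0$ (as $\gamma(0)\in\partial M$) and $\dot f(0)=0$ (as $\dot\gamma(0)\in T_{\gamma(0)}\partial M$). Because $\dot\gamma(0)\in A$ and $A$ is open, $\dot\gamma(t)\in A$ for small $t$, so the geodesic equation \eqref{eq:65} together with \eqref{eq:71bis2} yields, for such $t$,
\[
\ddot f(t)=-\Gamma^{n+1}_{ij}(\gamma(t),\dot\gamma(t))\dot\gamma^i(t)\dot\gamma^j(t)=\mathrm{Hess}^{\dot\gamma(t)}_{x^{n+1}}(\dot\gamma(t),\dot\gamma(t)).
\]
Under \emph{strong} convexity the converse direction of Proposition \ref{prop_strictconv} would finish immediately, since $\ddot f(0)<0$ contradicts $f\geq 0$ unless $\gamma$ stays on $\partial M$; the real difficulty is precisely the borderline degenerate case $II^{\dot\gamma(0)}(\dot\gamma(0),\dot\gamma(0))=0$, where $\ddot f(0)=0$ carries no first-order sign information and a higher-order, integral argument is unavoidable.

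To control $\ddot f(t)$ for $t>0$ I would compare the geodesic data with genuinely tangent boundary data. Decompose $\dot\gamma(t)=w(t)+\dot f(t)\,\partial_{n+1}$, where $w(t):=\sum_{i\leq n}\dot\gamma^i(t)\partial_i$ is tangent to the level sets of $x^{n+1}$, and let $\gamma_0(t)$ be the projection of $\gamma(t)$ to $\partial U$ (setting $x^{n+1}=0$). At $t=0$ one has $(\gamma_0(0),w(0))=\dot\gamma(0)\in\mathcal U$; since $\mathcal U$ is open in $A\cap T(\partial M)$, for $t$ small the pair $(\gamma_0(t),w(t))$ still lies in $\mathcal U$, so the hypothesis and \eqref{eq:4} give $\mathrm{Hess}^{w(t)}_{x^{n+1}}(w(t),w(t))=-II^{w(t)}(w(t),w(t))\,dx^{n+1}(\eta)\leq 0$. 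The map $(q,v)\mapsto -\Gamma^{n+1}_{ij}(q,v)v^iv^j$ is Lipschitz near $(\gamma(0),\dot\gamma(0))$ on the region where $v$ stays away from $0$ (only the $C^{1,1}$ regularity of $g$ is needed here, which is what keeps the statement sharp), and the two pairs $(\gamma(t),\dot\gamma(t))$ and $(\gamma_0(t),w(t))$ differ by $f(t)$ in the base and by $\dot f(t)$ in the fibre. Hence
\[
\ddot f(t)\leq \mathrm{Hess}^{w(t)}_{x^{n+1}}(w(t),w(t))+C\big(|f(t)|+|\dot f(t)|\big)\leq C\big(f(t)+|\dot f(t)|\big),
\]
using $f\geq 0$ in the last step.

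It then remains a purely one-dimensional matter: show that $f\geq 0$, $f(0)=\dot f(0)=0$ and the differential inequality $\ddot f\leq C(f+|\dot f|)$ force $f\equiv 0$ on a right neighborhood of $0$. I would argue by contradiction: if $f(t_1)>0$, let $a=\sup\{t\leq t_1:f(t)=0\}$, so $f(a)=\dot f(a)=0$ (using $f\geq 0$ at the interior minimum, or the initial data if $a=0$) and $f>0$ on $(a,t_1]$. Setting $E:=\sqrt{f^2+\dot f^2}$, which is Lipschitz with $E(a)=0$, the inequality gives $E'\leq (1+2C)E$ almost everywhere; Gronwall then forces $E\equiv 0$ on $[a,t_1]$, contradicting $f(t_1)>0$. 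Thus $f\equiv 0$ near $0$, i.e.\ $\gamma|_{[0,\epsilon)}\subset\partial M$. The main obstacle throughout is the degenerate case $II=0$; the open set $\mathcal U$ is exactly what lets the convexity inequality survive under the small perturbation of base point and direction incurred by passing from $(\gamma(t),\dot\gamma(t))$ to the tangent boundary data $(\gamma_0(t),w(t))$.
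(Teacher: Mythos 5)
Your overall route coincides with the paper's: the proof of Theorem \ref{prop:main2:6} in the text simply observes that the analytic argument of \cite{Bartolo_2010} (a projection transverse to the boundary plus a local reasoning in $TM$ around $\dot\gamma(0)$) carries over verbatim, and your proposal is precisely a reconstruction of that argument. The geometric part is sound: $f\ge 0$ with $f(0)=\dot f(0)=0$, the identity $\ddot f(t)=-\Gamma^{n+1}_{ij}(\gamma,\dot\gamma)\dot\gamma^i\dot\gamma^j$, the comparison with the projected data $(\gamma_0(t),w(t))\in\mathcal U$, and the Lipschitz estimate (exactly where the $C^{1,1}$ regularity of $g$ enters) correctly yield the one-sided differential inequality $\ddot f\le C\bigl(f+|\dot f|\bigr)$ on a small interval.

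The gap is in the final Gronwall step. For $E=\sqrt{f^2+\dot f^2}$ one has $E'=(f\dot f+\dot f\ddot f)/E$, and the term $\dot f\ddot f$ is controlled by a multiple of $E^2$ only where $\dot f\ge 0$: where $\dot f<0$, an upper bound on $\dot f\ddot f$ requires a \emph{lower} bound on $\ddot f$ of the same form, which is not available. The hypothesis $II^w(w,w)\ge 0$ is one-sided, so $\mathrm{Hess}^{w(t)}_{x^{n+1}}(w(t),w(t))$ is only known to be $\le 0$; it does vanish at $t=0$ (since $f\ge 0$ forces $\ddot f(0)\ge 0$), but being merely Lipschitz it may be of order $-t$ for $t>0$, and feeding $\ddot f\ge -Lt-C(f+|\dot f|)$ into Gronwall only yields $E(t)=O(t^2)$, not $E\equiv 0$. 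So the claimed $E'\le(1+2C)E$ does not follow and the argument as written does not close. The standard repair (in essence what \cite{Bartolo_2010} does) is to use only the positive part of $\dot f$: set $\varphi:=\max(\dot f,0)$ and $\Phi(t):=\int_0^t\varphi$, so that $0\le f\le\Phi$. Where $\dot f>0$ one has $|\dot f|=\varphi$ and hence $\varphi'=\ddot f\le C(f+\varphi)\le C(\Phi+\varphi)$; where $\dot f\le 0$ one has $\varphi=0$ and $\varphi'=0$ at points of differentiability, so the same inequality holds a.e. Gronwall applied to the Lipschitz function $\Phi+\varphi$, which vanishes at $t=0$, gives $\Phi\equiv 0$, hence $\dot f\le 0$, hence $f\le f(0)=0$, hence $f\equiv 0$. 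With this replacement your proof is complete and no contradiction setup is even needed.
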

\begin{proof}
    Notice that the mentioned proof in \cite{Bartolo_2010} used only: (a) a local reasoning in $TM$ around the initial velocity of the geodesic (and convexity for the vectors therein therein), and (b)  a projection in a direction transverse to the boundary (which may be unrelated to the Finsler metric). So, it works directly in our case.
\end{proof}

This  applies to the infinitesimal convexity of the following type of vectors in $T \partial M$ (as they yield an   open  subset) thus implying the equivalence for the corresponding type of local convexity: (a)  all the vectors, i.e., $A\cap T\, \partial M$ (b) timelike vectors, (c) spacelike vectors. 

For (non-strong) infinitesimal  lightconvexity, the  equivalence  was achieved  by Hintz and Uhlmann \cite{Hintz_2017} 
for  Lorentz manifolds with timelike boundary. The proof can be extended  to the (indefinite) semi-Finsler case using the following notion. 

\begin{definition}\label{d_no_degenerate}
    An indefinite semi-Finsler manifold $(\withboundaryM,L,{\withboundaryA})$ has 
    {\em non-degenerate boundary} if the lightcone 
    $\lc{p}$ intersects transversely 
    $T_p \partial M$, that is, if the kernel of $g_w(w, \cdot)$ is not equal to $T_p \partial M$, at any lightlike direction $w\in T_p \partial M$.
\end{definition}


\begin{theorem}
\label{prop:main2:4}
Let $(\withboundaryM,L,{\withboundaryA})$ be a semi-Finsler manifold with indefinite non-degenerate smooth boundary. If $\partial M$ is infinitesimally lightconvex, then any lightlike geodesic $\gamma:[0,b]\rightarrow \withboundaryM$ with $\gamma(0)=p\in \partial M $ and $\dot{\gamma}(0)\in T_p\partial M$ is contained in $\partial M$.
\end{theorem}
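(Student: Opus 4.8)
The plan is to reduce the statement to a one–dimensional differential inequality for the boundary–defining coordinate evaluated along $\gamma$, and then to close the argument with a Glaeser–type ODE estimate. Fix $p=\gamma(0)$ and an adapted chart $(U,\varphi=(x^1,\dots,x^{n+1}))$ centered at $p$, so that $M=\{x^{n+1}\ge 0\}$ locally and $\partial M=\{x^{n+1}=0\}$. Set $f(t):=x^{n+1}(\gamma(t))$. Since $\gamma$ lies in $M$ we have $f\ge 0$, and $f(0)=0$, while $\dot f(0)=\dot\gamma^{n+1}(0)=0$ because $\dot\gamma(0)\in T_p\partial M$. By \eqref{eq:71bis2} and Remark~\ref{r_nulconvex_depend_geodeic_vf}(2), along the geodesic one has $\ddot f(t)=\mathrm{Hess}^{\dot\gamma(t)}_{x^{n+1}}(\dot\gamma(t),\dot\gamma(t))=-\Gamma^{n+1}_{ij}(\gamma,\dot\gamma)\dot\gamma^i\dot\gamma^j=:H(\gamma(t),\dot\gamma(t))$, a quantity depending only on the geodesic spray. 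It therefore suffices to prove that $f\equiv 0$ on a right neighbourhood of $0$; the global conclusion then follows by applying this to the last parameter at which $\gamma$ still meets $\partial M$, as in footnote~\ref{f_localconvexity}.

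Next I would establish the key differential inequality $\ddot f\le C(f+|\dot f|)$ near $t=0$. The idea is to compare $(\gamma(t),\dot\gamma(t))$, which lies in the null bundle $\{L=0\}$ but in general off the boundary and off $T\partial M$, with a nearby configuration $(\tilde x,\tilde y)$ that is a lightlike vector tangent to $\partial M$ at a boundary point; infinitesimal lightconvexity gives $H(\tilde x,\tilde y)=\mathrm{Hess}^{\tilde y}_{x^{n+1}}(\tilde y,\tilde y)\le 0$ there. Projecting $x=\gamma(t)$ to $\tilde x\in\{x^{n+1}=0\}$ costs $|x-\tilde x|=f$. To produce $\tilde y$ one removes the transverse component (of size $\dot f=\dot\gamma^{n+1}$) and then restores the null condition by a tangential correction: this is exactly where Definition~\ref{d_no_degenerate} enters, since non-degeneracy provides $u\in T\partial M$ with $g_w(w,u)\ne 0$ (where $w$ is the limiting tangent null direction), so that $c\mapsto L(\dot\gamma - \dot f\,\partial_{n+1}+c\,u)$ has non-vanishing derivative and can be solved for a correction of size $O(|\dot f|)$. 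Hence $|y-\tilde y|=O(|\dot f|)$. Since $g$ is $C^{1,1}$ (Remark~\ref{rem:main2:1}) the formal Christoffel symbols are Lipschitz and $H$ is Lipschitz on the relevant compact set, so $\ddot f=H(x,y)\le H(\tilde x,\tilde y)+C(|x-\tilde x|+|y-\tilde y|)\le C(f+|\dot f|)$.

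Finally I would close with the ODE argument. We have $f\ge 0$, $f(0)=\dot f(0)=0$, $\ddot f$ bounded, and the one-sided bound $\ddot f\le C(f+|\dot f|)$. Write $m(\delta)=\sup_{[0,\delta]}f$ and $R(\delta)=\sup_{[0,\delta]}|\dot f|$. Glaeser's inequality for nonnegative functions (a nonnegative $C^{1,1}$ function with $\ddot f\le M$ satisfies $|\dot f|^2\le 2Mf$), applied with the shrinking local bound $M(\delta)=\sup_{[0,\delta]}\ddot f^{+}\le C(m(\delta)+R(\delta))$, yields a \emph{linear} estimate $R(\delta)\le C_1\, m(\delta)$. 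Reinserting this into $\ddot f\le C(f+|\dot f|)\le C_2\, m(\delta)$ and integrating twice from $0$ gives $m(\delta)\le C_2\, m(\delta)\,\delta^2/2$, which for $\delta$ small forces $m(\delta)=0$, i.e.\ $f\equiv 0$ near $0$, as required.

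The main obstacle I anticipate is twofold, and both parts live in the two steps above. First, the estimate is only one-sided: infinitesimal lightconvexity provides $H\le 0$ on the tangent-null-boundary set but no matching lower bound, so $\ddot f$ is bounded above but not below by $C(f+|\dot f|)$, and a naive Gronwall argument on $f+|\dot f|$ breaks down precisely where $\dot f<0$. It is the nonnegativity $f\ge 0$, exploited through Glaeser's inequality with a self-improving local constant, that compensates for the missing lower bound; one must also handle the boundary-of-interval effects in Glaeser's inequality, which I would control by working on the fixed interval $[0,b]$ where $|\dot f|$ is small near $0$. Second, the reduction producing the $O(|\dot f|)$ tangential correction is exactly the point where the non-degeneracy hypothesis is indispensable: in the degenerate case, where $g_w(w,\cdot)$ annihilates all of $T\partial M$, the correction cannot be controlled to first order and the inequality fails, consistently with the hypothesis of the theorem.
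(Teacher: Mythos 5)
Your proposal is correct in outline but takes a genuinely different route from the paper's proof of Theorem~\ref{prop:main2:4}. The paper (following Hintz--Uhlmann) builds a barrier: it foliates a compact neighbourhood of $p$ in $\{x^{n+1}\le 0\}$ by hypersurfaces $D_\epsilon=\{x^{n+1}+\epsilon(1-\delta^{-2}|x|^2)=0\}$, shows each $D_\epsilon$ is \emph{strongly} lightconvex (the quadratic correction beats the $O(\epsilon)$ error in $\mathrm{Hess}_{x^{n+1}}$ coming from infinitesimal lightconvexity of $D_0$ on a neighbourhood of directions), deduces by an interior-maximum argument that lightlike geodesics leaving $p$ outwards cannot return to $\partial M$, and only then invokes non-degeneracy, to realize $\dot\gamma(0)$ as a limit of outward-pointing lightlike directions so that $\gamma$ is trapped in $\widehat{M}\setminus\mathring{M}$. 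You instead estimate along the single geodesic $\gamma$ itself: comparison with a nearby null vector tangent to $\partial M$ yields the one-sided inequality $\ddot f\le C(f+|\dot f|)$, with non-degeneracy entering to solve for the tangential correction rather than to approximate the geodesic. Both proofs consume the same three inputs (lightconvexity on an open set of boundary-tangent null directions, Lipschitz dependence of the spray, non-degeneracy); yours dispenses with the auxiliary foliation and the approximating geodesics at the price of a quantitative ODE lemma. A minor inaccuracy in the comparison step: the tangential correction restoring $L=0$ has size $O(f+|\dot f|)$ rather than $O(|\dot f|)$, since relocating the base point to $\partial M$ already perturbs $L$ by $O(f)$; this is harmless for the target inequality.

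The one step that needs more care than your sketch provides is the Glaeser estimate. The inequality $|\dot f(t)|^2\le 2M(\delta)f(t)$ requires the comparison point $t-\dot f(t)/M(\delta)$ to lie in the interval, and with the \emph{shrinking} bound $M(\delta)\le C(m(\delta)+R(\delta))$ this fails near the endpoints precisely because $M(\delta)$ is small; on the other hand, your suggested fallback of working with the fixed bound $\sup_{[0,b]}\ddot f^{+}$ only yields $R(\delta)=O(\sqrt{m(\delta)})$, which is insufficient for the final bootstrap (it gives $\sqrt{m(\delta)}\le C\delta^2$ rather than $m\equiv 0$). The gap is fillable: if the optimal $h=|\dot f(t)|/M(\delta)$ exits $[0,b]$ on the left, the Taylor inequality at $h=t$ combined with $f(t)\le\tfrac12 M(\delta)t^2$ gives $|\dot f(t)|\le M(\delta)\,t\le C\delta\,(m(\delta)+R(\delta))$; if it exits on the right, taking $h=b-t\ge b-\delta$ gives $|\dot f(t)|\le 2m(\delta)/b+\tfrac12 b\,M(\delta)$. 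After shrinking $b$ and $\delta$ so that the $R(\delta)$ terms can be absorbed, all cases produce the linear bound $R(\delta)\le C_1 m(\delta)$, and then $m(\delta)\le \tfrac12 C(1+C_1)m(\delta)\delta^2$ forces $m\equiv 0$ near $0$ as you intend. With these two points repaired, your argument is a valid, and arguably more elementary, alternative to the proof in the paper.
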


\begin{proof} Following  \cite[Lemma 2.3 (2)]{Hintz_2017}, first we will prove that a  geodesic $\gamma_0$ starting at $p\in \partial M$  pointing outwards $\withboundaryM$ with  velocity close to $\gamma$ cannot return to  $\partial M$ (at least in a certain fixed neighborhood  of   $p$ in an extension $\widehat{M}$ of $M$). Then, a   limit argument will complete the proof.

For the  first part, let $(U_0,\varphi=(x^1,\dots,x^{n+1}))$ be an adapted chart around $p$ in $\widehat{M}$ and $U\subset U_0$  a normal neighborhood of $p$ with compact closure included in  $U_0$. Then, we can regard $U, U_0$ as neighborhoods of $0$ in  $\mathbb{R}^{n+1}$ and use the Euclidean metric $\langle \cdot,\cdot \rangle$ 
and distance.

Let us define a family of hypersurfaces $D_{\epsilon}$ 
that foliate a compact neighborhood 
 $\mathcal{B}$ of $p$ in the region  $ \left\{ x^{n+1}\leq 0 \right\}$ of $U$, each $D_\epsilon$
  strongly   infinitesimally lightconvex therein. Namely, let    
\begin{equation}
\label{eq:11} \begin{array}{cll}
x^{n+1}_{\epsilon}:= x^{n+1} + \epsilon \left( 1- \delta^{-2} |x|^2 \right)
     & & |x|:=\sqrt{\Sigma_{i=1}^n(x^i)^2}<\delta^2 \\
 D_{\epsilon}:=(x_{\epsilon}^{n+1})^{-1}(0) &   & 0\leq \epsilon\leq \delta \\
 \mathcal{B}=\left\{ -\delta/2 \leq x^{n+1} \leq 0, |x| \leq \delta^2 \right\} & &
\end{array} \end{equation}
for some small  $\delta>0$ to be determined.
$D_{\epsilon}$ is  $\epsilon$-close to $D_0=\partial M \cap \{|x|<\delta^2\}$, as $dx^{n+1}_{\epsilon} = dx - 2\epsilon \delta^{-2} \sum_{i=1}^n x^i dx^i$ and $|x^i|/\delta^2<1$. 
Let $\lc{}^u$ be the set of all the $\langle \cdot,\cdot \rangle$-unit lightlike vectors in $U$. We can find  
an open neighborhood $\mathcal{O}^u$ of  $\lc{}^u \cap  T(\partial M)  $ in $\lc{}^u$ such that, for all $w\in \mathcal{O}^u$, 
\begin{equation}
  \label{pend:eq:10:nuevasec}
  \mathrm{Hess}_{x^{n+1}}^w(w,w) \leq C_1 \epsilon,
\end{equation}
(because the inequality to $0$ follows in $T\, \partial M$ by lightconvexity), $\sum_{i=1}^n (w^i)^{2}\geq C_2$ for some constant $C_2>0$ (as $w$ satisfies $\sum_{i=1}^{n} (w^i)^2=1$ on $\lc{}^u$) 
 and,  for every $q \in \pi(\mathcal{O}^u)$, the set $\lc{q}^u \cap T D_\epsilon$ is contained in $\mathcal{O}^u$ 
(as $D_{\epsilon}$ is $\epsilon$-close to $D_0$).  Choosing $\delta>0$ small, we have 
$\mathcal{B}  \subset \pi(\mathcal{O}^u)$ and, for $\epsilon>0$,

\begin{equation}
\label{eq:13}
\begin{split}
  \mathrm{Hess}_{x^{n+1}_{\epsilon}}^w(w,w)=&\mathrm{Hess}_{x^{n+1}}^w(w,w) -2\epsilon\delta^{-2} \left(\sum_{i=1}^ndx^i(w)^2+x^i \mathrm{Hess}_{x^i}^w(w,w)\right)\\
  \leq & \epsilon \left(C_1-C_2\delta^{-2}  \right)<0,
\end{split}
\end{equation}
(a smaller $\delta$ can be chosen  to ensure that  the first parentheses is positive). This holds for any $w$ lightlike   $\langle \cdot,\cdot \rangle$-unit and tangent to $D_\epsilon$ in $\mathcal{B}$, thus proving strong lightconvexity for each $D_\epsilon$.

 Now, consider any $\langle \cdot,\cdot \rangle$-unit reparametrized lightlike geodesic $t\mapsto \gamma_0(t)$ which starts at $p$ and points outwards $\withboundaryM$. Let us show that $\gamma_0$ 
 cannot return to $\partial M$ in $\mathcal{B}$. Otherwise, consider the value of $\epsilon$ on $\gamma_0$ as  a function  in the parameter $t\in [0,b)$, that is, putting  $x^{n+1}_{\epsilon}=0$ in \eqref{eq:11},
$$
\epsilon(t):= \frac{  -  x^{n+1}(\gamma_0(t))}{1-\delta^{-2}|\gamma_0(t)|_0^2}. \qquad t\in [0,b).
$$
Before $\gamma$ returns to $\partial M$, $\epsilon$  must reach a maximum $\epsilon_*=\epsilon (t_*)$, and  $ \dot \gamma_0(t_*)$ must be tangent to $D_{\epsilon_*}$ therein. This  contradicts the strong lightconvexity of $D_{\epsilon^*}$, because it would force  $\gamma_0$ to move away in the direction of increasing $\epsilon$, as required.  Notice also that,  as $\gamma_0$ must abandon $\mathcal{B}$, its (maximal) domain  satisfies $b> b_0:=$ Min$\{\delta/2,\delta^2\}$.

Once this first step has been completed, recall that $\gamma$ starts with a lightlike velocity tangent to $\partial M$.  By the condition of non-degeneracy of $\partial M$, $\gamma$ can be approximated by lightlike geodesics $\gamma_k$ starting at $p$, that is, $\dot\gamma_k(0)\rightarrow \dot\gamma(0)$, with velocities $\dot\gamma_k(0)$ pointing outwards $\withboundaryM$.
Taking them also  $\langle \cdot,\cdot \rangle$-unit parametrized in $[0,b_0]$, we have proved that all $\gamma_k((0,b_0))$  are  contained in $\widehat{M}\setminus \withboundaryM$, thus, $\gamma([0,b])\subset\widehat{M}\setminus \mathring{M}$ and the result follows (recall footnote \ref{f_localconvexity} too). 
\end{proof}
The proof  of lightconvexity in Th. \ref{prop:main2:4} can be directly extended to time and space convexities (even though it is unclear either  if it could be extended to prove Th. \ref{prop:main2:6} or viceversa.
Anyway, Theorem \ref{t1} follows from these two theorems and Proposition \ref{prop_strictconv}.


\section{Cone structures and 
causal simplicity}\label{s3}

\subsection{Preliminaries on Finsler spacetimes with boundary}
\label{sec:lorentzfinsler}
Next, we will introduce the basics of cones, Lorentz-Finsler metrics and causality following  \cite{JS20} and adding the pertinent observations to include the boundary $\partial M$.

\subsubsection{Cone structures and Lorentz-Finsler metrics}
\label{sec:conestructure}
 We follow \cite[Def 2.1 and 2.7]{JS20} (some small redundancies in our definition are discussed therein).

\begin{definition}
\label{def:main2:4}
A cone structure $\lc{}$ defined over a manifold with boundary ${M}$ is an embedded hypersurface of $T{M}$ satisfying that, for each $p\in {M}$ and $v\in \lc{p}:=\lc{}\cap T_p{M}$:
\begin{enumerate}[label=(\roman*)]
  \item The set $\lc{p}$ is conic and salient, that is, for $v\in \lc{p}$, $\left\{ \lambda v:\lambda>0 \right\}\subset \lc{p}$ and $-v\notin \lc{p}$.
  \item $\lc{p}$ is the boundary in $T_p{M}$ of a non-empty open subset $A_p\subset T_p{M}$ which is convex, i.e., for any $w,z\in A_p$ the segment $\left\{ \lambda w + (1-\lambda)z:0\leq \lambda\leq 1 \right\}\subset A_p$.
  \item The second fundamental form of $\lc{p}$ regarded as an affine hypersurface of $T_p{M}$, is positive definite (with respect an inner direction pointing out to $A_p$) and its radical at each point $v\in \lc{p}$ is spanned by the radial direction. 
\item $\lc{}$ is transversal to the fibers of $TM$ (i.e. $T_v(T_pM)+T_v\lc{}=T_v(TM)$, for all $v\in \lc{p}$).
\end{enumerate}
\end{definition}

\begin{definition}
\label{def:main2:3}
A {\em Lorentz-Finsler metric} $L$ on a manifold with boundary $M$ is a   semi-Finsler metric $L:A\rightarrow \mathbb{R}$ such that each $L_p$ ($p\in M$) is a {\em (properly) Lorentz-Mikowski norm}, that is, $A_p$ is connected,  $L_p>0$, $L_p$ extends smoothly by $0$ to $\partial A_p$ and the fundamental tensor $g_v$ has Lorentzian signature $(+, -, \dots, -)$ at each $v\in \bar A_p$. A {\em Finsler spacetime} $(M,L)$ (or $(M,L,A)$) is a manifold $M$ endowed with a Lorentz-Finsler metric $L$. 
\end{definition}


The following precise connection between Finsler spacetimes and cone structures proven in \cite[Theorem 1.1]{JS20} for $\partial M=\emptyset$ extends directly to the case with boundary.

\begin{theorem}
\label{thm:main2:3}
Let ${M}$ be an $(n+1)$-dimensional manifold with boundary. 
\begin{enumerate}[label=(\roman*)]
    \item If $({M},L,A)$ is a   Finsler spacetime, the boundary $\partial A$ (i.e., the boundary of the topological closure of $A$ in $T{M} \setminus \left\{ \mathbf{0} \right\}$) defines a cone structure $\lc{}$ on $M$.
    \item Given a cone structure $\lc{}$ defined on ${M}$,  there is a Lorentz-Finsler metric compatible with $\lc{}$, that is, it is possible to choose $A \subset T{M} \setminus \left\{ \mathbf{0} \right\}$ and a Lorentz-Finsler metric $L: A \rightarrow \mathbb{R}$ so that $({M},L,A)$ is a Finsler spacetime with $\lc{} = \partial A$.
    \item The cone structures of two Lorentz-Finsler metrics $L_1, L_2$ coincide if and only if both metrics are anisotropically conformal, i.e., $L_2 = \mu L_1$, where $\mu > 0$ is defined on $\cls{A}$.
    \item The lightlike pregeodesics of two anisotropically conformal Lorentz-Finsler metrics  coincide.
    \end{enumerate}
\end{theorem}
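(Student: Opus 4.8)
The plan is to reduce each of the four claims to its boundaryless counterpart in \cite[Theorem 1.1]{JS20}, exploiting the fact that the defining conditions are pointwise in the fibers of $TM$ and local in $TM$, so that the only genuinely new feature is the regularity of the objects involved as one approaches $\partial M$. The crucial observation is that at every $p\in M$ --- interior or boundary --- the fiber $T_pM$ is a full $(n+1)$-dimensional vector space; the boundary of $M$ does not truncate the tangent spaces. Hence each condition in Definition \ref{def:main2:4} (conicity and salientness, convexity of $A_p$, the sign and radical of the affine second fundamental form, and fiberwise transversality) is a statement purely about the Lorentz-Minkowski norm $L_p$ on this full vector space, identical in form whether $p$ lies in $\mathring{M}$ or in $\partial M$.

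For \textbf{(i)} I would argue fiberwise: the Lorentz-Minkowski norm axioms of Definition \ref{def:main2:3} force $\lc{p}=\partial A_p$ to satisfy all four cone-structure conditions exactly as in the interior case, and the embeddedness and smoothness of $\partial A$ as a hypersurface of $TM$ near boundary points follow from the $C^{3,1}$-extendibility of $L$ to a same-dimensional neighborhood without boundary $\widehat{M}$, recorded in Remark \ref{rem:main2:1}. Part \textbf{(iii)} is likewise fiberwise: two Lorentz-Minkowski norms $L_1,L_2$ share the same cone $\lc{p}$ precisely when their positive regions $A_p$ coincide, and a ($0$-homogeneous) positive factor $\mu$ on $\cls{A}$ relating them is produced pointwise exactly as in \cite{JS20}, the only extra check being that $\mu$ inherits the required smoothness up to $\partial M$, again via the extension.

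For \textbf{(ii)}, which is a construction rather than a verification, I would use the embedding $M\hookrightarrow\widehat{M}$ fixed in the conventions of \S\ref{sec:basic-def}: extend the cone structure $\lc{}$ to a cone structure on the open manifold $\widehat{M}$, invoke the boundaryless case of \cite[Theorem 1.1]{JS20} to obtain a compatible Lorentz-Finsler metric on $\widehat{M}$, and restrict it to $M$. The restriction is again a Lorentz-Finsler metric with $\partial A=\lc{}$ by construction. Finally, \textbf{(iv)} requires no new work beyond what is already recorded in Remark \ref{rem:conformal_lightlikepregeod}: the invariance of lightlike pregeodesics under anisotropic conformal changes holds in any signature and, a fortiori, in the Lorentz-Finsler setting, with the boundary playing no role since this is a statement about individual geodesics and their velocities.

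The main obstacle is purely the bookkeeping of regularity across $\partial M$: one must ensure that the cone structure extends to $\widehat{M}$ without loss of the assumed smoothness class (so that \cite{JS20} applies), and that, conversely, restricting the constructed objects back to $M$ preserves the required properties up to and including the boundary. Since the conventions already place $M$ inside a same-dimensional $\widehat{M}$ with $L$ extendible while keeping $g$ non-degenerate (Remark \ref{rem:main2:1}), this reduces to checking that each cited construction is stable under such extension and restriction, which it is because all the relevant data live in $TM$ and depend smoothly on the base point up to the boundary.
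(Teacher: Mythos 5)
Your proposal is correct and follows essentially the same route as the paper, which offers no separate proof but simply asserts that \cite[Theorem 1.1]{JS20} ``extends directly to the case with boundary''; your fiberwise reduction plus the extension to the ambient same-dimensional $\widehat{M}$ is precisely the justification implicit in that assertion.
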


\begin{remark} {\em (1) Any cone $\lc{}$,  defined as above, permits one to introduce the concepts of timelike, lightlike,  causal, and spacelike vectors, 
namely: a vector \(v \in T_p M \setminus{0} \) is timelike if \(v \in A_p\), lightlike if \(v \in \lc{p}(=\partial A_{p})\) causal if either timelike or lightlike and spacelike otherwise. Thus, any cone structure generalizes the relativistic notions corresponding to the {\em future-directed} cones of a time-oriented Lorentzian manifold.

(2) As $\lc{}$ can be determined by a Lorentz-Finsler metric $L$, the previous timelike vectors correspond with those $L(v)>0$, the lightlike ones with  $L(v)=0$ (in extended $L$), and the spacelike ones with the others in $TM\setminus\{0\}$, thus satisfying $L(v)<0$ close to $\lc{}$ in each extended $L$. These signs correspond to the natural use of signature $(+, -, \dots, -)$ for Lorentz-Minkowski norms, which  is the opposite  to  $(-, +, \dots, +)$, most commonly used in relativistic and semi-Riemannian literature, and then  for semi-Finsler metrics before. Anyway, this will be harmless, as the timelike/spacelike criterion respect to the cone $\lc{}$ is universally accepted. }
\end{remark}
From the previous theorem, cone structures do not uniquely determine a Lorentz-Finsler metric, but rather an entire class of anisotropically conformal metrics, extending the case of Lorentzian manifolds.
  However, the way to construct  Lorentz-Finsler metrics from a cone structure is much more flexible, as the following procedures shows.

\begin{remark}[Lorentz-Finsler metrics from cone triples]\label{r_conetriple} {\em The following procedure in \cite[Lemma 2.15, Theorem 2.17]{JS20} allows us to construct
naturally cone structures, including the case of a manifold with boundary $M$. 
Consider a \emph{cone triple} $(\Omega,\tmv{},F)$ composed by an 
 $1$-form $\Omega$, a vector field $\tmv{}$ and a Finsler metric $F$ on the kernel of $\Omega$, under the restriction $\Omega(\tmv{})\equiv 1$, and let $\Pi:TM\rightarrow$ ker$\Omega$ be the natural projection in the direction of $\tmv{}$. Then, $L:=\Omega^2-(F\circ \Pi)^2$ defines a Lorentz Finsler metric on $M$ with cone $\lc{} =\{\Omega(v)=F(v), v\in TM\{\setminus{0}\}$ with the caution that $L$ is smooth everywhere but in the direction of $\tmv{}$. Anyway, $L$ 
  can be smoothened in any small neighborhood around $\tmv{}$ (see \cite[Theorem 5.6]{JS20}), but we are interested only  in its value around the cone $\lc{}$, where it is smooth.
 Conversely, any cone structure $\lc{}$ permits to construct a (highly non-unique) cone triple $(\Omega,\tmv{},F)$ whose associated cone is $\lc{}$, just  choosing  as $\Omega$ any $\lc{}$-timelike form 
 (i.e. $\Omega(v)>0$ on any causal  $v$), as $\tmv{}$ any $\lc{}$-timelike vector with $\Omega(\tmv{})\equiv 1$ and $F$ determined by its indicatrix    (i.e., the set of its unit vectors), chosen  equal to $\Pi(\lc{} \cap \Omega^{-1}(1))$.
}
\end{remark}
  
\subsubsection{Basics on causality and boundaries}
Causality in Lorentzian manifolds with timelike boundary was studied systematically in \cite{RMI_AFS} for the case of Lorentz manifolds. Essentially, it can be transplanted directly to the case of Lorentz-Finsler manifolds (see \cite{JPS_Snell,Sa-BIRS}), anyway, the following observations are in order. First, the properties of causality depend only on the cone structure $\lc{}$  and we will refer undifferentiatedly to   $\lc{}$ or any compatible Lorentz-Finsler metric $L$ (or Finsler spacetime).

\begin{definition}
    A cone structure $\lc{}$ on a manifold with boundary $M$ has timelike boundary when the intersection $\lc{p}\cap T_p\partial M$ is non-empty and transverse for all $p\in \partial M$.
\end{definition}
\begin{remark}\label{r_timelike_no_degenerate}
    {\em For any compatible $L$, this means that the restriction of $L$ to $T\, \partial M$ is also a Lorentz-Finsler metric. In particular, $L$ has  non-degenerate boundary according to Definition~\ref{d_no_degenerate}. } 
\end{remark}

As in the case of Lorentzian manifolds, we can define the notion of chronologically (causally, horismotically) related points in any Lorentz-Finsler manifold. As emphasized in \cite[Section 2]{RMI_AFS} (see Remark \ref{r_cone_vs_horismos} below), an important caution is that the allowed regularity for the lighlike curves must be locally Lispchitz (or $H^1$), so, the causal character of their velocities is required only a. e.  Then, for $p,q\in \withboundaryM$,  $p$ is \emph{chronologically} (resp. \emph{causally};  \emph{horismotically}) \emph{related} to $q$, denoted $p\ll q$ (resp. $p\leq q$; $p\hookrightarrow  q$), if there exists a timelike (resp. causal; causal but not timelike) curve $\gamma:[a,b]\rightarrow \withboundaryM$ with $\gamma(a)=p$ and $\gamma(b)=q$. As $\lc{}$ plays the role of the future-directed cone in Lorentz spacetimes, one defines the usual chronological and causal future and pasts as
\begin{equation}
\label{eq:29}
\begin{array}{cc}
  I^{+}(p):=\left\{ q\in \withboundaryM:p\ll q \right\},& J^+(p):=\left\{ q\in \withboundaryM:p\leq q \right\}\\
    I^{-}(p):=\left\{ q\in \withboundaryM:q\ll p \right\},& J^-(p):=\left\{ q\in \withboundaryM:q\leq p \right\}.
\end{array}
\end{equation}
We will be interested in the intrinsic causality of the interior $\mathring{M}$, thus, the notation $I^{\pm}(p, \mathring{M}), J^{\pm}(p,\mathring{M})$ means that the corresponding subset is computed by using curves entirely  contained in $\mathring{M}$. Some basic properties, to be used next, are the following which extend \cite[Proposition 2.6]{RMI_AFS}.


\begin{proposition}\label{RMI}
 Let $(\withboundaryM,L,\withboundaryA)$ be a Lorentz-Finsler manifold with  timelike boundary. Then: (a)~$I^{\pm}(p)$ are open in $M$, (b) for any $p,q,r\in M$ with either $p\ll q\leq r$ or $p\leq q\ll r$, it follows that $p\ll r$, (c) $J^{\pm}(p)\subset \mathrm{cl}(I^{\pm}(p))$ and (d) $I^{\pm}(p,\mathring{M})=I^\pm(p)\cap \mathring{M}$.   In particular, the closure of $I^\pm(p,\mathring{M}$) in $\mathring{M}$ is equal to the intersection of $\mathring{M}$  with the closure of  $I^\pm(p)$ in $M$. 
\end{proposition}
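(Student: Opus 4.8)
The plan is to prove each of the four assertions in Proposition~\ref{RMI} by reducing them, as directly as possible, to the corresponding statements in \cite[Proposition 2.6]{RMI_AFS}, and then to fill the gaps that arise from passing to the Lorentz-Finsler setting and from the interior-versus-boundary distinction in part~(d). The key preliminary observation is that all the chronological/causal notions depend only on the cone structure $\lc{}$ (as stressed after the definition of timelike boundary), so the arguments of \cite{RMI_AFS}, which are formulated for Lorentz metrics, transplant once one knows that the relevant local cone geometry is unchanged. Concretely, the timelike-boundary hypothesis guarantees (Remark~\ref{r_timelike_no_degenerate}) that the restriction of any compatible $L$ to $T\,\partial M$ is again Lorentz-Finsler, so that $\partial M$ carries genuine timelike directions and the boundary does not obstruct the usual pushing-off arguments.

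For part~(a), the standard proof that $I^{\pm}(p)$ is open proceeds by taking $q\in I^+(p)$, joining $p$ to $q$ by a timelike curve, and perturbing its endpoint within a convex (or cone-adapted) neighbourhood; the openness of the cone $A$ in $TM$ (Definition~\ref{def:main2:4}(ii)) and the transversality of $\lc{}$ at the boundary let this perturbation stay timelike even when $q$ or $p$ lies on $\partial M$. Part~(b), the ``push-up'' property $p\ll q\le r \Rightarrow p\ll r$ (and its mirror), is the classical composition-of-causal-curves lemma: one deforms the causal segment near an intermediate point into a timelike one using convexity of $A_p$, and again the timelike boundary ensures this can be done even if the intermediate point sits on $\partial M$. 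Part~(c), $J^{\pm}(p)\subset \mathrm{cl}(I^{\pm}(p))$, follows by approximating a given causal curve from $p$ to $q$ by timelike curves to points near $q$, once more invoking openness of $A$; here the content is purely local and carries over verbatim from \cite{RMI_AFS}.

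The genuinely new step is part~(d), the identity $I^{\pm}(p,\mathring M)=I^{\pm}(p)\cap\mathring M$ and its closure consequence, and this I expect to be the main obstacle. The inclusion $\subseteq$ is trivial since curves in $\mathring M$ are curves in $M$. For $\supseteq$ one must take $q\in I^+(p)\cap\mathring M$, realized by a timelike curve $\sigma$ from $p$ to $q$ that may touch $\partial M$, and produce a timelike curve from $p$ to $q$ lying \emph{entirely} in $\mathring M$. The idea is to perturb $\sigma$ slightly inwards off the boundary: using the timelike-boundary condition, near each boundary contact point there exist inward-pointing timelike directions, so $\sigma$ can be homotoped through timelike curves into the interior while keeping the endpoints fixed (the interior points $p,q$ are unproblematic, and intermediate boundary excursions are pushed in). One must take care that $p$ itself may lie on $\partial M$, in which case the initial segment is steered inward immediately using an inward timelike direction at $p$, and that the perturbation is carried out on compact subsegments so that finitely many local adjustments suffice. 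The final sentence then follows formally: since $I^\pm(p,\mathring M)=I^\pm(p)\cap\mathring M$, taking closures relative to $\mathring M$ gives $\mathrm{cl}_{\mathring M}(I^\pm(p,\mathring M))=\mathrm{cl}_M(I^\pm(p))\cap\mathring M$, using that closure in the subspace $\mathring M$ is the trace on $\mathring M$ of the closure in $M$.
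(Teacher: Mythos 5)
Your proposal is correct and follows essentially the same route as the paper, which for (a)--(c) simply invokes the Lorentzian arguments (via the Lorentz--Finsler version of O'Neill's Lemma 10.45 from \cite{AaJa}) and for (d) the push-in argument of \cite[Proposition 2.6(d)]{RMI_AFS}, with the normal vector field there replaced by an arbitrary inward rigging vector field $\eta$. The only cosmetic difference is that you speak of pushing inward along \emph{timelike} directions, whereas the pushing field need only be transverse and inward-pointing (a rigging field), timelikeness of the perturbed curve being guaranteed by the openness of the cone.
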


\begin{proof} 
Properties (a), (b) and (c) can be obtained as in \cite[Props. 3.5, 3.6 and 3.7]{SolisPHD} by using \cite[Lemma 6.3]{AaJa}, which is a Lorentz-Finsler version of \cite[Lemma 10.45]{oneill}.\footnote{All properties can be also achieved by the corresponding Lorentz-Finsler adaptations of \cite[Lemmas 4.22 and 4.23]{FHS_ATMP}, where the timelike $C^1$-boundary is  identified with a part  of the causal boundary.}  
For (d), the proof follows in the same way as in \cite[Proposition 2.6 (d)]{RMI_AFS} but with a slight modification, namely, that the standard Lorentz choice of a normal vector to the boundary (whose extension to $M$ is denoted $N$ therein) can be weakened by  an arbitrary choice of a rigging vector field $\eta$. 
   \end{proof}
  
In the case of boundary, $\lc{}$ permits to distinguish  between the following  notions.
\begin{definition}
\label{def:main2:7} Let \(\lc{} \) be a cone structure on $M$  and  \(\gamma: I \rightarrow {M}\) be a curve (\(I\subset \R\)  an interval). 

(A)  \(\gamma\) is  \emph{locally horismotic} if it is locally Lipschitz (or $H^1$) and,  for each \(s_0 \in I\) and any open neighborhood \(V\) containing \(\gamma(s_0)\), there exists another open neighborhood \(U \subset V\), with \(\gamma(s_0) \in U\), that satisfies the following condition: given \(\epsilon > 0\) such that \(\gamma(I_{\epsilon}) \subset U\) where \(I_{\epsilon} = [s_0 - \epsilon, s_0 + \epsilon] \cap I\), and for any pair \(s_1, s_2 \in I_{\epsilon}\) with \(s_1 < s_2\), the points \(\gamma(s_1)\) and \(\gamma(s_2)\) can be connected by a lightlike curve within \(U\), but no timelike curve exists between them.

(B) \(\gamma\) is a  \emph{pre-geodesic for $\lc{}$} if it is smooth and satisfies that it is a lightlike geodesic for a compatible Lorentz-Finsler metric, up to a reparametrization.
\end{definition}

\begin{remark}\label{r_cone_vs_horismos} {\em In case $\partial M=\emptyset$ the pre-geodesics for $\lc{}$ are equal to the lightlike (pre-)geodesics of any Lorentz-Finsler metric $L$ compatible with $\lc{}$ by  \cite[Theorem 6.6]{JS20} (indeed, both terms are used interchangeably therein). However, an example in \cite[Appendix B]{RMI_AFS} shows that, even in the case of a $C^\infty$ boundary $\partial M$, the locally Lispchitz curves (or, under a more general viewpoint, the $H^1$ ones) permits to reach  a points horismotically related which cannot be connected by means of a (piecewise) smooth curve. Anyway, locally horismotic curves are a.e. differentiable (by Rademacher theorem) and in these points, the velocity must be lightlike.}
\end{remark}

\subsection{Lightconvexity of $\partial M$ and causal simplicity for $\mathring{M}$}
\label{sec:caus-simpl-conv}

\subsubsection{Relevant steps of the causal ladder}
The following steps of the ladder of causality of spacetimes-with-timelike-boundary will be used along this paper. Their definitions are formally equal  in the cases with and without boundary, as well as in the cases Lorentz and Lorentz-Finsler.  
Namely, a cone structure (or Lorentz-Finsler metric  $L$) on a manifold  with boundary $M$ is: 
\begin{itemize}
\item {\em Causal} if it admits no closed causal curve.
\item {\em Strongly causal} if it is {strongly causal  at each  $p \in M$}, the latter meaning that,  for any neighborhood $V$ of $p$ there exists a smaller one $U\subset V$ such that any  timelike curve starting at $U$  that  leaves $V$  (both towards the future  or  the past)  does not enter $U$ again.     
\item {\em Causally simple} if it is causal and the sets $J^{\pm}(p)$ are closed for all $p\in \withboundaryM$.
\item {\em Globally hyperbolic} if it is causal and the intersections $J^+(p)\cap J^-(q)$ are compact for any pair $p,q\in \withboundaryM$. 
\end{itemize}

The  properties of the case with boundary were analyzed in the Lorentz case  in \cite{RMI_AFS} and we will be interested in several ones that  extended directly to the case Lorentz-Finsler.
First, each level implies the previous one \cite[Lemma 3.6]{RMI_AFS}. Second, neither the global hyperbolicity nor the causal simplicity of $M$ imply the causal simplicity of its interior $\mathring{M}$, but only a weaker property (namely, $\mathring{M}$ is causally continuous \cite[Theorem 3.8]{RMI_AFS}).  

\subsubsection{Main result}
Next, our aim is to prove that 
lightconvexity of the boundary characterizes when the interior is causally simple.


\begin{theorem}
\label{thm:main2:4} 
Let $(\withboundaryM,L,{\withboundaryA})$ be a causally simple Lorentz-Finsler manifold with timelike boundary. Then, they are equivalent:

(i) $\partial M$ is infinitesimally lightconvex. 

(ii) $\partial M$ is locally lightconvex. 

(iii) The interior $\mathring{M}$ of $M$ is causally simple. 
\end{theorem}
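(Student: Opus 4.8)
The plan is to prove the cycle of implications $(i)\Leftrightarrow(ii)$ essentially for free, and then concentrate on relating local lightconvexity of $\partial M$ to causal simplicity of $\mathring M$. First, the equivalence $(i)\Leftrightarrow(ii)$ is immediate: Remark~\ref{r_timelike_no_degenerate} tells us that a timelike boundary is non-degenerate in the sense of Definition~\ref{d_no_degenerate}, so Theorem~\ref{prop:main2:4} gives $(i)\Rightarrow(ii)$, while Proposition~\ref{prop_strictconv} gives the converse $(ii)\Rightarrow(i)$. Hence the real content is the equivalence of these with $(iii)$, and I would organize the argument as $(ii)\Rightarrow(iii)$ and $\neg(i)\Rightarrow\neg(iii)$ (contrapositive).

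For $(ii)\Rightarrow(iii)$, the idea is that local lightconvexity prevents lightlike boundary geodesics from detaching into the interior, so that causal curves of $\mathring M$ behave well. Concretely, assuming $\partial M$ is locally lightconvex, I would show that for $p\in\mathring M$ the set $J^+(p,\mathring M)$ is closed in $\mathring M$. Since $M$ is globally causally simple, $J^+(p)$ is closed in $M$; by Proposition~\ref{RMI}(d) and its final sentence, the closure of $I^+(p,\mathring M)$ in $\mathring M$ equals $\mathring M\cap\mathrm{cl}(I^+(p))$, and using $J^\pm\subset\mathrm{cl}(I^\pm)$ the remaining point is to rule out that the closure of $J^+(p,\mathring M)$ picks up interior limit points that are not causally reachable \emph{within} $\mathring M$. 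Such a failure would be witnessed by a limit causal curve whose limit is forced to touch $\partial M$ and then re-enter $\mathring M$ along a lightlike direction tangent to $\partial M$; local lightconvexity (via Theorem~\ref{prop:main2:4}, which guarantees that tangent lightlike geodesics stay in $\partial M$) forbids exactly this re-entering, so the limit curve can be pushed back into $\mathring M$ and the causal relation is realized interiorly. This is where the preliminary results on limit curves and horismotic relations (and the $H^1$/locally Lipschitz regularity flagged around Remark~\ref{r_cone_vs_horismos}) are essential.

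For the contrapositive $\neg(i)\Rightarrow\neg(iii)$, I would use Proposition~\ref{prop_strictconv}: if infinitesimal lightconvexity fails at some $p\in\partial M$, there is a lightlike $w\in A_p\cap T_p\partial M$ with $II^w(w,w)<0$, and the proposition produces a lightlike geodesic $\gamma$ starting at $p$ tangent to $\partial M$ that immediately enters $\mathring M$ and meets $\partial M$ only at $p$. This $\gamma$ enters $\mathring M$, and the point is to see it witnesses a failure of causal simplicity of $\mathring M$: its endpoint (or a point beyond where $\gamma$ would leave $\mathring M$) lies in the closure of some $J^\pm(q,\mathring M)$ for an interior $q$ approaching $p$ along $\gamma$, but is not itself in $J^\pm(q,\mathring M)$ because any interior causal curve realizing the relation would have to use the boundary point $p$. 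Thus $J^\pm(q,\mathring M)$ is non-closed and $\mathring M$ is not causally simple.

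The main obstacle I anticipate is the $(ii)\Rightarrow(iii)$ direction, specifically the limit-curve analysis at the boundary: one must control sequences of causal curves in $\mathring M$ whose limits accumulate on $\partial M$, show that the limit curve's boundary contact is along a \emph{lightlike tangent} direction (rather than transverse, which would be harmless), and then invoke lightconvexity to deform the limit back into the interior without changing its endpoints. The subtlety flagged in the text---that horismotic connection may require only locally Lipschitz ($H^1$) curves and cannot always be achieved by piecewise-smooth ones---means the deformation argument must be carried out at the level of $H^1$ causal curves and a.e.-lightlike velocities, which is the technically delicate step and is precisely why the preceding equivalence of lightconvexities (Theorem~\ref{prop:main2:4}), together with the causality machinery of Proposition~\ref{RMI}, is the linchpin of the proof.
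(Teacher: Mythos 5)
Your treatment of $(i)\Leftrightarrow(ii)$ is exactly the paper's (Theorem~\ref{prop:main2:4}, Proposition~\ref{prop_strictconv}, Remark~\ref{r_timelike_no_degenerate}), and your contrapositive $\neg(i)\Rightarrow\neg(iii)$ is in substance the paper's argument: from $II^w(w,w)<0$ one gets, via Proposition~\ref{prop_strictconv}, a lightlike geodesic $\gamma$ through $p\in\partial M$ lying in $\mathring{M}$ except at $p$; taking $p_\pm=\gamma(\pm\delta)$ in a small globally hyperbolic neighborhood where $\gamma$ is the \emph{unique} causal curve joining them, one has $p_-\notin J^-(p_+,\mathring{M})$ while $p_-$ lies in the closure of $I^-(p_+,\mathring{M})$ by the last assertion of Proposition~\ref{RMI}. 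You should fix two points on either side of $p$ and invoke this uniqueness explicitly (your phrasing ``for an interior $q$ approaching $p$'' does not quite do this), but the idea is the right one.

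The genuine gap is in $(ii)\Rightarrow(iii)$, and you have the difficulty inverted. The paper does not need a limit-curve analysis at all: causal simplicity of $M$ directly supplies a causal curve $\gamma$ in $M$ from $p$ to $q$, which (after discarding the easy cases) may be assumed locally horismotic and to meet $\partial M$. The case where $\gamma$ meets $\partial M$ \emph{tangentially} is the harmless one: it is dispatched in one line, since by (ii) such a lightlike geodesic stays in $\partial M$, contradicting $p\in\mathring{M}$. The hard case --- which your proposal declares ``harmless'' and therefore never addresses --- is when $\gamma$ is not smooth and hits $\partial M$ \emph{transversally} at a first point $r$, with incoming velocity $v_-$ pointing outwards and outgoing limit velocity $v_+$ not pointing outwards, so that the corner vector $\Delta v=v_+-v_-$ points inwards. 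The paper then runs a variation argument in the spirit of \cite[Prop.~10.46]{oneill}: parallel-transport $\Delta v$ along $\gamma$ with the Levi-Civita-Chern anisotropic connection $\nablaLCC$ (needed because it parallelizes $g$), use $g_{v_\pm}(v_\pm,\mp\Delta v)>0$ to get a sign change of $g_{\gamma'}(\gamma',W)$ across $r$, and produce a fixed-endpoint variation by \emph{timelike} curves, giving $p\ll q$ and contradicting the horismotic relation. Your plan to ``deform the limit back into the interior without changing its endpoints'' while keeping it merely causal would not suffice even if carried out: one must upgrade the connection from causal to timelike to reach the contradiction, and nothing in lightconvexity alone does that at a transverse corner. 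This corner/variation step is the actual technical heart of the implication and is entirely absent from the proposal.
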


\begin{proof}
 (i) $\Longleftrightarrow$ (ii) Notice that Theorem \ref{t1} is aplicable by Remark \ref{r_timelike_no_degenerate}.

(iii) $\Longrightarrow$ (i). Assume by contradiction 
$II^w(w,w)<0$ with 
$w=\dot\gamma(0) \in \lc{p}$, as 
in Proposition \ref{prop_strictconv}. Take a  small globally hyperbolic  neighborhood $\widehat{U}$ of $p$ in $\widehat{M}$ so that the  geodesic $\gamma$ is included in $\mathring{M}\cap \widehat{U}$, up to $\gamma(0)=p$. Then, for some small $\delta>0$ the points 
$p_\pm :=\gamma(\pm \delta)$ are horismotically related in $\widehat{M}$ and the lightlike geodesic $\gamma$ is 
the unique causal curve in $\widehat{M}$ connecting them. Thus, $p_-\not\in J^-(p_+, \mathring{M})$. However,
$p_-$ belongs to the closure of $I^-(p_+, \mathring{M})$ in $\mathring{M}$  by the last assertion of Proposition  \ref{RMI}, contradicting the causal simplicity of $\mathring{M}$.

(ii) $\Longrightarrow$ (iii)
  Let us consider $p\in \mathring{M}$ and a point $q$ 
  in the closure of 
  $I^{+}(p, \mathring{M})$ computed in $\mathring{M}$ We have to prove $q\in J^{+}(p, \mathring{M}$) (a dual reasoning would work for $I^-, J^-$). 
  
  From the causal simplicity   of $M$,  $q\in J^+(p)$, thus,  there exists a (locally Lipschitz) causal curve $\gamma:[a,b]\rightarrow \withboundaryM$ from $p$ to $q$. 
  If $\gamma\subset \mathring{M}$ then we are done. Moreover,  this also happens if $p\in I^-( q)$ by Proposition \ref{RMI} (d).  So,  $p$ and $q$ must be joined  by a locally horismotic curve 
  $\gamma$. 
  
  If $\gamma$ is smooth, then it is a lightlike geodesic, it must intersect the boundary and must be tangential therein. 
  Hence, it should be contained in $\partial M$ by our hypothesis (ii), a contradiction. 
  
  Otherwise, $\gamma$ can be parametrized as a Lipschitz curve in any local chart\footnote{Such a parametrization (equivalent here to $H^1$) will be used below to ensure the existence of convergent subsequences for $\gamma'$. Moreover, one can even assume that $\gamma$ is globally  parametrized  by a temporal function $\tau$, which is a common choice for several purposes (see  \cite[Corollary 4.13]{JPS}, where the equations of lightlike pregeodesics under such a parametrization are provided).} with $0\in (a,b)$ in such a way that  $r:=\gamma(0)$ is the first intersection point of $\gamma$ and $\partial M$ from $p$. 
  Thus, $\gamma|_{[a,0]}$ is smooth\footnote{\label{f_RMI} Taking into account that the chronological future in $M$ can be computed by using locally Lipschitz curves as well as piecewise ones (by \cite[Section 2.4]{RMI_AFS}, which is directly extensible to the Lorentz-Finsler case)}, in particular, 
  $v_- := 
  \lim_{t\nearrow 0} \dot{\gamma}(t)$ is well defined  and it cannot be tangent to $\partial M$, moreover, $v_-$ must point outwards $\partial M$.    Then, 
choose a sequence  $t_m\searrow 0$ where $\gamma$ is differentiable, and take  the limit (up to a subsequence) $v_+:=\lim_{t_m\searrow 0} \dot\gamma(t_m)$. Obviously $v_+$ cannot point out outwards. Thus, 
$$\Delta v:=v_+-v_-$$ 
points out inwards. This property will permit to find a contradiction, namely, a   variation of $\gamma$  by  causal curves included in $\mathring{M}$ with fixed endpoints and positive length. For this purpose we will extend  arguments as   \cite[Prop. 10.46]{oneill} for the regular Lorentzian case without boundary. 

Let $W$ be the vector field along $\gamma$ obtained by propagating parallelly $\Delta v$ by using the {\em Levi-Civita-Chern} anisotropic connection $\nablaLCC$ (see Appendix \S \ref{A_anisotropic}) in the direction of
\footnote{In the Lorentz case, the existence and uniqueness of $V$ is ensured because the equation of the parallel transport is a first order ODE system with Lipschitz coefficients in each coordinate chart. This can be extended to the Lorentz-Finsler case, as the expression of coordinates gives a first order equation of the same type, see \cite[eqn (32)]{Javaloyes_2022}}
$\gamma'$. At 0 one has $g_{v_\pm} (v_\pm, \mp \Delta v)>0$ and, by continuity,  $g_{\gamma'}(\gamma',W)$ 
is positive in  $[a',0]$ and negative on $[0,b']$ for some   $a\leq a'<0<b'\leq b$; we can assume $a=a', b=b'$ with no loss of generality\footnote{Indeed, as $\gamma$ is a geodesic on $[a,0]$ and $W$ is parallel then $g_{\gamma'}(\gamma',W)$ is constant therein (as $\nablaLCC$ parallelizes the fundamental tensor $g$, see \cite[Proposition 7]{Javaloyes_2022}). However, this does not hold on $[0,b]$ and only the continuity of the parallel transport is claimed therein.}. 
Now choose a piecewise smooth function f on $[a, b]$ that vanishes at endpoints and has derivative $f’$ positive on $[a, 0]$ and negative
on $[0,b]$. Then, for $V = f W$ we have $g_{\gamma'}(V’, \gamma ’) > 0$. Easily, the  symmetry of the Levi-Civita-Chern anisotropic connection  implies that the longitudinal curves are timelike (see \cite[Lemma 6.3]{AaJa}, which extends the Lorentz case \cite[Lemma 10.45]{oneill}). 

\end{proof}

\begin{remark}
   {\em  In a globally hyperbolic spacetime-with-timelike-boundary, the interior $\mathring{M}$ cannot be globally hyperbolic as long as $\partial M\neq \emptyset $ (indeed, in a natural way $\partial M$ can be identified with the {\em naked singularities} \cite[Appendix A]{RMI_AFS}). Thus, causal simplicity is the best intrinsic step of the causal ladder for both the hypothesis and the item (iii) of the theorem.}
\end{remark}

\section{The lightspace $\Ng$  of  the cone geodesics 
 in a Finsler  spacetime}
\label{s_4}

\subsection{$\Ng$ for anisotropic cone structures}\label{s_4.1}

The space of  lightlike (or {\em null}) geodesics for a (Lorentz) spacetime was introduced by Low in \cite{Low_1989} (see also \cite{Low_2006}). Next, we will consider its generalization, the  {\em space of cone geodesics} or simply {\em lightspace} for  $\Ng$ any cone structure $\lc{}$ (equally, for any compatible Lorentz Finsler $L$). It is worth pointing out:
(a) Low's construction  extends in a natural way to our general cone structures $\lc{}$, even if the latter involves nonlinear connections, (b) we will be interested in the space of cone geodesics $\Ng$
for the interior $\mathring{M}$ of a Finsler spacetime with boundary $M$ (and how the boundary 
$\partial M$ provides information on this space), anyway, the basic general considerations in this subsection are valid also for the case with boundary, and (c) we will use a construction in the tangent bundle $TM$ which is enough for our purposes, even if it is less refined than  Low's one, who used a Hamiltonian approach in the cotangent bundle $T^*M$.


\subsubsection{$\Ng$ as a quotient} In the Lorentz-Finsler case, as in the Lorentz one, both the geodesic spray $G$ and the radial (Liouville) field $\mathcal{R}:=y^k \frac{\partial}{\partial {y^k}}$ preserve the tangent to the cone $T\lc{}$. Moreover,  from the positive two homogeneity of the functions $G^a$ in Definition \ref{def:main2:13} 
 one obtains,   
\begin{equation}\label{e_corchete}[\mathcal{R},G]=G. \end{equation}
 Thus, Span $\{\mathcal{R},G\}$ is involutive, supporting the following definition.\footnote{More generally,  for 2-positive homogeneous vector fields $G$ on $TM$, it is well known that $G$ is a geodesic vector field  for a symmetric nonlinear connection if and only if \eqref{e_corchete} holds and $\nu(G)=R$, where the endomorphism $\nu: 
T(TM)\rightarrow T(TM)$ is defined by taking  
$\nu(w_{v_p})$ equal to the vertical vector in 
$T_{v_p}(TM)$ naturally identifiable to 
$d\pi_{v_p}(w_{v_p})\in T_pM$ (here $\pi: TM\rightarrow M$ is the natural projection), for all 
$p\in M, v_p\in T_p M, w_{v_p}\in T_{v_p}(TM)$ (see for example \cite[Theorem 7.3.4]{Szilasi2012}).}
 

\begin{definition}
    The space of cone geodesics $\Ng$ is the space of leaves $T\lc{}/\mathcal{F}$, where $\mathcal{F}$ is the foliation determined by the integrable distribution {\rm Span} $\{\mathcal{R},G\}$ on $T\lc{}$, endowed with the quotient topology.
\end{definition}

\begin{remark}{\em 
Low's  Hamiltonian  approach is also extensible to the Finsler case. Indeed, each lightlike vector $v$ can be identified with its Legendre
transform $g_v(v,\cdot)$ for $L$. The set of all these forms is then the {\em cotangent null space} $\Ng^*M$. This space is preserved by the flow of the Euler vector field $\Delta$, and its quotient has a natural interpretation: all such forms obtained along a lightlike direction $\R^+ v$, $v\in T_pM$ for some $p \in M$, are identified with their common kernel, that is, the hyperplane $H_{\R^+ v}$ tangent  to 
$\lc{p}$ at the direction $\R^+ v$. 
The geodesic vector field $X_G$ associated with the Hamiltonian corresponding to $L$ also preserves 
$\Ng^*M$ and it satisfies $[\Delta,X_G]=X_G$. Then,  our space of cone geodesics $\Ng$ becomes equivalent to Low's quotient $\Ng^*M/\mathcal{F^*}$, where $\mathcal{F^*}$ is the foliation of $\Ng^*M$ determined by the integrable distribution Span$\{\Delta,X_G\}$.
}\end{remark}

\subsubsection{Differentiable structure of $\Ng$ in the strongly causal case}\label{subsub_dif_est_n}

In general, a foliation $\mathbf{F}$ on a manifold $Q$ 
is called {\em simple}  if, for each $p \in Q$, there exists a foliated chart $(U, \phi)$  with
the property that every leaf $L$ intersects $U$ at most in one plaque. It is well-known that 
 a foliation $\mathbf{F}$ is simple if and only if there exists a (possibly non-Hausdorff) smooth structure on the quotient $Q/\mathbf{F}$ for which the natural projection $\Pi: Q\rightarrow Q/\mathbf{F}$ is a submersion. 
\footnote{Indeed,  classical references as \cite{molino} define simple foliations as those obtained from submersions $f:M\rightarrow N$, where the leaves are the corresponding connected fibres of $f$, being the equivalence well-known
\cite[Page 70]{Ruiloja}.   
} 
 
\begin{proposition}\label{prop:manifoldfoliation}
    When the cone structure $\lc{}$ on $M^{n+1}$ is strongly causal, the foliation 
    $\mathbf{F}$ integrating  {\rm Span} $\{\mathcal{R},G\}$ is simple and, thus, 
         $\Ng$ is a possibly non-Hausdorff (2n-1)-manifold. 
\end{proposition}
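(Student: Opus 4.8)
The plan is to prove that the foliation $\mathbf{F}$ is \emph{simple} in the sense recalled above, i.e.\ that around each point of $T\lc{}$ there is a foliated chart meeting every leaf in at most one plaque. Once this is established, the general fact quoted in the excerpt (simple $\iff$ the quotient carries a smooth, possibly non-Hausdorff, structure making $\Pi$ a submersion) delivers the manifold structure on $\Ng=T\lc{}/\mathbf{F}$ immediately, and the dimension count follows from $\dim T\lc{} = 2n+1$ (since $\lc{}$ is a hypersurface in $TM$, so $\dim\lc{}=2n+1$ and... more carefully $\dim T\lc{}=2\dim\lc{}$; the leaves are $2$-dimensional, giving $\Ng$ of dimension $\dim\lc{}-2$). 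The cleanest bookkeeping is: $\lc{}$ is an embedded hypersurface of the $(2n+2)$-dimensional $TM$, so $\dim\lc{}=2n+1$; the integrable distribution $\mathrm{Span}\{\mathcal{R},G\}$ is $2$-dimensional (it is a rank-two distribution, since $\mathcal{R}$ and $G$ are pointwise independent — $\mathcal R$ is vertical, $G$ is a spray hence transverse to the fibers), so each leaf is a $2$-manifold and the quotient is $(2n+1)-2 = 2n-1$-dimensional, as claimed.

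The core task, therefore, is to construct the foliated charts realizing simplicity, and this is where strong causality enters. First I would recall that the flow of $G$ (the geodesic flow restricted to $\lc{}$) together with the flow of $\mathcal{R}$ (rescaling of the lightlike vector) generate the leaves; the relation $[\mathcal R,G]=G$ from \eqref{e_corchete} guarantees integrability (already invoked) and also that the two flows combine into a genuine two-dimensional foliation rather than degenerating. A leaf through $v\in\lc{p}$ consists precisely of the lift to $T\lc{}$ of a single unparametrized lightlike geodesic of $M$: moving along $G$ traces the geodesic, moving along $\mathcal{R}$ rescales the velocity vector, and the two together exhaust all reparametrizations and velocity-rescalings of one geodesic. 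Thus the leaf space is literally the set of unparametrized lightlike geodesics, which is why it deserves the name lightspace.

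The main step is then to produce a local transversal that each nearby geodesic meets exactly once. The plan is to fix a small spacelike hypersurface $\Sigma$ in $M$ through $\pi(v)$ transverse to the geodesic, and to use the cone bundle restricted to $\Sigma$: the set of lightlike directions based at points of $\Sigma$ projectivizes to a smooth submanifold of $T\lc{}$ of the correct codimension-two, transverse to $\mathrm{Span}\{\mathcal R,G\}$. This gives a local foliated chart. Strong causality is exactly the hypothesis ensuring that, after shrinking $\Sigma$ and the neighborhood, no lightlike geodesic returns to meet the transversal a second time: by the standard characterization of strong causality, around each point there is an arbitrarily small causally convex neighborhood $U$ such that any causal curve meets $U$ in a connected set, hence any lightlike geodesic crosses the transversal based in $U$ at most once. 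This is precisely the local injectivity needed for the plaque-uniqueness condition.

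The hard part will be the gluing/transversality bookkeeping at the level of $T\lc{}$ rather than on $M$, since $\mathbf{F}$ lives upstairs: I must verify that the transversal built from $\Sigma$ together with a choice of normalization of the lightlike direction (say $\langle\cdot,\cdot\rangle$-unit vectors for an auxiliary Riemannian metric, killing the $\mathcal R$-direction) is genuinely transverse to \emph{both} $\mathcal R$ and $G$ simultaneously and of the right dimension. The $\mathcal R$-transversality is handled by the unit-normalization; the $G$-transversality is handled by taking $\Sigma$ spacelike (so no lightlike direction is tangent to $\Sigma$, whence $d\pi(G_v)\notin T_{\pi(v)}\Sigma$). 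I expect no obstruction beyond careful dimension counting and the invocation of strong causality for global-in-the-chart injectivity; the non-Hausdorffness is deliberately \emph{not} excluded at this stage, precisely because distinct geodesics may fail to be separated in the quotient topology even though each admits a clean local chart — and this is the phenomenon analyzed in the subsequent propositions. The anisotropy of the connection causes no difficulty here, since $G$ and $\mathcal R$ are defined directly from the spray (as noted around \eqref{e_corchete}), so the entire argument is formally identical to Low's Lorentzian construction.
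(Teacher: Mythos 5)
Your proposal is correct and follows essentially the same route as the paper, whose proof simply defers the first assertion to Low's original construction (a foliated chart built from a small spacelike transversal together with a normalization killing the $\mathcal{R}$-direction, with strong causality supplying the once-only intersection of each nearby lightlike geodesic with the transversal) and then concludes with the same dimension count $\dim \lc{} - 2 = (2n+1)-2 = 2n-1$. The only cosmetic wobble is your initial hesitation over $T\lc{}$ versus $\lc{}$; as your ``cleanest bookkeeping'' correctly settles (and as the paper's own dimension count confirms), the foliation lives on the $(2n+1)$-dimensional cone $\lc{}$ itself.
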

\begin{proof}
    The first assertion is straightforward (reason as in \cite[Prop. 2.1]{Low_1989}  or \cite[Th. 1]{Low_2006}) and  
the required dimension follows because 
 $\lc{}$
has dimension $2n+1$  and  the leaves of $\mathbf{F}$ dimension 2.
\end{proof}

In what follows, we will consider  strongly causal cone structures  
$\lc{}$ 
corresponding to the interior $\mathring{M}$, and the following illustrative representation of $\Ng$ emerges naturally: 
\begin{itemize}
    \item As a point set, each element of $\Ng$, which will be called a \emph{cone geodesic}, 
    is  a set of  {\em inextendible lightlike pregeodesic}, namely, the set of curves obtained taking an inextendible  lightlike geodesic\footnote{As $\gamma$ is included in the interior $\mathring{M}$, the inextensibility of $\gamma$ as a geodesic beyond   any of the extremes $c$ of $I$
is equivalent  to the inexistence in $\mathring{M}$ of  $\lim_{t\rightarrow c} \gamma(t)$; however, such a limit might exist in $\partial M$.} $\gamma: I\rightarrow \mathring{M}$ of $L$ and all its (oriented) reparametrizations. Inextendible lighthlike pregeodesics can also be regarded as embedded degenerate totally 
geodesic\footnote{Recall that, for degenerate submanifolds, totally geodesic means that any  geodesic in the ambient with initial velocity in the submanifold remains in it.}  1-dim. submanifolds of $\mathring{M}$ with no endpoints in $\mathring{M}$. { In the remainder, cone geodesics and any of the pregeodesics  constituting them will be denoted 
equally (and distinguished by context). }
\item The topology of $\Ng$ is provided by the following natural limit
operator\footnote{See \cite[Appendix]{FHS_ATMP} and \cite[Section 2.1]{AkeSpacetimecoveringscasual2017} for background on topologies determined by limit operators.}:  
a sequence of cone geodesics $\{\gamma_k\}_k$ converges to $\gamma_\infty$
when they admit
 such that
$\{\gamma'_k(0)\}
\rightarrow
\gamma'(0)$.
This topology is {\em sequential} (that is, any subset $C\subset \Ng$ is closed when it satisfies: if a sequence in $C$ converges to some point $x \in \Ng$, then $x\in C$) by \cite[Prop 3.39]{FHS_ATMP} and, thus, it is fully determined by the convergence of 
sequences.

\end{itemize}


\subsubsection{The issue of Hausdorffness}

It is not surprising  that, in strongly causal spacetimes, $\Ng$ may be non-Hausdorff  even if it has a smooth structure. 
Indeed, non-Hausdorffness happens  in the Riemannian case for the space of geodesics of $\R^n\setminus\{0\}$ (in the sense of convergence above)  and, then, this is  transmitted to $\Ng$  in the natural Lorentzian product $\Lo^1\times (\R^n\setminus\{0\})$. 
In general, Hausdorffness is characterized by the following elementary result \cite[p. 69]{Ruiloja}.
\begin{lemma}\label{lemma_Rui}
Let $X$ be a Hausdorff topological space and $\sim$ an equivalence relation in $X$
such that $\Pi : X \rightarrow X/ \sim$ is an open map for the quotient topology on $X/ \sim$. Then:
$X/ \sim$  is Hausdorff if and only if the graph of $\sim$ is
closed in $X \times X$.
\end{lemma}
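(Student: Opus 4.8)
The plan is to prove the standard characterization of Hausdorffness for open quotients, which is a general topological fact. The statement asserts the equivalence between $X/\!\sim$ being Hausdorff and the graph $R := \{(x,y) : x \sim y\} \subset X \times X$ being closed, under the two hypotheses that $X$ is Hausdorff and that the projection $\Pi$ is open.

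First I would prove the easy direction: if $X/\!\sim$ is Hausdorff, then $R$ is closed. Here I do not even need openness of $\Pi$. I would observe that the map $\Pi \times \Pi : X \times X \to (X/\!\sim) \times (X/\!\sim)$ is continuous, and that $R = (\Pi \times \Pi)^{-1}(\Delta)$, where $\Delta$ is the diagonal of $(X/\!\sim) \times (X/\!\sim)$. Since a topological space is Hausdorff if and only if its diagonal is closed in the product, $\Delta$ is closed whenever $X/\!\sim$ is Hausdorff; hence its preimage $R$ under the continuous map $\Pi \times \Pi$ is closed in $X \times X$.

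For the converse, I would assume $R$ is closed and show $X/\!\sim$ is Hausdorff; this is where openness of $\Pi$ enters crucially. Take two distinct points $\bar a, \bar b \in X/\!\sim$ with representatives $a, b \in X$, so $a \not\sim b$, i.e. $(a,b) \notin R$. Since $R$ is closed in $X \times X$, its complement is open, so there exist open neighborhoods $U \ni a$ and $V \ni b$ in $X$ with $(U \times V) \cap R = \emptyset$; this last condition says precisely that no point of $U$ is equivalent to any point of $V$. Now I would use that $\Pi$ is an open map: $\Pi(U)$ and $\Pi(V)$ are open neighborhoods of $\bar a$ and $\bar b$ respectively in $X/\!\sim$. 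The final step is to check they are disjoint: if $\bar c \in \Pi(U) \cap \Pi(V)$, then $\bar c = \Pi(u) = \Pi(v)$ for some $u \in U$, $v \in V$, forcing $u \sim v$, which contradicts $(U \times V) \cap R = \emptyset$. Hence $\Pi(U) \cap \Pi(V) = \emptyset$, separating $\bar a$ and $\bar b$, and $X/\!\sim$ is Hausdorff.

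The argument is entirely formal, so there is no serious obstacle; the only point demanding care is to make sure openness of $\Pi$ is invoked exactly where it is needed, namely to transport the separating open sets $U, V$ downstairs to genuine open neighborhoods of $\bar a, \bar b$ in the quotient. Without openness one would only know that $\Pi(U), \Pi(V)$ are sets containing $\bar a, \bar b$, not that they are open, and the separation argument would collapse. I would present the two directions as a short self-contained proof, noting that the first direction uses only continuity of $\Pi \times \Pi$ while the converse is the substantive use of the hypotheses, and reference \cite[p.~69]{Ruiloja} for the classical statement.
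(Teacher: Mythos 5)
Your proof is correct and complete; the paper itself gives no proof of this lemma, simply citing it as an elementary classical fact from \cite[p.~69]{Ruiloja}, and your argument is exactly the standard one that reference supplies (graph $=(\Pi\times\Pi)^{-1}(\Delta)$ for one direction, openness of $\Pi(U),\Pi(V)$ for the other). As a minor observation, your proof never actually uses the Hausdorffness of $X$, so that hypothesis in the statement is redundant for the equivalence itself.
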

In the  strongly causal case, the projection $T\lc{}\rightarrow \Ng$ is a submersion among manifolds (the first one Hausdorff) and, by the theorem of the implicit function, it is open. Therefore:
\begin{proposition}\label{p_Rui}
    For strongly causal cone structures, $\Ng$ is Hausdorff if and only if  the graph of the relation of equivalence $\sim$ in $T\lc{}$ which defines $\Ng$ is
closed in $T\lc{} \times T\lc{}$.
\end{proposition}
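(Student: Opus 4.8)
The plan is to obtain the statement as a direct application of Lemma~\ref{lemma_Rui}, taking $X=T\lc{}$ and letting $\sim$ be the equivalence relation whose classes are the leaves of the foliation $\mathbf{F}$ integrating $\mathrm{Span}\{\mathcal{R},G\}$, so that by the very definition of $\Ng$ one has $X/{\sim}=\Ng$ as topological spaces. To invoke the lemma I must check its two hypotheses: that $X$ is Hausdorff and that the quotient projection $\Pi\colon T\lc{}\to\Ng$ is open.

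The first hypothesis is routine and uses no causality: $T\lc{}$ is an embedded submanifold of a tangent bundle, hence a smooth and in particular Hausdorff manifold. For the second I would invoke Proposition~\ref{prop:manifoldfoliation}: strong causality makes $\mathbf{F}$ a simple foliation, so $\Ng$ carries a (possibly non-Hausdorff) smooth structure inducing the quotient topology and for which $\Pi$ is a submersion. By the local normal form of submersions (equivalently, the implicit function theorem), $\Pi$ is locally a linear projection and therefore an open map. With both hypotheses verified, Lemma~\ref{lemma_Rui} gives exactly the asserted equivalence.

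The statement is thus mostly a matter of assembling hypotheses, and the single point requiring genuine input is the openness of $\Pi$ --- which is precisely where strong causality is indispensable, as it is what promotes $\mathbf{F}$ to a simple foliation and turns $\Pi$ into a submersion. Without strong causality, $\Ng$ need not be a manifold at all, $\Pi$ could fail to be open, and the clean biconditional supplied by the lemma would be unavailable.
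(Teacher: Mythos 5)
Your proposal is correct and follows exactly the paper's own route: the paper likewise obtains the statement by feeding Lemma~\ref{lemma_Rui} the two hypotheses that $T\lc{}$ is a Hausdorff manifold and that the quotient projection is open, the latter because strong causality (via Proposition~\ref{prop:manifoldfoliation}) makes the foliation simple, hence the projection a submersion, hence an open map. Nothing is missing.
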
 
We will come back to this  in \S $\ref{s_last}$.  
Now, notice the following first criterion of non-Hausdorffness.
\begin{proposition}\label{p_noHaus}
If a Finsler spacetime with timelike boundary is   strongly causal at $p\in \partial M$ and   $II^w(w,w)<0$ holds for some $w\in \lc{p}$, then $\Ng$ is not Hausdorff.
\end{proposition}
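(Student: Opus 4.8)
The plan is to turn the infinitesimal non-convexity at $p$ into a single sequence of cone geodesics of $\mathring{M}$ possessing two distinct limits in $\Ng$; since limits of sequences are unique in Hausdorff spaces, this forces $\Ng$ to be non-Hausdorff.

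First I would record the geometric meaning of the hypothesis. Let $\gamma$ be the lightlike geodesic with $\gamma(0)=p$ and $\dot\gamma(0)=w\in\lc{p}\cap T_p\partial M$. In an adapted chart, combining \eqref{eq:71bis2} with \eqref{eq:4} gives $\ddot\gamma^{n+1}(0)=\mathrm{Hess}^{w}_{x^{n+1}}(w,w)=-II^{w}(w,w)\,dx^{n+1}(\eta)>0$, while $w\in T_p\partial M$ yields $\gamma^{n+1}(0)=\dot\gamma^{n+1}(0)=0$. Hence $\gamma^{n+1}$ attains a strict local minimum $0$ at $t=0$, so (as in Proposition \ref{prop_strictconv}) $\gamma$ meets $\partial M$ only at $p$ and lies in $\mathring{M}$ for $0<|t|<\epsilon$. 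Removing the single boundary point $p$ therefore splits $\gamma$ into two inextendible cone geodesics of $\mathring{M}$: $\gamma^{+}$, carrying the velocities $\dot\gamma(t)$ with $t>0$, and $\gamma^{-}$, carrying those with $t<0$. Strong causality at $p$ (in particular causality near $p$, which rules out $\gamma$ closing up) guarantees that these are genuinely different leaves, i.e. $[\gamma^{+}]\neq[\gamma^{-}]$ in $\Ng$.

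Next I would manufacture a sequence of cone geodesics of $\mathring{M}$ that shadow $\gamma$ across the tangency. Take base points $q_s\in\mathring{M}$ with $x^{n+1}(q_s)=s>0$, $q_s\to p$, and lightlike vectors $v_s\in\lc{q_s}$ with $v_s^{n+1}=0$ and $v_s\to w$ (these exist by continuity of $\lc{}$), and let $\sigma_s$ be the lightlike geodesic with $\sigma_s(0)=q_s$, $\dot\sigma_s(0)=v_s$. By continuous dependence on initial data, $\sigma_s\to\gamma$ in $C^{2}$ on a fixed interval $[-\epsilon,\epsilon]$. Since $\sigma_s^{n+1}(0)=s>0$, $\dot\sigma_s^{n+1}(0)=0$ and $\ddot\sigma_s^{n+1}(0)\to\ddot\gamma^{n+1}(0)>0$, a Taylor estimate near $t=0$ together with the uniform positivity of $\gamma^{n+1}$ on $\rho\le|t|\le\epsilon$ shows $\sigma_s^{n+1}>0$ throughout $[-\epsilon,\epsilon]$ for all small $s$; thus each $\sigma_s$ stays in $\mathring{M}$ there and defines a genuine cone geodesic. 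Put $\sigma_k:=\sigma_{s_k}$ with $s_k\searrow 0$.

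Finally I would conclude by continuity of the quotient projection $\Pi\colon T\lc{}\to\Ng$. Fix $t^{+}\in(0,\epsilon)$ and $t^{-}\in(-\epsilon,0)$; the velocities of a single geodesic lie on one leaf, so $\Pi(\dot\sigma_k(t^{+}))=\Pi(\dot\sigma_k(t^{-}))=[\sigma_k]$. As $\dot\sigma_k(t^{\pm})\to\dot\gamma(t^{\pm})$ in $T\lc{}$ and $\Pi$ is continuous for the quotient topology, $[\sigma_k]\to\Pi(\dot\gamma(t^{+}))=[\gamma^{+}]$ and at the same time $[\sigma_k]\to\Pi(\dot\gamma(t^{-}))=[\gamma^{-}]$. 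The one sequence $\{[\sigma_k]\}$ thus has two distinct limits, which is impossible if $\Ng$ is Hausdorff. The crux of the argument—and the only place where $II^{w}(w,w)<0$ is essential—is the middle step: one must keep the perturbed geodesics strictly inside $\mathring{M}$ as they cross the point where $\gamma$ grazes $\partial M$, which works precisely because $\ddot\gamma^{n+1}(0)>0$ bends $\gamma^{n+1}$ upwards; strong causality is then what secures $[\gamma^{+}]\neq[\gamma^{-}]$.
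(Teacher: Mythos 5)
Your proof is correct and follows essentially the same route as the paper: split $\gamma$ at the tangency point $p$ into two distinct inextendible cone geodesics of $\mathring{M}$ (distinctness secured by strong causality), then exhibit a sequence of interior lightlike geodesics whose velocities converge to points on both leaves. If anything, your middle step (choosing $v_s$ with $dx^{n+1}(v_s)=0$ and running the Taylor estimate to keep $\sigma_s$ inside $\mathring{M}$) spells out a detail the paper leaves implicit.
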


\begin{proof}
    Consider the geodesic $\gamma$ with $w=\dot \gamma(0)$ as in the proofs of Proposition \ref{prop_strictconv} and Theorem~\ref{thm:main2:4} (second part), defined in a small adapted neighborhood  $U$ of $p$ such that all the inextensible causal curves abandon $U$ and do not come back $U$ again.  Then $\gamma\setminus \{p\}$ yields two null inextensible geodesics in $\mathring{M}$ which are distinct because of the properties of $U$, and any sequence of lightlike geodesics $\{\gamma_k\}$ in $U$ with $\lim_{k\rightarrow \infty}\dot \gamma_k(0)=\dot \gamma(0)$ shows non-Hausdorfness.
\end{proof}

\begin{proposition}\label{p_Haus}
  $\Ng$ is Hausdorff for any lightconvex  globally hyperbolic Finsler spacetime-with-timelike-boundary. 
\end{proposition}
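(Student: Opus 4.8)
The plan is to prove Hausdorffness by combining the structural results already established: that lightconvexity forbids the pathology of Proposition \ref{p_noHaus}, and that global hyperbolicity provides enough control to separate any two distinct cone geodesics. Since the spacetime is strongly causal (global hyperbolicity implies it), Proposition \ref{prop:manifoldfoliation} already gives that $\Ng$ is a possibly non-Hausdorff $(2n-1)$-manifold with a sequential topology, so it suffices to show that limits of convergent sequences of cone geodesics are unique. Thus I would argue by contradiction: suppose a sequence $\{\gamma_k\}$ of cone geodesics in $\mathring{M}$ converges simultaneously to two distinct cone geodesics $\gamma_\infty$ and $\bar\gamma_\infty$, and derive a contradiction.

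First I would fix a smooth spacelike Cauchy hypersurface $S$ of the globally hyperbolic manifold-with-boundary $M$ (available via the splitting from \cite{RMI_AFS} invoked in the introduction), and use it to normalize representatives: each inextendible lightlike pregeodesic $\gamma_k$ in $\mathring{M}$ that meets $S$ can be parametrized so that $\gamma_k(0)\in S$ with $\langle\cdot,\cdot\rangle$-unit velocity, pinning down a unique representative and making the convergence $\gamma_k'(0)\to\gamma_\infty'(0)$ concrete on the compact unit bundle over $S$. The non-Hausdorff pathology forces the two limits $\gamma_\infty,\bar\gamma_\infty$ to agree on an initial portion and then bifurcate, which can only happen if the common sequence $\gamma_k'(0)$ has two distinct accumulation points in $T\lc{}$ projecting to the same leaf-limit data along $S$; by strong causality and the openness of the foliation (Proposition \ref{p_Rui}), such a splitting of the limit is precisely the failure of the equivalence relation $\sim$ to have closed graph. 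The crux is then to locate where the two limit geodesics separate.

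The key step is to show that the bifurcation point must occur on the boundary $\partial M$. Here I would use global hyperbolicity: the set $J^+(\gamma_\infty(0))\cap J^-(q)$ for a suitable point $q$ is compact, which confines the geodesics $\gamma_k$ to a compact region and guarantees that, after passing to a subsequence, they converge uniformly with their velocities on compact parameter intervals to an inextendible causal limit curve. Two distinct limits $\gamma_\infty\neq\bar\gamma_\infty$ emerging from the same sequence can only arise if the limiting velocity fails to be well defined at some parameter, i.e. if the limit curve develops a corner. In the interior $\mathring{M}$, uniqueness of geodesics with given initial data (the geodesic spray is a well-defined second-order ODE) rules this out, so the corner must sit at a point $r\in\partial M$ where the curve is tangent to $\partial M$ — exactly the configuration producing a velocity jump $\Delta v$ pointing inwards, as analyzed in the proof of Theorem \ref{thm:main2:4}. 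The main obstacle will be to make this limit argument rigorous at the boundary, controlling the $H^1$ (locally Lipschitz) velocities and showing that the only way the limit can branch is through such a boundary-tangency corner.

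Finally I would invoke lightconvexity to close the argument. By Theorem \ref{t2}'s ingredient Theorem \ref{thm:main2:4}, lightconvexity of $\partial M$ is equivalent to local lightconvexity, so any lightlike geodesic tangent to $\partial M$ is contained in $\partial M$; equivalently, by Proposition \ref{prop_strictconv} and the infinitesimal characterization, no $w\in\lc{p}\cap T_p\partial M$ satisfies $II^w(w,w)<0$. This forbids precisely the inward-pointing velocity jump $\Delta v$ that a branching corner at $r\in\partial M$ would require, since such a jump is exactly the scenario of Proposition \ref{p_noHaus}, whose hypothesis $II^w(w,w)<0$ is now excluded. Hence no bifurcation can occur, the limit is unique, the graph of $\sim$ is closed, and by Proposition \ref{p_Rui} the lightspace $\Ng$ is Hausdorff, completing the proof.
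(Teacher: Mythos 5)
Your overall strategy (argue by contradiction with two limits, confine the geodesics using compact causal diamonds, and bring in lightconvexity at the boundary) points in the right direction, but the central step is flawed. You assert that two distinct limits ``can only arise if the limit curve develops a corner'' at a boundary-tangency point with an inward velocity jump $\Delta v$, and that this ``is exactly the scenario of Proposition \ref{p_noHaus}''. Neither claim holds. First, the two limit geodesics $\gamma_1,\gamma_2$ need not be joined at a single corner point: the limit of the arcs $\gamma_m|_{[t_m,s_m]}$ is only guaranteed to be a causal curve of $M$ which is geodesic while it stays in $\mathring{M}$, and between the $\gamma_1$-portion and the $\gamma_2$-portion it may a priori run along the timelike boundary; the endpoints of $\gamma_1$ and $\gamma_2$ on $\partial M$ can be distinct points (compare Figure \ref{fig:3}, where they are). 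Second, Proposition \ref{p_noHaus} involves no velocity jump at all: its geodesic is smooth, tangent to $\partial M$, with $II^w(w,w)<0$. A corner with $v_-$ pointing outwards and $v_+$ pointing inwards involves vectors transversal to $\partial M$, about which infinitesimal lightconvexity (a condition on lightlike vectors \emph{tangent} to $\partial M$) says nothing; in the proof of Theorem \ref{thm:main2:4} such corners are excluded by a timelike-variation argument, not by the sign of $II$. So your final step does not close the argument.

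The missing idea --- and the way the paper actually proceeds --- is simpler. By compactness of $J(p,q)$, the future half of $\gamma_1$ (resp.\ the past half of $\gamma_2$) is trapped in a compact subset of $M$, so, being inextendible in $\mathring{M}$, it must have a future (resp.\ past) endpoint $p_1$ (resp.\ $p_2$) on $\partial M$. Lightconvexity enters exactly once, through Theorem \ref{prop:main2:4}: if $\gamma_1$ arrived at $p_1$ tangentially to $\partial M$ it would be contained in $\partial M$, which is impossible for a geodesic of $\mathring{M}$, hence $\gamma_1'$ points outwards at $p_1$. Continuous dependence on initial conditions (in an extension $\widehat{M}$) then forces $\gamma_m$ to cross $\partial M$ transversally near $p_1$ for large $m$, i.e.\ to terminate there as a curve of $\mathring{M}$, so it cannot go on to approximate $\gamma_2$ at the later parameters $s_m$ --- the desired contradiction. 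A smaller issue: your normalization on a Cauchy hypersurface $S$ does not cover all cone geodesics, since a cone geodesic of $\mathring{M}$ need not meet $S$ (it may enter and leave through $J^+(S)\cap\partial M$); this is precisely why Lemma \ref{lemma:hypersurface} keeps the extra pieces $\ell^{\pm}_{\partial S}$.
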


\begin{proof}
Assume by contradiction that there exists a sequence of 
 cone  geodesics $\{\gamma_m\}$, $m>3$,  converging to other two  $\gamma_1, \gamma_2$. Let $t_m\rightarrow t_\infty, s_m\rightarrow s_\infty, t_\infty < s_\infty$ such that $\gamma'_{m}(t_m) \rightarrow \gamma'_1(t_\infty), \gamma'_{m}(s_m) \rightarrow \gamma'_2(s_\infty)$ and choose $p,q\in M$ with $p\ll \gamma_1(t_\infty), \gamma_2(s_\infty) \ll q$. Then, 
$\gamma_m(t_m), \gamma_m(s_m) \in J(p,q)$  for $m$ big enough (as $p\ll \gamma_m(t_m)$, $\gamma_m(s_m)\ll q$ and $\gamma_m(t_m)\leq \gamma_m(s_m)$)  and the piece of $\gamma_1$ starting at $t_\infty$ (resp. of $\gamma_2$ ending at $s_\infty$) 
is included in the compact subset $J(p,q)$ of $M$.  Necessarily, $\gamma_1$ has an endpoint $p_1$ at $\partial M$ and  $\gamma_2$ an initial point $p_2 \in \partial M$, as both are inextensible in $\mathring{M}$. The lightconvexity of $\partial M$ makes   $\gamma'_1$ to point out  outwards at $p_1$ (and $\gamma'_2$ inwards  at $p_2$). 
Thus, the uniform convergence on compact subsets of the sequence (on an extension $\widehat{M}$ of the spacetime $M$ beyond, say, $p_1$) makes $\gamma_m$ to arrive $\partial M$ for large $m$ too, a contradiction. 
\end{proof}

\subsection{Structure of $\Ng$ for globally hyp. Finsler spacetimes-with-timelike-boundary}
\label{s_4.2}
It is known that the space of 
cone geodesics $\Ng$ 
of a (Lorentz) globally hyperbolic $(n+1)$-spacetime without boundary has a natural structure of smooth manifold of dimension $2n-1$ obtained as a projectivization of the cone bundle of any of its smooth spacelike Cauchy hypersufaces $S$ \cite{Low_1989} (the existence of such a $S$ ensured by \cite{BS03}). Moreover, a natural representation of this space can be obtained as the unit sphere bundle $U S$ 
 as follows. Take the future-directed unit timelike vector normal to $S$. Each 
future directed lightlike geodesic $\gamma$ can be identified with the unique vector $v_p\in U S$ such that, 
when $\gamma$ crosses $S$ at a point $p$, its velocity is proportional to $\tmv{p}+v_p$.  

This can be extended directly to the Lorentz-Finsler setting as it is known that they also admit smooth spacelike Cauchy hypersurfaces\footnote{\label{foot} This was done by Fathi and Siconolfi \cite{Fathi2011}, including the existence of a Cauchy temporal function. The possibility to extend the original arguments by Bernal and S\'anchez \cite{BS03, BS05, BS07} from the Lorentzian case to the Lorentz–Finsler one was first outlined in \cite{Sa_Penrose}, and a full proof (including the extension to the case with boundary in \cite{RMI_AFS}) can be found in \cite[Theorem 3.3]{Sa-BIRS}.}. 
In fact,  consider a Finsler spacetime without boundary and recall the  construction with a triple $(\Omega,\tmv{},F)$ for the associated cone structure in Remark \ref{r_conetriple}.

\begin{theorem}\label{t_4.4}
Let $(M,L,A)$ be a globally hyperbolic Finsler ($n+1$)-spacetime without boundary.
Then, its space of lightlike geodesics $\Ng$
has a natural structure of smooth manifold of dimension $2n-1$ obtained as a projectivization of the cone bundle on any of its (smooth) spacelike Cauchy hypersufaces $S$.  
Thus, $\Ng$ is naturally identifiable with the unit sphere bundle (indicatrix) for a Finsler metric $F$ on 
$S$. 
\end{theorem}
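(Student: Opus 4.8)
The plan is to exhibit the smooth manifold structure concretely by realizing $\Ng$ as a global cross-section of the foliation $\mathbf{F}$ sitting over $S$, and then to identify this cross-section with the indicatrix bundle of a Finsler metric on $S$. Since global hyperbolicity implies strong causality, Proposition \ref{prop:manifoldfoliation} already endows $\Ng$ with a (possibly non-Hausdorff) smooth $(2n-1)$-manifold structure for which the quotient projection $\Pi_{\mathbf{F}}:T\lc{}\rightarrow\Ng$ is a submersion; the whole point is thus to describe this structure in terms of $S$. First I would fix a smooth spacelike Cauchy hypersurface $S$, whose existence (together with a Cauchy temporal function splitting $M\cong\R\times S$) is guaranteed in the Lorentz-Finsler setting as recalled above.

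The heart of the argument is the following cross-section claim. Consider the set $\lc{}|_S:=\lc{}\cap\pi^{-1}(S)$ of lightlike vectors based at points of $S$, a smooth $2n$-dimensional submanifold of $T\lc{}$. I claim it is a cross-section of $\mathbf{F}$ modulo the radial field $\mathcal{R}$. Indeed, a leaf of $\mathbf{F}$ is the full $\mathrm{Span}\{\mathcal{R},G\}$-orbit of a lightlike vector, i.e.\ an inextendible lightlike pregeodesic $\gamma$ together with all its positive reparametrizations. Being an inextendible causal curve, $\gamma$ meets the Cauchy hypersurface $S$; since Cauchy hypersurfaces are acausal it meets $S$ exactly once, and since $S$ is spacelike while $\dot\gamma$ is lightlike the crossing is transversal. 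Consequently each leaf meets $\lc{}|_S$ in exactly one radial ray $\R^+\dot\gamma(t_0)$, where $t_0$ is the crossing parameter. Transversality of the crossing means precisely that $G$ is transverse to $\lc{}|_S$, while $\mathcal{R}$ is tangent to it (rescaling preserves the base point). Hence $\mathbb{P}(\lc{}|_S):=\lc{}|_S/\mathcal{R}$ is a smooth transversal to $\mathbf{F}$ of dimension $2n-1$ meeting every leaf exactly once, so the induced map $\mathbb{P}(\lc{}|_S)\rightarrow\Ng$ is a diffeomorphism. This realizes $\Ng$ as the projectivization of the cone bundle over $S$.

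It remains to identify $\mathbb{P}(\lc{}|_S)$ with an indicatrix bundle. For this I would choose a cone triple $(\Omega,\tmv{},F)$ as in Remark \ref{r_conetriple} adapted to $S$, so that $\ker\Omega|_S=TS$ and $\tmv{}$ is the future-directed field transverse to $S$ (e.g.\ take $\Omega$ proportional to the differential of the temporal function and $\tmv{}$ proportional to the associated time direction, normalized by $\Omega(\tmv{})\equiv1$). Every future-directed $v\in\lc{p}$, $p\in S$, then decomposes as $v=\Omega(v)\tmv{p}+\Pi(v)$ with $\Pi(v)\in T_pS$ and $F(\Pi(v))=\Omega(v)$; normalizing to $\Omega(v)=1$ selects a unique representative of its radial ray and sends it to the $F$-unit vector $\Pi(v)\in T_pS$. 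This yields a fibre-preserving diffeomorphism $\mathbb{P}(\lc{}|_S)\rightarrow\{w\in TS:F(w)=1\}=US$ onto the indicatrix (unit sphere) bundle of $F|_{TS}$, which composed with the previous identification gives the asserted $\Ng\cong US$.

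Smoothness of the composite in both directions is routine: the forward map factors through the submersion $\Pi_{\mathbf{F}}$ and the smooth evaluation-at-$S$ map, while the inverse is smooth because a point of $US$ determines smooth lightlike initial data at $p\in S$ whose geodesic-spray orbit depends smoothly on it. As a by-product, since $US$ is Hausdorff, this re-derives the Hausdorffness of $\Ng$ in the boundaryless case, consistent with Proposition \ref{p_Haus}. The main obstacle is the cross-section claim: the delicate points are that each leaf meets $\lc{}|_S$ in \emph{exactly} one ray and \emph{transversally}, which is exactly where global hyperbolicity (Cauchy property, acausality and spacelikeness of $S$) is indispensable; for merely strongly causal cone structures a leaf may fail to meet $S$, or meet it more than once, which is the mechanism behind the non-Hausdorff examples and shows why this clean global description breaks down below global hyperbolicity.
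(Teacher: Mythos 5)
Your proof is correct and follows essentially the same route as the paper: the identification of the projectivized cone bundle over $S$ with the indicatrix of a Finsler metric via a cone triple $(\Omega,\tmv{},F)$ with $\mathrm{Ker}\,\Omega|_S=TS$ is exactly the paper's argument (the paper picks the canonical $\tmv{p}$ as the tangency point of $L_p^{-1}(1)$ with the affine hyperplane parallel to $T_pS$, whereas you allow any admissible triple, which is equally valid). The only difference is that you additionally write out the cross-section argument --- each leaf of $\mathrm{Span}\{\mathcal{R},G\}$ meets $\lc{}\cap\pi^{-1}(S)$ in exactly one radial ray, and transversally --- which the paper simply imports from Low's construction; that part of your argument is also correct.
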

\begin{proof}
As $S$ is spacelike, at each $p\in S$ there is a unique $1$-form $\Omega_p$ and a unique timelike unit vector $\tmv{p}$ such that $\mathrm{Ker}\Omega_p=T_pS$ and $\Omega_p(\tmv{p})=1$. Indeed, $\tmv{p}$ is the tangency point at $L_p^{-1}(1)$ of a unique  affine hyperplane  in $T_pM$ parallel to $T_pS$, and this hyperplane  becomes $\Omega_p^{-1}(1)$. The intersection of $\Omega_p^{-1}(1)$   
with the ligthlike cone $\lc{p}$ 
is a smooth compact strongly convex hypersurface of $\lc{p}$  whose projection on $T_pS$ in the direction of $\tmv{p}$ gives the required indicatrix for a norm $F_p$ on $T_pS$ and, thus, the required Finsler metric $F$ (see also \cite[Theorem 2.17]{JS20}).
\end{proof}

In order to extend this result to the case with timelike boundary, the global splitting for  the Lorentzian case \cite{RMI_AFS} (which can be extended to the Lorentz-Finsler case, see footnote \ref{foot}) will be used. Indeed, this splitting will permit to introduce  a parametrization of  both $\Ng$ and an appropriate smooth Hausdorff (2n-1)-manifold (without boundary), providing atlases for both spaces and  a natural identification between them. 

\begin{convention}\label{convention} {\em From the discussion above, we can assume that any globally hyperbolic Finsler spacetime-with-timelike-boundary is written as a product
\begin{equation}\label{t_globalsplitting}
M=I\times S_0, \qquad \hbox{where} \; I\subset \R \; \hbox{is an interval  } 
\end{equation} where  the natural projection $t=I\times S_0\rightarrow I$ has levels $t=$ constant which are  
spacelike Cauchy hypersurface  (with boundary). 
 One can assume  $I\equiv  \R$  by rescaling $t$ (which would be then called a {\em Cauchy temporal function}), however, the above generality is useful in specific  examples. Once this splitting is prescribed, $S$ will denote a (possibly different) choice of spacelike Cauchy hypersurface. In any case, $\partial M$ becomes diffeomorphic to $\R \times \partial S$.}
\end{convention}
 \begin{definition}\label{d_ll}
 For any spacelike Cauchy hypersurface $S$ consider  the following open subsets of  the  projectivizations of the cones $\lc{}$ and $-\lc{}$ restricted to some hypersurfaces, endowed with their natural topologies:
\begin{itemize}
\item $\ell^+_S:= \ell_{\partial S}^+ \cup \ell_{\mathring{S}}^+$, where
\begin{itemize}
 \item $\ell_{\mathring{S}}^+:=\{\R^+\cdot v_p:  v_p\in \lc{p}  \;  \hbox{and} \;  p\in \mathring{S}\}, $
 
 \item $\ell_{\partial S}^+:=\{\R^+\cdot v_p:  v_p\in \lc{p}, \, \hbox{points inwards}  \; \partial M \, \hbox{and} \;  p\in J^+(\withboundaryS)\cap \partial M\}, $

\end{itemize}
\item $\ell^-_S:= \ell_{\partial S}^- \cup \ell_{\mathring{S}}^-$, where
\begin{itemize}
 
 \item $\ell_{\mathring{S}}^-:=\{\R^+\cdot v_p:  -v_p  \in \lc{p} \;  \hbox{and} \;  p\in \mathring{S}\}, $

 \item $\ell_{\partial S}^-:=\{\R^+\cdot v_p:  -v_p\in \lc{p}, \; \hbox{points outwards}  \, \partial M \, \hbox{and} \;  p\in J^-(\withboundaryS)\cap \partial M\}. $   
 
\end{itemize}
\end{itemize}
Moreover, consider the open subsets (independent of the choice of $S$) 
\begin{itemize}
\item $ \ell_{\partial M}^+:=\{\R^+\cdot v_p:  v_p\in \lc{p}, \, \hbox{points inwards}  \; \partial M \, \hbox{and} \;  p\in \partial M\} 
    . $
\item $\ell_{\partial M}^-:=\{\R^+\cdot v_p:  -v_p\in \lc{p}, \; \hbox{points outwards}  \, \partial M \, \hbox{and} \;  p\in \partial M\}$
\end{itemize}    
 \end{definition}
The manifold structure of $\ell_{\mathring{S}}^\pm$ as a sphere bundle on  $\mathring{S}$ is well known. Next, we consider the other pieces $\ell_{\partial S}^\pm$ of $\ell^\pm_S$ as well as $\ell_{\partial M}^\pm$.
\begin{proposition}\label{p_ell}  (1) $\ell_{\partial M}^+$ (resp. $\ell_{\partial M}^-$) is diffeomorphic to $\R\times T\, \partial S$.


(2)    $\ell_{\partial S}^+$ (resp. $\ell_{\partial S}^-$) is diffeomorphic to $[0,\infty) \times T\, \partial S$.
\end{proposition}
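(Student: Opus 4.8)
The plan is to realize both sets as fibre bundles of inward (resp.\ outward) lightlike directions over suitable pieces of $\partial M$, and to trivialize the fibre by a radial projection onto $T\,\partial S$. Throughout I use the global splitting $M=\R\times S_0$ of Convention~\ref{convention}, so that $\partial M=\R\times \partial S_0$, the field $\tmv{}=\partial_t$ is tangent to $\partial M$, and each slice $S_0$ is a spacelike Cauchy hypersurface with $\ker dt_p=T_pS_0$. At a point $p=(t_0,s_0)\in\partial M$ the construction in the proof of Theorem~\ref{t_4.4} is purely pointwise and remains valid on $\partial M$: future lightlike directions $\R^+\cdot v_p$ are in bijection with the indicatrix $\Sigma_{s_0}:=\{w\in T_{s_0}S_0: F_{s_0}(w)=1\}$ of the associated spatial Finsler metric $F$, via $v_p\propto \tmv{p}+w$. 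Fixing an inward transverse field $e_\eta$ along $\partial S_0$ and the splitting $T_{s_0}S_0=T_{s_0}\partial S_0\oplus\R\,e_\eta$, and using that $\tmv{p}\in T_p\partial M$, the inward directions correspond exactly to the open cap $\Sigma_{s_0}^+:=\{w\in\Sigma_{s_0}: w^\eta>0\}$.

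For part (1) I would trivialize this cap by the radial projection $w\mapsto w/w^\eta$ onto the affine hyperplane $\{w^\eta=1\}\cong T_{s_0}\partial S_0$. Because $\Sigma_{s_0}$ is the unit sphere of the Lorentz-Minkowski norm $F_{s_0}$ (cf.\ the strong convexity in Definition~\ref{def:main2:4}), each ray from the origin meets it exactly once; hence the rays contained in the open half-space $\{w^\eta>0\}$ are in smooth bijection both with $\Sigma_{s_0}^+$ and with $\{w^\eta=1\}$, so the radial map is a fibrewise diffeomorphism onto $T_{s_0}\partial S_0$. These maps depend smoothly on $(t_0,s_0)$ and, by the global product structure, assemble into a diffeomorphism $\Phi:\ell^+_{\partial M}\to \R\times T\,\partial S_0\cong \R\times T\,\partial S$, the first factor being $t_0$. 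The statement for $\ell^-_{\partial M}$ is identical after replacing future by past directions (i.e.\ $-v_p\in\lc{p}$) and the inward cap by the outward one $\{w^\eta<0\}$.

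For part (2) I would take the splitting adapted to the given $S$, arranging $S=\{t=0\}$ (a spacelike Cauchy level of the Cauchy temporal function, allowed by the splitting of \cite{RMI_AFS}); then $\partial S=S\cap\partial M=\{0\}\times\partial S_0$. Since $t$ is a Cauchy temporal function and every past-inextendible causal curve through a point with $t>0$ meets the Cauchy slice $\{t=0\}$, one gets $J^+(S)\cap\partial M=\{t\ge 0\}\cap\partial M\cong[0,\infty)\times\partial S$ as a manifold with boundary $\partial S$. Restricting the bundle diffeomorphism $\Phi$ of part (1) to this base yields $\ell^+_{\partial S}=\Phi^{-1}\big([0,\infty)\times T\,\partial S\big)$, hence a diffeomorphism onto $[0,\infty)\times T\,\partial S$; the case $\ell^-_{\partial S}$ is dual, using $J^-(S)\cap\partial M=\{t\le 0\}\cap\partial M$ and the outward past directions.

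The main obstacle is the fibrewise step: one must verify that the inward lightlike directions form a genuine trivial bundle with fibre $T\,\partial S$, which hinges on the norm (strong convexity) structure of the cones—so that each ray is met once by the indicatrix and the radial projection becomes a global fibrewise diffeomorphism—together with the smooth dependence on the base point guaranteed by the product splitting. A secondary technical point is the identification $J^\pm(S)\cap\partial M\cong[0,\infty)\times\partial S$ as a smooth manifold with boundary, which relies on $\partial S$ being a Cauchy hypersurface of the timelike boundary and on the Cauchy-temporal splitting of \cite{RMI_AFS}.
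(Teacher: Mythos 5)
Your proposal is correct and takes essentially the same route as the paper: both identify the inward lightlike directions at $p=(t,x)\in\partial M$ with the inner-pointing directions of $T_xS$ by projecting along $\partial_t$ (equivalently, with the open half of the indicatrix of the associated Finsler metric $F$), and then trivialize that open half-sphere over $T_x\partial S$ via a radial/affine normalization with respect to an inward transverse field $\eta$, globalized using the product splitting. Part (2) is likewise handled in the paper exactly as you do, by taking $S$ as the $t=0$ level of a Cauchy temporal function so that $J^+(S)\cap\partial M\cong[0,\infty)\times\partial S$ and repeating the argument of (1).
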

\begin{proof}          
(1) Assume  $I=\R$, $\partial M=\R\times \partial S$  as in Convention \ref{convention}. For each $p=(t,x)\in \partial M$, the  vectors  $v_p\in \lc{p}$ 
     pointing inwards project naturally onto inner pointing vectors of $\{t\}\times T_x S$.  Thus, the inwards directions of $\lc{p}$  turns out naturally diffeomorphic to the projectivization of 
    the  directions of $T_x S$ inner pointing respect to $\{t\} \times T_x\partial S$ (topologically an  open half-sphere\footnote{Choosing a cone triple $(\Omega,  \mathbb{T}, F)$ with $\Omega=dt$ and $\mathbb{T}$ tangent to $\partial M$, 
      those vectors with $dt(v_p)=1$ project on an open  half of the indicatrix of $F$ at $p$.} 
    $\mathbb{S}^{n-1}_+$, and just one point in the limit case $n=1$). The latter is naturally diffeomorphic to $T_x \partial S$. In fact, 
    choosing $\eta_x \in T_xS$   transverse to $\partial S$ and inner pointing, one has the natural identification
$$ T_x\partial S \ni \quad u_x \; \mapsto \; \R^+ \cdot (u_x +\eta_x) \quad \in \{ \hbox{inner pointing directions of} \;  T_xS
\}
$$    
This construction at each $p$ can be globalized  as an inner pointing vector field  $\eta$ transverse to $\partial S$ can be chosen and extended  to the product $\R\times \partial S$, providing then  a diffeomorphism between the inner pointing directions of $\R^+ \cdot  \lc{}$ in $\partial M$  and $\R\times T\, \partial S$.

(2) Any spacelike Cauchy hypersurface $S$ can be regarded as the level $t=0$ of a temporal function $t$ (see \cite{BS07} and footnote \ref{foot}) and, thus, $M$ splits as in Convention $\ref{convention}$ with $S=S_0$. Thus $J^+(S)\cap \partial M$ becomes difeomorphic to the product $[0,\infty)\times S$ and the proof follows as in  case~(1).
\end{proof}

Next, we will see that  $\ell^\pm_S$ are homeomorphic to two open subsets $\Ng^\pm_{S}$  (depending on the Cauchy hypersurface $S$) of $\Ng$ 
and the union  of these two pieces of $\ell^+_S$ (resp. $\ell^-_S$) 
also inherits a natural differentiable structure.  
\begin{lemma}\label{lemma:hypersurface} For any  globally hyperbolic Finsler spacetime $(\withboundaryM,L,\withboundaryA)$ with lightconvex  timelike-boundary, there is a  natural bijection between $\ell^+_S$ (resp. $\ell^-_S$)
 and the open subset $\Ng_S^+$ (resp. $\Ng_S^-$) of the lightspace $\Ng$  
of $\mathring{M}$ whose cone geodesics have  at least one point at $I^+(S)$ (resp. $I^-(S)$).  Moreover, $\ell^+_S$ (resp. $\ell^-_S$)
admits  a natural smooth differentiable structure as a 
(Hausdorff) manifold of dimension  $(2n-1)$ diffeomorphic to $\Ng_S^+$ (resp. $\Ng_S^-$).     
\end{lemma}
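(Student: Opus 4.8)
The plan is to establish the bijection first at the level of point sets, using the global splitting, and only afterwards upgrade it to a diffeomorphism by transporting the smooth structure that $\Ng$ already carries by Propositions \ref{prop:manifoldfoliation} and \ref{p_Haus}.

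\textbf{Step 1 (the bijection).} I fix the splitting $M=\R\times S_0$ of Convention \ref{convention} with $S=\{t=0\}$, so that the time function $t$ is strictly increasing along every future-directed causal curve and $S$ is acausal. Given a cone geodesic $\gamma\in\Ng$ meeting $I^+(S)$, I follow it towards the past. Since $\gamma\subset\mathring M$ and $t$ is monotone, exactly one of two mutually exclusive situations occurs: either $\gamma$ meets the level $\{t=0\}$, necessarily at a single interior point $p\in\mathring S$ (acausality of $S$ together with $\gamma\subset\mathring M$), or $\gamma$ never reaches $\{t=0\}$ and is past-inextendible in $\mathring M$, in which case global hyperbolicity of $M$ (compactness of causal diamonds) forces a past endpoint $p\in\partial M$ with $t(p)\geq 0$, i.e.\ $p\in J^+(S)\cap\partial M$. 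In the first case $\gamma$ determines the element $\R^+\dot\gamma(p)\in\ell^+_{\mathring S}$; in the second, lightconvexity guarantees that $\gamma$ meets $\partial M$ transversally (a tangential contact would force $\gamma\subset\partial M$ by Theorem \ref{thm:main2:4}), so $\dot\gamma(p)$ points inwards and yields an element of $\ell^+_{\partial S}$. Conversely, each direction in $\ell^+_S$ is flowed, as far as possible inside $\mathring M$, to a unique such cone geodesic. This defines the announced bijection $\Phi\colon\ell^+_S\to\Ng^+_S$, and the case distinction matches exactly the decomposition $\ell^+_S=\ell^+_{\mathring S}\cup\ell^+_{\partial S}$.

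\textbf{Step 2 (the smooth structure on the target).} Under our hypotheses $\Ng$ is a smooth Hausdorff $(2n-1)$-manifold (Propositions \ref{prop:manifoldfoliation} and \ref{p_Haus}). The subset $\Ng^+_S$ is open: if $\gamma_k\to\gamma_\infty$ in the limit-operator topology of \S\ref{subsub_dif_est_n} and $\gamma_\infty$ has a point in the open set $I^+(S)$, then the converging velocities place $\gamma_k$ in $I^+(S)$ for large $k$; since this topology is sequential, $\Ng^+_S$ is open and hence a smooth $(2n-1)$-manifold in its own right.

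\textbf{Step 3 (piecewise diffeomorphism and naturality).} On $\ell^+_{\mathring S}$, $\Phi$ is exactly Low's construction (Theorem \ref{t_4.4}) restricted to the interior: compose the inclusion of the projectivized future cone over $\mathring S$ into $T\lc{}$ with the quotient submersion $T\lc{}\to\Ng$; acausality of $S$ makes each leaf meet this section at most once, so $\Phi$ is a smooth open embedding there, carrying the sphere-bundle structure of $\ell^+_{\mathring S}$ to an open subset of $\Ng^+_S$. On $\ell^+_{\partial S}$, I flow each inward direction based at $p\in J^+(S)\cap\partial M$ a short affine parameter into $\mathring M$ to reach a genuine interior lightlike vector — a smooth operation by smooth dependence of geodesics on initial data, using that the entry is transversal — and then project; combined with the identification $\ell^+_{\partial S}\cong[0,\infty)\times T\,\partial S$ of Proposition \ref{p_ell}(2), this again gives a diffeomorphism onto an open subset. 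Transporting the smooth structure of $\Ng^+_S$ through $\Phi$ thus restricts on each piece to its evident natural structure, which is the content of ``natural'' in the statement.

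\textbf{Main obstacle.} The delicate point is the interface between the two pieces, namely the locus of cone geodesics passing through the corner $\partial S=S\cap\partial M$, where the sphere fibre of $\ell^+_{\mathring S}$ (over base points tending to $\partial S$) must glue smoothly to the slice $\{0\}\times T\,\partial S$ of $\ell^+_{\partial S}$. One must verify that a sequence of interior-crossing geodesics whose crossing points tend to a boundary point converges in $\Ng$ precisely to the boundary-emanating geodesic, i.e.\ that $\Phi^{-1}$ stays continuous across the corner. This is exactly where lightconvexity is indispensable: it excludes lightlike directions tangent to $\partial M$ (which by Theorem \ref{thm:main2:4} would remain in $\partial M$ and not represent interior cone geodesics), and this exclusion is what makes the open half-sphere of inward directions identify with the full $T\,\partial S$ in Proposition \ref{p_ell} and prevents directions from escaping to a forbidden tangential limit. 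Once continuity across this corner is established, the two charts assemble into the claimed smooth Hausdorff structure and $\Phi$ is a diffeomorphism; the remainder reduces to routine smooth dependence on initial conditions.
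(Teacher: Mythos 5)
Your Step 1 reproduces the paper's argument for the bijection essentially verbatim (follow the cone geodesic to the past; it either crosses $\mathring{S}$ or, by global hyperbolicity, leaves $\mathring M$ through $\partial M$ at a point of $J^+(S)$, transversally by lightconvexity), and your Steps 2--3 correctly handle the two open pieces. The problem is that the part you label ``Main obstacle'' is not an afterthought to be checked --- it is the actual mathematical content of the second assertion of the lemma, and you leave it unproved. You write ``one must verify that $\Phi^{-1}$ stays continuous across the corner'' and then ``once continuity across this corner is established, the two charts assemble into the claimed smooth Hausdorff structure''; but the two evident charts on $\ell^+_{\mathring S}$ and $\ell^+_{\partial S}$ only endow $\ell^+_S$ with a manifold-with-corners structure along $\partial S$ (as the paper points out explicitly), and continuity of $\Phi^{-1}$ across the corner does not by itself produce a smooth chart there, nor does it show that the transported structure is ``natural.'' Also, your claim that the map on $\ell^+_{\partial S}\cong[0,\infty)\times T\,\partial S$ is ``a diffeomorphism onto an open subset'' cannot be right as stated, since $\ell^+_{\partial S}$ has boundary and $\Ng^+_S$ does not.

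The missing idea, which is the heart of the paper's proof, is to use $\ell^+_{\partial M}$ rather than $\ell^+_{\partial S}$ as a local transversal to the geodesic foliation near a corner point. A corner direction $\R^+\cdot v_p$ (with $p\in\partial S$, $v_p$ inward) lies in $\ell^+_{\partial M}$, which by Proposition \ref{p_ell}(1) is a smooth \emph{boundaryless} $(2n-1)$-manifold diffeomorphic to $\R\times T\,\partial S$. Choosing a small neighborhood $U_{\partial M}$ of $v_p$ in $\ell^+_{\partial M}$ consisting of transversal inward directions, every $w_q\in U_{\partial M}$ with $q\in I^-(S)$ flows forward to a crossing of $\mathring S$, while those with $q\in J^+(S)$ already represent points of $\ell^+_{\partial S}$; the geodesic flow therefore maps $U_{\partial M}$ homeomorphically onto a neighborhood of the corner point inside $\ell^+_{\partial S}\cup\ell^+_{\mathring S}$, and the chart of $U_{\partial M}$ (transversal to the plaques of the foliation) is precisely the corner-free smooth chart one needs. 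Without this construction --- or an equivalent explicit chart straddling $\partial S$ --- your proof establishes the bijection and the smooth structure away from the corner, but not the lemma's claim of a natural global smooth structure on $\ell^+_S$ diffeomorphic to $\Ng^+_S$.
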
 

\begin{proof}
Reasoning for \(\ell^+_S\) (for \(\ell^-_S\) would be analogous), trivially each $\R^+\cdot v_p\in \ell^+_S$, determines a unique cone geodesic in $M$, and it intersects $\mathring{M}\cap I^+(S)$. 
Conversely, if $\gamma$ is a future-directed lightlike geodesic of $L$ and, at some point, $\gamma(t_0) \in \mathring{M}\cap I^+(S)$,  going to the past along $\gamma$ it will  cross 
(by the global hyperbolicity of $M$) either $\mathring{S}$, thus 
selecting a  direction in $\ell^+_{\mathring{S}}$ or $\partial M$, in the latter case transversally (by the lightconvexity of $\partial M$), thus 
selecting a  direction of $\ell^+_{ \partial S }$. This provides the required bijection with $\Ng_S^+$, which is clearly an open subset of $\Ng$.

The previous bijection and the existence of a structure of (Hausdorff) manifold for $\Ng$  (Propositions \ref{prop:manifoldfoliation}, \ref{p_Haus})  and, thus, on $\Ng_S^+$, would permit to induce a diffeomorphic structure on $\ell^+_S$. Next, we will show explicitly this structure, checking how it becomes natural for $\ell^+_S$ too. 

Clearly, \(\ell_{ \partial S }^+\) and \(\ell_{\mathring{S}}^+\) are disjoint smooth \((2n-1)\) submanifolds of the projectivization of $TM\setminus \{0\}$,  \(\ell_{ \partial S }^+\) having a boundary. Moreover,  \(\ell_{\mathring{S}}^+\) naturally extends to the points of $\partial S$ as a manifold with 
boundary\footnote{Crucially, for these points 
 all the directions non-pointing out inwards are removed  using lightconvexity. As these removed directions constitute a closed subset in the set of directions, its exclusion does not affect the differentiable structure. Certainly, it does affect to the bundle structure, whose fibers are  naturally a sphere in $\mathring{S}$ but an open half sphere on $\partial S$ (thus even topologically different when $n=1$). The fiber structure is not relevant in this proof, but it helps to understand the whole construction.} too and  matches with \(\ell_{ \partial S}^+ \) therein.  Thus, in principle, the natural identification of the boundary points of both manifolds results in a piecewise smooth manifold  (i.e., a manifold with corners). However, these corners are not natural from the viewpoint of the cone geodesics (indeed, the cornes would disappear by choosing a  Cauchy hypersurface different to $S$), and the following natural smooth reparametrization at the corner points of $\ell^+_S$ emerges naturally.

For   $p\in \partial S$ and $ v_p \in  T    \withboundaryM  \cap \lc{p}$    transversal to $\partial M$  pointing inwards,    consider an open 
neighborhood \(U \subset T\withboundaryM\setminus \{0\} \) around \(v_p\) such that every \( w_q  \in U \cap \lc{q}\) at \(q \in \partial M\) is also transversal  to $\partial M$.   Let \(\ell^+_{\partial M}\) as in Definition \ref{d_ll}, and make also \(U\)  smaller so that \(U_{\partial M}:= \ell^+_{\partial M} \cap U\) admits a smooth chart as an open subset of $\ell^+_{\partial M}$.  
Now,  by taking $U$ sufficiently small\footnote{In order to ensure that the lightlike geodesic associated with  $w_q$ does not intersects again the boundary avoiding to reach $\mathring{S}$.}, any element \(w_q \in U_{\partial M}\) with $q\in I^-(S)$ defines a 
lightlike geodesic that intersects \(\mathring{S}\) at some $q'$, mapping the previous \(w_q\) to an element 
\(w'_{q'} \in \ell^+_{\mathring{S}}\). 
In this way, a homeomorphism between both neighborhoods of $w_p$,  $U_{\partial M}$ in $\ell^+_{\partial M}$ and an open subset of the corner  \(\ell^+_{\partial M} \cup \ell^+_{\mathring{S}}\) is obtained by using the geodesic flow. Thus the differentiable structure of the former (provided by a chart for $U_{\partial M}$ in $\ell^+_{\partial M}$, which is transversal to the plaques of the geodesic foliation) yields the required coordinates free corners. 
\end{proof}
Now, we can  prove the main result on this section.
\begin{theorem}\label{t_Ng_estructuraglobal}
Let $(\withboundaryM^{n+1},L,\withboundaryA)$ be a globally hyperbolic  Finsler spacetime-with-timelike-boundary such that $\partial M$ is infinitesimally lightconvex. Then, its interior is causally simple and the space of its cone geodesics is a  smooth Hausdorff (2n-1)-manifold which can be described as follows. If $\withboundaryS$ is any spacelike Cauchy  hypersurface (with boundary) then:  

\begin{enumerate}[label=(\roman*)]
\item \label{item1thmMiguel} The space $\Ng_S^+$  (resp. $\Ng_S^-$)  of all the cone  geodesics in $\mathring{M}$ reaching $I^+(S)$ (resp. $I^-(S)$) has the structure of a smooth differentiable manifold homeomorphic to $\ell^+_S$ (resp. $\ell^-_S$) and it is diffeomorphic  when the latter is endowed with the differentiable structure described  in the proof of Lemma \ref{lemma:hypersurface}.

\item \label{item2thmMiguel} The space $\Ng$ of all the cone  geodesics  in  $\mathring{M}$  has the structure of a smooth differentiable manifold naturally identifiable to $(\ell^+ \cup \ell^-)/\sim$, where $\sim$ is the relation of  which identifies the open subsets $\ell_{\mathring{S}}^+$ and $\ell_{\mathring{S}}^-$ as

  \begin{equation}
    \label{eq:8}
    \R^+ \cdot  v_p  \sim \R^+\cdot w_p \Longleftrightarrow \R^+\cdot  v_p  = - \R^+\cdot  w_p, \qquad \forall \; \R^+ \cdot v_p \in \ell_{\mathring{S}}^+ \qquad \forall \; \R^+\cdot  w_p\in \ell_{\mathring{S}}^-.
  \end{equation}

\end{enumerate}

 
\end{theorem}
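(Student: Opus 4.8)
The plan is to assemble the statement from the preparatory results, the only genuinely new content being the global gluing in item~\ref{item2thmMiguel}. First I would observe that, since $M$ is globally hyperbolic, it is causally simple with timelike boundary, so Theorem~\ref{thm:main2:4} applies: infinitesimal lightconvexity gives (i)$\Rightarrow$(iii), whence $\mathring{M}$ is causally simple and, in particular, strongly causal (strong causality being inherited by the open subset $\mathring{M}$). Then Proposition~\ref{prop:manifoldfoliation} endows $\Ng$ with the structure of a (possibly non-Hausdorff) smooth $(2n-1)$-manifold, and Proposition~\ref{p_Haus} upgrades it to Hausdorff. Item~\ref{item1thmMiguel} is then nothing but Lemma~\ref{lemma:hypersurface}, which already produces the diffeomorphisms $\Phi^{\pm}\colon \ell^{\pm}_S\to \Ng_S^{\pm}$ together with the claimed differentiable structures; so I would simply invoke it.

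For item~\ref{item2thmMiguel}, the first step is to show that $\{\Ng_S^{+},\Ng_S^{-}\}$ is an open cover of $\Ng$. Fixing the splitting $M=I\times S_0$ of Convention~\ref{convention} with $S=\{0\}\times S_0$ and Cauchy temporal function $t$, one has $I^{+}(S)=\{t>0\}$ and $I^{-}(S)=\{t<0\}$. Given any cone geodesic $\gamma\subset\mathring{M}$, the function $t\circ\gamma$ is strictly monotone (as $t$ is temporal and $\dot\gamma$ is causal), so, writing $t_-=\inf_\gamma t$ and $t_+=\sup_\gamma t$, one has $t_-<t_+$. Were $t_+\le 0$ and $t_-\ge 0$ simultaneously, this would force $t_-\ge 0\ge t_+>t_-$, a contradiction; hence $t_+>0$ or $t_-<0$, i.e.\ $\gamma$ meets $I^{+}(S)$ or $I^{-}(S)$. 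Thus $\Ng=\Ng_S^{+}\cup\Ng_S^{-}$, and since the $\Ng_S^{\pm}$ are open in $\Ng$ by Lemma~\ref{lemma:hypersurface}, this is an open cover.

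The second step identifies the overlap and the transition map. A cone geodesic lies in $\Ng_S^{+}\cap\Ng_S^{-}$ exactly when $t_+>0$ and $t_-<0$, i.e.\ when $0\in(t_-,t_+)$; by the intermediate value theorem it then crosses $\{t=0\}=S$, and since $\gamma\subset\mathring{M}$ the crossing point $p$ lies in $\mathring{S}$, while conversely any geodesic crossing $\mathring{S}$ reaches both sides. Therefore, under the diffeomorphisms of Lemma~\ref{lemma:hypersurface}, $(\Phi^{+})^{-1}(\Ng_S^{+}\cap\Ng_S^{-})=\ell^{+}_{\mathring{S}}$ and $(\Phi^{-})^{-1}(\Ng_S^{+}\cap\Ng_S^{-})=\ell^{-}_{\mathring{S}}$. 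A geodesic crossing $\mathring{S}$ at $p$ with future lightlike velocity $v_p$ is represented by $\R^{+}\cdot v_p\in\ell^{+}_{\mathring{S}}$ and by the reversed direction $\R^{+}\cdot(-v_p)\in\ell^{-}_{\mathring{S}}$, so the transition $(\Phi^{-})^{-1}\circ\Phi^{+}$ on the overlap is precisely $\R^{+}\cdot v_p\mapsto \R^{+}\cdot(-v_p)$, which is the relation $\sim$ of \eqref{eq:8} and is a diffeomorphism (fiberwise antipodal over the identity of $\mathring{S}$). Consequently $\Phi^{+}$ and $\Phi^{-}$ descend to a well-defined bijection $(\ell^{+}_S\cup\ell^{-}_S)/\!\sim\;\to\Ng$ that is a local diffeomorphism, hence a diffeomorphism; Hausdorffness of the quotient is then automatic, being diffeomorphic to the Hausdorff manifold $\Ng$.

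The conceptual steps are routine once Lemma~\ref{lemma:hypersurface} is available; I expect the point requiring care to be the bookkeeping in the overlap, namely checking that the past/future conventions built into $\ell^{+}_{\mathring{S}}$ and $\ell^{-}_{\mathring{S}}$ make the transition exactly the antipodal identification \eqref{eq:8} and not some other matching, together with the verification that the two a priori independent smooth structures coming from $\ell^{+}_S$ and $\ell^{-}_S$ glue to the \emph{intrinsic} structure of $\Ng$ furnished by Proposition~\ref{prop:manifoldfoliation}, rather than merely to an abstract quotient.
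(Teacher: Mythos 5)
Your proposal is correct and follows essentially the same route as the paper: item \ref{item1thmMiguel} is delegated to Lemma \ref{lemma:hypersurface}, and item \ref{item2thmMiguel} is obtained by covering $\Ng$ with the open sets $\Ng_S^{\pm}$ and checking that the transition on the overlap (the geodesics crossing $\mathring{S}$) is exactly the antipodal identification \eqref{eq:8}. The only (harmless) difference is one of direction: you deduce Hausdorffness and smoothness of the quotient $(\ell^+_S\cup\ell^-_S)/\sim$ from the already-established structure of $\Ng$ via Propositions \ref{prop:manifoldfoliation} and \ref{p_Haus}, whereas the paper additionally verifies directly that the frontier of the glued region creates no non-Hausdorff pairs, because a limit direction at $p\in\partial S$ is admissible for exactly one of $\ell_{\partial S}^{\pm}$ (the phenomenon of Figure \ref{fig:5}) — a point your write-up leaves implicit but does not need.
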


\begin{proof} All the assertions but \ref{item2thmMiguel} come from Theorem \ref{thm:main2:4}, Proposition \ref{p_Haus} and Lemma \ref{lemma:hypersurface}. 

 For \ref{item2thmMiguel}, it is clear that the unique cone geodesics which lie in both  $I_{\mathring{M}}^+(S)$ and $I_{\mathring{M}}^-(S)$
 are those crossing $\mathring{S}$ and, thus,  there is a natural bijection between  $\Ng$ and $(\ell^+_S \cup \ell^-_S)/\sim$.
Using the part \ref{item1thmMiguel}, $\Ng$ is topologically the quotient in the  (disjoint) union of $\ell^+$ and $\ell^-$ obtained by gluing each point and its image by the natural map $\ell_{\mathring{S}}^+ \rightarrow \ell_{\mathring{S}}^-$ consistent with \eqref{eq:8}.

The differentiable structure is inherited by the quotient because any point $\R^+\cdot v_p\in (\ell_{\mathring{S}}^+ \equiv \ell_{\mathring{S}}^-)$ which lie in the boundary of the glued points (i.e. with $p\in \partial S$), can be approximated only by points either in $\ell_{\partial S}^+$ or in 
$ \ell_{\partial S}^- $ (but not from both\footnote{Notice this is the key for Hausdorffness. Otherwise, two  non-Hausdorff related points would appear in the frontier of the  glued regions, as these regions were open (see Figure \ref{fig:5}).}), thus inheriting the differentiable structure of the approximating part. \end{proof}

\begin{figure}
  \centering
  \begin{tikzpicture}
    \path[black, line width=0.03cm, in=160, out=-90] (-3,4) edge (-1,0);
    \path[black, line width=0.03cm, in=20, out=-90] (3,4) edge (1,0);
    \path[black, line width=0.03cm, in=200, out=-20, dashed] (-1,0.25) edge (1,0.25);
    \path[black, line width=0.03cm, in=160, out=20, dashed] (-1,0.25) edge (1,0.25);
    \path[black, line width=0.03cm, in=200, out=-20] (-1,0) edge (1,0);
    \draw[dotted] (0,2) circle (2.7cm and 0.5cm);
\begin{scope}
    \clip (-3,2) rectangle (3,0);
        \draw (0,2) circle (2.7cm and 0.5cm);
\end{scope}
    \path[black, line width=0.03cm, in=200, out=90] (-3,-4) edge (-1.3,0.15);
    \path[black, line width=0.03cm, in=-20, out=90] (3,-4) edge (1.3,0.15);
    \draw[dotted] (0,-2) circle (2.9cm and 0.5cm);
\begin{scope}
    \clip (-3,-2) rectangle (3,-3);
        \draw (0,-2) circle (2.9cm and 0.5cm);
\end{scope}

    \draw [dotted, line width=0.4mm, color=black] (-0.85,0.2) -- (-1.5,-0.3);
        \draw [line width=0.4mm, color=black, -{Latex[length=3mm]}] (-1.08,0.03) -- (-1.7,-0.45);
  \end{tikzpicture}
  \caption{\label{fig:5} The dotted circle represents the boundary of two  identified disks  in the upper and lower surfaces. The depicted vector is tangent to both surfaces, so that it can be approximated by vectors tangent to both   the upper and  the lower surface. Along the circle, 
  the glued surface is either non-Hausdorff (if the gluing  only affects the open disks but excludes the corresponding circles) or not a manifold (if the two circles are also glued).}
\end{figure}
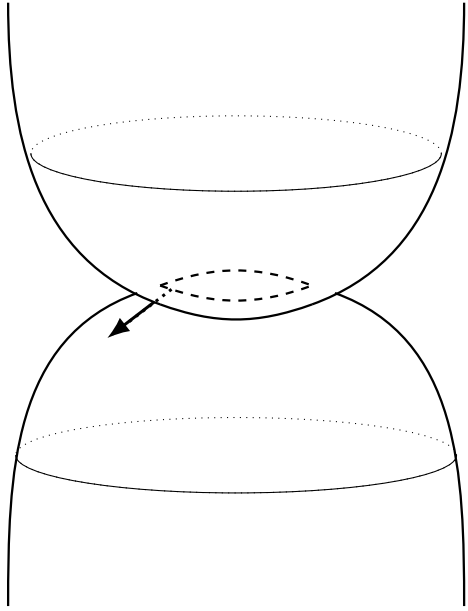

 The previous result allows us to obtain a precise description of the space $\Ng$, based on the fact that every geodesic must either intersect the boundary $\partial M$ or the Cauchy hypersurface $S$. Naturally, the inclusion of the corresponding sets $\ell^{\pm}_{\mathring{S}}$ enables us to capture those geodesics that do not reach the boundary $\partial M$.
 However, if one focuses on those that do intersect the boundary, we can achieve directly the following simpler description that does not require Cauchy hypersurfaces.

\begin{corollary} \label{cor:partialM}
 The open set $U^+$ (resp. $U^-$) $\subset \Ng$ given by all the $\lc{}$-geodesics with a future (resp. past) endpoint at $\partial M$  is diffeomorphic to $\ell^{-}_{\partial M}$ (resp. $\ell^{+}_{\partial M}$).
    
     In particular, if every cone geodesic has a future (resp. past) endpoint at $\partial M$  then $\Ng$ is diffeomorphic to  $\ell^{-}_{\partial M}$ (resp. $\ell^{+}_{\partial M}$), that is, to $\R\times T(\partial S)$. 
\end{corollary}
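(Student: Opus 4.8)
The plan is to realize the stated diffeomorphism by the future-endpoint map and to deduce its smoothness by reducing to the construction already performed in Lemma~\ref{lemma:hypersurface}. I would treat $U^+$ and $\ell^-_{\partial M}$; the assertion for $U^-$ and $\ell^+_{\partial M}$ follows verbatim after reversing the time-orientation.

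First I would set up a bijection $\Phi\colon U^+\to\ell^-_{\partial M}$. Given a cone geodesic $\gamma\in U^+$, let $p_+\in\partial M$ be its future endpoint. By the lightconvexity of $\partial M$ (used exactly as in the proof of Proposition~\ref{p_Haus}), $\gamma$ cannot be tangent to $\partial M$ there, so it meets $\partial M$ transversally and its future-directed velocity $w\in\lc{p_+}$ points outwards. I would set $\Phi(\gamma)$ to be the element of $\ell^-_{\partial M}$ over $p_+$ determined by this outward future null direction $\R^+\cdot w$. Transversality makes $p_+$ and $\R^+\cdot w$ uniquely determined by $\gamma$, so $\Phi$ is well defined and injective. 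For surjectivity one runs the geodesic flow backwards: each $\R^+\cdot v_p\in\ell^-_{\partial M}$ encodes an outward future null direction at some $p\in\partial M$, whose associated lightlike geodesic leaves $\mathring M$ towards the future and, by transversality, enters $\mathring M$ towards the past; the connected component of its intersection with $\mathring M$ having future endpoint $p$ is the unique cone geodesic of $U^+$ sent by $\Phi$ to $\R^+\cdot v_p$. Thus $\Phi$ is a bijection, and $U^+$ is open in $\Ng$ because a transversal future crossing of $\partial M$ is stable under the perturbations allowed by the convergence topology of $\Ng$.

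It remains to promote $\Phi$ to a diffeomorphism, which is the only delicate point. Here $\Ng$ carries the smooth Hausdorff structure of Theorem~\ref{t_Ng_estructuraglobal} and $\ell^-_{\partial M}$ the smooth structure of Proposition~\ref{p_ell}(1). Fixing any spacelike Cauchy slice $S$ in the splitting $M=\R\times S_0$ of Convention~\ref{convention}, the restriction of $\Phi$ to the cone geodesics whose future endpoint lies in $J^-(S)\cap\partial M$ is precisely the diffeomorphism onto $\ell^-_{\partial S}$ furnished by Lemma~\ref{lemma:hypersurface} (the $\ell^-_{\partial S}$ piece, obtained there by flowing boundary data along the geodesic foliation). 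Letting the slice vary over the splitting exhausts all of $\partial M$, so these local diffeomorphisms cover $U^+$ and $\ell^-_{\partial M}$; since the smooth structure on $\Ng$ is intrinsic, they agree on overlaps and patch to a global diffeomorphism $\Phi$. The main obstacle is thus verifying this compatibility, i.e. that the corner reparametrization introduced in Lemma~\ref{lemma:hypersurface} is exactly the transition induced on the overlaps and that it is insensitive to the choice of slice; both hold because the differentiable structure of $\Ng$ does not depend on $S$.

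Finally, the ``in particular'' is immediate. If every cone geodesic has a future endpoint at $\partial M$ then $U^+=\Ng$, so $\Phi$ yields $\Ng\cong\ell^-_{\partial M}$, and Proposition~\ref{p_ell}(1) identifies the right-hand side with $\R\times T(\partial S)$; the past version is dual.
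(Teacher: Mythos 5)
Your proposal is correct and takes essentially the same route as the paper, whose own proof merely states that the first assertion is straightforward and invokes Proposition~\ref{p_ell} for the last one; your future-endpoint map $\Phi$, with transversality at $\partial M$ coming from lightconvexity and smoothness obtained by restricting the identifications $\Ng^-_S\cong\ell^-_S$ of Lemma~\ref{lemma:hypersurface} to the $\ell^-_{\partial S}$ pieces as the Cauchy slice varies, is exactly the intended filling-in of that ``straightforward'' step.
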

\begin{proof}
    The first assertion is straightforward and use Proposition   \ref{p_ell} for the last one.
\end{proof}

The simplified structure in the  corollary also appears in the following examples.

\begin{exe}\label{ejemplo3apart} {\em 
(1) Consider a product $\R\times S$ as in Convention \ref{convention} with  a  compact Cauchy $S$. Using a cone triple, the  cone structure can be written as $\lc{}=L^{-1}(0)$ with  
\begin{equation}\label{eq_example}
L=dt^2-F_t^2 .
\end{equation}
where $F_t$ is a $t$-depending Finsler metric ($F$ can be regarded as a Finsler metric on  $\mathrm{Ker}\,dt$, as in  the proof of Theorem \ref{t_4.4}).  Assume that infinitesimal lightconvexity holds and, additionally, the existence of a metric $F_\infty$ bounding all $F_t$ to the future,  that is, for some $t_0>0$,  $F_t\leq F_\infty$ for all  $t>t_0$. Then, the cones are  wider than those for $dt^2-F_\infty^2$ for $t>t_0$ and, necessarily, all the lightlike geodesics in $\R\times \mathring{S}$ will intersect $\partial M$ towards the future. Thus, Corollary \ref{cor:partialM} applies and $\Ng$ is identifiable to $\ell^-_{\partial M}$, thus, it is diffeomorphic to $\R\times T(\partial S)$. 

(2) Consider  metrics as in \eqref{eq_example} on $ I \times\withboundaryS$  with $\withboundaryS$ compact and  $I\subset \R$ is an interval (see 
Figure~\ref{fig:4}   for $n=1$ and in Figure~\ref{fig:6} for $n=2$),   
which are necessarilly globally hyperbolic with smooth timelike boundary. In the case $F_t$ independent of $t$, infinitesimal lightconvexity is equivalent to the usual convexity of the boundary. 
The simplest case when all the $F_t$ are just the usual Euclidean metric on a closed ball of radius $R>0$ in $\R^n$ may be illustrative,  as the representation of the space of lightlike geodesics may vary depending on $R$ and the length of $I$. 
}
\begin{figure}
  \centering
    \begin{tikzpicture}[scale=0.8]
      \begin{scope}[shift = {(-11,0)}]
      \draw [line width = 0.4mm] (-4,-5) -- node[below right] {} (-4,5);
      \draw [line width = 0.4mm] (4,-5) -- (4,5) node[above left] {\Large$[-1,1]\times I$};
      \draw [line width = 0.4mm] (-4,0) -- node[below right] {} (4,0);
      \node at (5.3,2) {$J^+(\withboundaryS)\cap \partial M$};
      \node at (-5.3,2) {$J^+(\withboundaryS)\cap \partial M$};
      \node at (5.3,-2) {$J^-(\withboundaryS)\cap \partial M$};
      \node at (-5.3,-2) {$J^-(\withboundaryS)\cap \partial M$};
      \node at (-4.3,0) {\Large$\withboundaryS$};
      \filldraw[fill=black] (-4,3) circle (0.65mm)node[below right] {$q$};
      \draw [line width = 0.2mm, dashed, color=gray] (-4,3) -- (-3.5,4);
      \filldraw[fill=white, draw=blue] (-3.5,4) circle (0.65mm) node[right]{$[q]^+$};
      \path [lightgray, out=65, in=-140,-{Latex[length=3mm]}] (-4,3) edge node[black, above right, pos=0.97] {{$\gamma_q$}} (-2.7,5);
      \filldraw[fill=black] (4,-3) circle (0.65mm)node[below right] {$r$};
      \draw [line width = 0.2mm, dashed, color=gray] (3.5,-4) -- (4,-3);
      \filldraw[fill=white, draw=red] (3.5,-4) circle (0.65mm) node[left]{$[r]^-$};
      \path [lightgray, out=55, in=-115,-{Latex[length=3mm]}] (2.7,-5) edge node[black, left, pos=0.1] {{$\gamma_r$}} (4,-3);
        \filldraw[fill=black] (-1,0) circle (0.65mm)node[below left] {$p$};    
        \draw [line width = 0.2mm, dashed, color=gray] (-1.5,-1) -- (-0.5,1);
        \draw [line width = 0.2mm, dashed, color=gray] (-1.5,1) -- (-0.5,-1);
        \filldraw[fill=white, draw=blue] (-0.5,1) circle (0.65mm) node[right]{$[p]_1^+$};
        \filldraw[fill=white, draw=red] (-1.5,-1) circle (0.65mm) node[left]{$[p]_1^-$};
        \filldraw[fill=white, draw=red] (-0.5,-1) circle (0.65mm) node[right]{$[p]_2^-$};
        \filldraw[fill=white,draw=blue] (-1.5,1) circle (0.65mm) node[left]{$[p]_2^+$};    
        \draw [lightgray,-{Latex[length=3mm]}] plot [smooth] coordinates{(-1.5,-2) (-1.25,-0.6) (-1,0) (-0.75,0.6) (-0.5,2)};
        \draw [lightgray,{Latex[length=3mm]}-] plot [smooth] coordinates{(-1.5,2) (-1.25,0.6) (-1,0) (-0.75,-0.6) (-0.5,-2)};
        \node at (-0.5,2.2) {$\gamma_{p,1}$};
        \node at (-1.5,2.2) {$\gamma_{p,2}$};
      \filldraw[fill=black] (4,0) circle (0.65mm)node[below right] {$s$};
      \draw [line width = 0.2mm, dashed, color=gray] (3.5,-1) -- (4,0);
      \filldraw[fill=white, draw=red] (3.5,-1) circle (0.65mm) node[left]{$[s]^-$};
      \path [lightgray, out=75, in=-115,-{Latex[length=3mm]}] (3.1,-2.2) edge node[black, left, pos=0.1] {{$\gamma^-_s$}} (4,0);
      \draw [line width = 0.2mm, dashed, color=gray] (3.5,1) -- (4,0);
      \filldraw[fill=white, draw=blue] (3.5,1) circle (0.65mm) node[left]{$[s]^+$};
      \path [lightgray, out=-75, in=115,{Latex[length=3mm]}-] (3.1,2.2) edge node[black, left, pos=0.1] {{$\gamma^+_s$}} (4,0);
    \end{scope}
    \draw [black] plot [smooth] coordinates{(3,5) (3,0.6) (2.3,0.2) (0.4,0.2) (-0.4,-0.2) (-2.3,-0.2) (-3,-0.6) (-3,-5)};
    \draw [black] plot [smooth] coordinates{(-3,5) (-3,0.6) (-2.3,0.2) (-0.4,0.2) (-0.2,0.1)};
    \draw [black] plot [smooth] coordinates{(0.2,-0.2) (0.4,-0.2) (2.3,-0.2) (3,-0.6) (3,-5)};
    \node at (2.5,5.4) {\Large $\Ng$};
    \filldraw[fill=white] (-3.05,3) circle (0.65mm)node[below right] {$[q]^{+}$};
    \filldraw[fill=white] (-1,0.2) circle (0.65mm)node[above right] {$[p]_{1}^{+}$};
    \filldraw[fill=white] (-1,-0.2) circle (0.65mm)node[below right] {$[p]_{2}^{+}$};
    \filldraw[fill=white] (2.9,-0.2) circle (0.65mm)node[below right] {$[s]^{-}$};
    \filldraw[fill=white] (2.9,0.2) circle (0.65mm)node[above right] {$[s]^{+}$};
    \filldraw[fill=white] (3.05,-3) circle (0.65mm)node[above left] {$[r]^{-}$};
    \end{tikzpicture}
         \caption{\label{fig:4} 
            The case of  $\withboundaryM= I\times [-1,1]$,  $I\subset \mathbb{R}$ an interval, $0\in I$   (a subset of $\Lo^2$). \\
           On the  left, $M$ and some points of $\Ng$ (depicted as unfilled points). Here all the depicted curves are lightlike geodesics intersecting either $\withboundaryS=  \{0\} \times   [-1,1]$ or $\partial M= I \times \left\{\pm 1 \right\}$. The dotted lines  represent $\pm \lc{}$   at each point.
    Observe that $[q]^+\in \ell^+_{\partial S }$ and $[r]^-\in \ell^-_{\partial  S  }$ represent  a  class of (unparametrized) lightlike geodesics starting from $q$ and ending at $r$ respectively. In this case, one  has only one class for each point in $\partial M\setminus \withboundaryS$.  Each $p\in S$ is crossed by two lightlike geodesics $\gamma_{p,i}$,  $i=1,2$. Then, $[p]^{+}_i\in \ell^+_{S}$ and $[p]^-_i\in \ell^-_S$  are $\sim$ related, according to \eqref{eq:8}. Hence, 
    we have two points in $\Ng$  (represented at the right figure by $[p]^{+}_{1}$ and $[p]^{+}_{ 2  }$).   Finally, for a point $s\in \partial S$ we also have two points $[s]^-$ and $[s]^+$ in $\Ng$, each one belonging to $\ell^+_{\partial M}\cap \ell^+_S$ and $\ell_{\partial M}^- \cap \ell^-_{S}$ respectively.\newline \indent On the  right, a  representation of the space $\Ng$ (which has two connected components), including the points at the left figure. }
\end{figure}

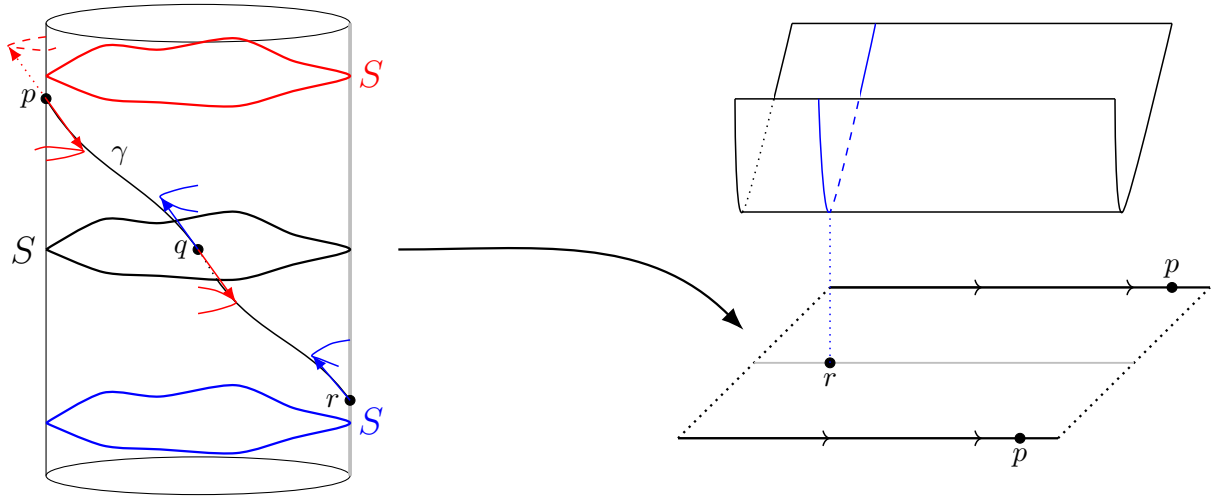
\begin{figure}
  \centering
\begin{minipage}{0.4\paperwidth}
  \begin{tikzpicture}[remember picture]
    \draw (0,3) circle (2cm and 0.25cm);
    \draw (0,-3) circle (2cm and 0.25cm);
    \draw (-2,-3) -- (-2,3);
    \draw[color = lightgray, line width= 0.5mm] (2,-3) -- (2,3);
    \draw [line width = 0.3mm] plot [smooth] coordinates{(-2,0) (-1.25,-0.3) (-0.5,-0.35) (0.5,-0.4) (1.25,-0.2) (2,0) (1.25, 0.2) (0.5, 0.5) (-0.5,0.35) (-1.25, 0.4) (-2,0)};
    \node[left] at (-2,0) {\Large $S$};
    \begin{scope}[shift={(0,-2.3)}]
            \draw [line width = 0.3mm, blue, mylabel= at 0.5 right with {\Large $S$}] plot [smooth] coordinates{(-2,0) (-1.25,-0.3) (-0.5,-0.35) (0.5,-0.4) (1.25,-0.2) (2,0) (1.25, 0.2) (0.5, 0.5) (-0.5,0.35) (-1.25, 0.4) (-2,0)};
    \end{scope}
        \begin{scope}[shift={(0,2.3)}]
            \draw [line width = 0.3mm, red, mylabel= at 0.5 right with {\Large $S$}] plot [smooth] coordinates{(-2,0) (-1.25,-0.3) (-0.5,-0.35) (0.5,-0.4) (1.25,-0.2) (2,0) (1.25, 0.2) (0.5, 0.5) (-0.5,0.35) (-1.25, 0.4) (-2,0)};
    \end{scope}
 \path [black, line width = 0.2mm, in = 125, out=-60] (-2,2) edge node[above] {$\gamma$} (0,0); 
 \path [dotted, black, line width = 0.2mm, in = 125, out=-55] (0,0) edge (0.25,-0.4);     
 \path [black, line width = 0.2mm, in = 125, out=-55] (0.25,-0.4) edge (2,-2);

\filldraw[fill=black] (-2,2) circle (0.65mm)node[left] {$p$};
\draw [dotted,line width = 0.2mm, color=red, -{Latex[length=2mm]}] (-2,2) -- (-2.5,2.7);
\draw [line width = 0.2mm, color=red, -{Latex[length=2mm]}] (-2,2) -- (-1.5,1.3);

\draw [dashed, color=red, line width = 0.2mm] plot [smooth] coordinates{(-2,2.82) (-2.25, 2.78) (-2.5,2.7) (-2.25, 2.66) (-2,2.64) (-1.85, 2.68)};

\begin{scope}[rotate around={180:(-2,2)}]
\draw [color=red, line width = 0.2mm] plot [smooth] coordinates{(-2,2.82) (-2.25, 2.78) (-2.5,2.7) (-2.25, 2.66) (-2,2.64) (-1.85, 2.68)};    
\end{scope}

\filldraw[fill=black] (0,0) circle (0.65mm)node[left] {$q$};
\draw [line width = 0.2mm, color=blue, -{Latex[length=2mm]}] (0,0) -- (-0.5,0.7);
\draw [line width = 0.2mm, color=red, -{Latex[length=2mm]}] (0,0) -- (0.5,-0.7);

\draw [color=blue, line width = 0.2mm] plot [smooth] coordinates{(0,0.5) (-0.25, 0.55)(-0.5,0.7) (-0.25, 0.8) (0,0.85)};

\begin{scope}[rotate around={180:(0,0)}]
    \draw [color=red, line width = 0.2mm] plot [smooth] coordinates{(0,0.5) (-0.25, 0.55)(-0.5,0.7) (-0.25, 0.8) (0,0.85) };
\end{scope}


\filldraw[fill=black] (2,-2) circle (0.65mm)node[left] {$r$};
\draw [line width = 0.2mm, color=blue, -{Latex[length=2mm]}] (2,-2) -- (1.5,-1.4);

\draw [color=blue, line width = 0.2mm] plot [smooth] coordinates{(2,-1.2) (1.75, -1.25) (1.5,-1.4) (1.6, -1.45) (1.85,-1.55)};
\node[name=salida] at (2.5,0) {};
\end{tikzpicture}
\end{minipage}
\begin{minipage}{0.35\paperwidth}
  \begin{tikzpicture}[remember picture]
    \draw[line width=0.3mm] (-2,-2) -- (3,-2) ;
    \draw[line width=0.2mm,->] (-2,-2) -- (0,-2);
        \draw[line width=0.2mm,->] (-2,-2) -- (2,-2) ;
        \draw[line width=0.3mm] (0,0) -- (5,0) ;
           \draw[line width=0.2mm, ->] (0,0) -- (2,0) ;
           \draw[line width=0.2mm, ->] (0,0) -- (4,0) ;
           \draw[dotted, line width = 0.3mm] (-2,-2) -- (0,0);
    \draw[dotted, line width = 0.3mm] (3,-2) -- (5,0);
    \draw[line width=0.25mm, color=lightgray] (-1,-1) -- (4,-1);
    \filldraw[fill=black] (0.,-1) circle (0.65mm)node[below] {$r$};
    \path[dotted, line width = 0.2mm, blue] (0,-1) edge (0,1);
      \filldraw[fill=black] (2.5,-2) circle (0.65mm)node[below] {$p$};
            \filldraw[fill=black] (4.5,0) circle (0.65mm)node[above] {$p$};
    \draw[line width = 0.2mm] (-0.5,3.5) -- (4.5,3.5);
    \draw[line width = 0.2mm] (-1.25,2.5) -- (3.75,2.5);
    \draw[line width = 0.2mm] (-1.15,1.) -- (3.85,1);
    \draw [line width = 0.2mm] plot [smooth] coordinates{(3.75,2.5) (3.85,1) (4.5,3.5)};

\begin{scope}[xshift=-0.15cm]
\clip (0,2.5) rectangle (0.15,1);
    \draw [line width = 0.2mm, color = blue] plot [smooth] coordinates{(0,2.5) (0.15,1) (0.75,3.5)};
\end{scope}    
\begin{scope}[xshift=-0.15cm]
\clip (0.15,1) rectangle (0.55,2.5);
    \draw [dashed,line width = 0.2mm, color = blue] plot [smooth] coordinates{(0,2.5) (0.15,1) (0.75,3.5)};
\end{scope}    
\begin{scope}[xshift=-0.15cm]
\clip (0.55,2.5) rectangle (0.75,3.5);
    \draw [line width = 0.2mm, color = blue] plot [smooth] coordinates{(0,2.5) (0.15,1) (0.75,3.5)};
\end{scope}    

    \begin{scope}
      \clip (-1.15,1) rectangle (-0.1,2.5);
    \draw [line width = 0.2mm, dotted] plot [smooth] coordinates{(-1.25,2.5) (-1.15,1) (-0.5,3.5)};      
    \end{scope}

    \begin{scope}
\clip (-2,-1.15,1) rectangle (-1.15,2.5);
      \draw [line width = 0.2mm] plot [smooth] coordinates{(-1.25,2.5) (-1.15,1) (-0.5,3.5)};
    \end{scope}

    \begin{scope}
\clip (-0.73,1) rectangle (-0.5,3.5);
      \draw [line width = 0.2mm] plot [smooth] coordinates{(-1.25,2.5) (-1.15,1) (-0.5,3.5)};
    \end{scope}    
    \node[name=llegada] at (-1,-0.7) {};
  \end{tikzpicture}
\end{minipage}
\begin{tikzpicture}[remember picture, overlay]
\path[line width = 0.3mm, out=0, in=135, -{Latex[length=3mm]}](salida) edge (llegada);    
\end{tikzpicture}
\caption{\label{fig:6}  In the left-hand figure,  a single geodesic $\gamma$ determines a point in $\ell^+_{\partial M}$  (which can be also seen as a point in $\ell^-_{\partial S}$ for red Cauchy $S$), one in $\ell^{-}_{\partial M}$ (which can be also seen as a point in $\ell^+_{\partial S}$ for blue Cauchy $S$)  
and one in $\ell^{\pm}_S/\sim$ for black Cauchy $S$. Accordingly,   these points are represented by red and blue vectors (oriented directions), respectively. Also depicted are the subsets of $\Ng$ corresponding to the pre-geodesics that end at $p$ (shown as a red arc), start at  $r$ (a blue arc), and pass through $q$ close to $\gamma$ (represented by two arcs blue and red, whose points in opposite directions with respect $q$ should be identified).\\
On the right-hand side, the horizontal rectangle  represents the cylindrical boundary of the figure on the left cutting it by de vertical line at $p$. Above  it, a pictorial representation of the subset of $\Ng$ projecting onto the vertical gray line through $r$. The blue arc from the left figure is also included  
in this representation. This arc has been rotated, allowing it to be visualized as a fibered space over the gray line (but all the blue arc projects on $r$).}
\end{figure}
\end{exe}

\begin{remark}[Cone geodesics at totally lightgeodesic $\partial M$] \label{r_totallylightconvex}
    {\em   In our analysis, lightlike geodesics $\gamma$ or directions tangent to  $\partial M$  are always excluded, as they cannot enter in $\mathring{M}$ and, thus, depending on the sign of $II(\gamma',\gamma')$ either remain at $\partial M$ or would be pushed out $M$. Thus, it is not natural to consider the space of cone geodesics of the whole $M$ except when $\partial M$ {\em is totally lightgeodesic}, i.e., $II(w,w)=0$ for all lightlike $w$ tangent to $\partial M$. 
    
     Recall that (each connected component of) $\partial M$ is intrinsically globally hyperbolic \cite[Theorem 3.8]{RMI_AFS} and, thus, its lightspace is a smooth manifold $\Ng_{\partial M}$.  In the case that $\partial M$ is totally lightgeodesic, $\Ng_{\partial M}$  matches  $\Ng$  becoming a manifold with boundary.
    } 
\end{remark}

\subsection{Applications to asymptotically anti-de Sitter spacetimes}\label{s_4.3}

Next, our previous results will be applied to a significant class  of spacetimes, the \emph{asymptotically Anti-de Sitter}   (asymptotically AdS for short) ones. Up to a conformal factor 
(irrelevant for our aims in causality), these spacetimes become globally hyperbolic with a timelike boundary, where the latter is  infinitesimally null convex. Moreover, $\ell_{\partial M}^{\pm}$ will be computed and interpreted.

\subsubsection{The case of AdS}

AdS spacetime is a complete simply connected Lorentzian manifold $(M^{n+1},g_{AdS})$ whose metric  has constant negative sectional curvature, which will be chosen equal to -1 without loss of generality (in particular, our results are preserved by homotheties). In natural coordinates up to $\{0\}$, 
\begin{equation}
\label{pend:eq:90}
g_{AdS}=- \left( 1+r^2 \right) dt^2 + \left( \frac{1}{1+r^2} \right)dr^2 + r^2 g_{\mathbb{S}^{n-1}},\;\;(t,r,\theta)\in M:= \mathbb{R}\times \mathbb{R}_+\times \mathbb{S}^{n-1},
\end{equation}
where $(\mathbb{S}^{n-1},g_{\mathbb{S}^{n-1}})$ denotes the $(n-1)$-dimensional sphere with its usual metric. 

\begin{lemma}\label{l_AdS}
      Each hypersurface $H_r:=\left\{ \mathbb{R} \right\} \times \left\{ r \right\}\times \mathbb{S}^{n-1}$ is   infinitesimally (strongly) lightconvex. Moreover, $H_r$ is the boundary of a globally hyperbolic spacetime-with-timelike-boundary.
\end{lemma}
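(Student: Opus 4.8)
The plan is to exploit that both assertions depend only on the conformal class of $g_{AdS}$ (equivalently, only on the cone structure $\lc{}$): infinitesimal lightconvexity is conformally invariant by Proposition~\ref{p_invar_conf_lightconvex}, and global hyperbolicity with timelike boundary is a purely causal notion. Hence I would first pass to the static conformal representative of AdS. Writing $r=\tan\psi$ with $\psi\in[0,\pi/2)$, one gets $g_{AdS}=\sec^2\psi\,\bar g$ with $\bar g=-dt^2+d\psi^2+\sin^2\psi\,g_{\mathbb{S}^{n-1}}$, so that $(M,\lc{})$ is conformal to the region $\{\psi<\pi/2\}$ of the Einstein static universe $\R\times\mathbb{S}^n$, and $H_r$ becomes the geodesic sphere $\{\psi=\psi_0\}$, $\psi_0:=\arctan r\in(0,\pi/2)$. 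Throughout, the interior is $\{\psi<\psi_0\}$ and the center $\rho=0$ is a regular interior point (the origin of a ball), not part of the boundary.

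For the lightconvexity I would use the characterization of infinitesimal convexity via the Hessian of the boundary coordinate (Remark~\ref{r_nulconvex_depend_geodeic_vf} and \eqref{eq:71bis2}): writing the boundary as $\{x^{n+1}=0\}$ with $x^{n+1}=\psi_0-\psi\geq0$ on $M$, infinitesimal \emph{strong} lightconvexity amounts to $\mathrm{Hess}^w_{x^{n+1}}(w,w)<0$, i.e.\ $\ddot\psi(0)>0$ along the $\bar g$-lightlike geodesic $\gamma$ with $\dot\gamma(0)=w$ tangent to $H_r$. In the static metric the only relevant Christoffel symbol is $\Gamma^\psi_{ab}=-\sin\psi\cos\psi\,(g_{\mathbb{S}^{n-1}})_{ab}$, so the geodesic equation gives $\ddot\psi=\sin\psi\cos\psi\,|\dot\theta|^2_{\mathbb{S}^{n-1}}$. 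Imposing $\dot\psi=0$ and lightlikeness $\dot t^2=\sin^2\psi_0\,|\dot\theta|^2_{\mathbb{S}^{n-1}}$ (with $\dot t\neq0$ for a nonzero null vector) yields $\ddot\psi(0)=\dot t^2\cot\psi_0>0$, since $\psi_0\in(0,\pi/2)$. This is the strict positivity required for strong infinitesimal lightconvexity, and, being a statement about the cone structure, it is independent of the conformal factor $\sec^2\psi$.

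For the second assertion, in the same representative the truncated spacetime is $M_r=\R\times\bar B(\psi_0)$ with $\bar g=-dt^2+h$, where $h$ is the round metric restricted to the closed geodesic cap $\bar B(\psi_0)\subset\mathbb{S}^n$, a compact Riemannian manifold-with-boundary. This is a standard static product: $t$ is a temporal function with timelike gradient and compact spacelike level sets $\{t\}\times\bar B(\psi_0)$, which are Cauchy, so $M_r$ is globally hyperbolic with timelike boundary $\R\times\partial B(\psi_0)$ (the latter timelike because $\partial_t$ is timelike and tangent to it); this is the class of products covered by \cite{RMI_AFS}. Since the causal structure is conformally invariant, the same conclusion holds for $g_{AdS}$ on $M_r$, whose boundary is exactly $H_r$.

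The routine parts (the $\sec^2\psi$ conformal change and the single Christoffel symbol) are immediate. The point demanding care is the orientation and sign bookkeeping --- that the interior is $\{\psi<\psi_0\}$ and that $\ddot\psi>0$ is the correct sign for strong convexity, so the tangent null geodesic leaves $M_r$ and meets $H_r$ only at the initial point, in agreement with Proposition~\ref{prop_strictconv} --- together with invoking the \emph{with-boundary} global hyperbolicity of the static product in the precise sense of \cite{RMI_AFS} rather than the classical boundaryless statement.
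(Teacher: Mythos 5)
Your proof is correct, but it takes a genuinely different route from the paper's. The paper works directly with $g_{AdS}$ in the coordinates of \eqref{pend:eq:90}: it picks the explicit unit normal rigging $\eta=-\sqrt{1+r^2}\,\partial_r$ and the tangent lightlike vectors $X^{\pm}_v=\partial_t\pm\frac{\sqrt{1+r^2}}{r}v$, and evaluates $\second{\eta}(X^{\pm}_v,X^{\pm}_v)$ via Koszul's formula, obtaining the explicit value $\sqrt{1+r^2}/r>0$; global hyperbolicity is then checked by noting that causal diamonds in $\{r\le r_0\}$ sit inside the compact slab $t(p)\le t\le t(q)$. You instead pass to the Einstein-static conformal representative $r=\tan\psi$, invoke Proposition~\ref{p_invar_conf_lightconvex} (legitimately, since the factor $\sec^2\psi$ is smooth and positive at $\psi_0=\arctan r<\pi/2$), and read off strong lightconvexity from the single Christoffel symbol $\Gamma^{\psi}_{ab}=-\sin\psi\cos\psi\,(g_{\mathbb{S}^{n-1}})_{ab}$ via the Hessian characterization of \eqref{eq:71bis2}; your sign bookkeeping ($x^{n+1}=\psi_0-\psi$, so $\mathrm{Hess}^w_{x^{n+1}}(w,w)=-\ddot\psi(0)=-\dot t^{\,2}\cot\psi_0<0$) is consistent with \eqref{eq:4}, and $\dot t\ne 0$ for any nonzero null tangent vector gives the strict inequality. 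Your route is arguably more conceptual — it reduces the claim to convexity of geodesic spheres of radius $<\pi/2$ in $\mathbb{S}^n$ and anticipates the conformal compactification the paper performs anyway around \eqref{pend:eq:9} — and your global-hyperbolicity argument via compact Cauchy slices of the static product is an equally valid variant of the paper's diamond-compactness check. What the paper's direct computation buys, and yours does not, is the quantitative value $\sqrt{1+r^2}/r$ of the second fundamental form, whose limit $1$ as $r\to\infty$ is exploited in Remarks~\ref{r_AdSlighttotallygeodesic0} and~\ref{r_AdSlighttotallygeodesic} to contrast strong lightconvexity of the $H_r$ with the merely totally lightgeodesic character of $\partial$AdS. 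One cosmetic point: your appeal to Proposition~\ref{prop_strictconv} for the statement that the tangent null geodesic \emph{leaves} $M_r$ is really the dual of that proposition (which treats $II^w(w,w)<0$); the correct reference is the remark immediately after it about the converse under strong convexity.
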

\begin{proof}
    Any lightlike vector tangent to $H_{r}$ is proportional to $X^{\pm}_v=\frac{\partial}{\partial t}\pm \frac{\sqrt{1+r^2}}{r}v$, where $v$ is a (locally extended) unit vector  on $(\mathbb{S}^{n-1},g_{\mathbb{S}^{n-1}})$. Recalling \eqref{eq:1} and Definition \ref{def:main2:12},  let us  compute $\second{\eta}$ on $X^{\pm}_v$ choosing the rigging vector field $\eta=-\left(\sqrt{1+r^2} \right)\frac{\partial}{\partial r}$. As $\eta$ is  spacelike, unit and  normal to $H_r$ we have (using Koszul's formula for the second line): 
\begin{equation}
\label{pend:eq:200}
\begin{split}
\second{\eta}(X^{\pm}_v,X^{\pm}_v)=&{g}_{AdS}(\nabla_{X^{\pm}_v}X^{\pm}_v,\eta)  \\ = & - \frac{\sqrt{1+r^2} }{2} \left(  2 X^{\pm}_v {g}_{AdS}(\frac{\partial}{\partial r}, X^{\pm}_v)- \frac{\partial}{\partial r} {g}_{AdS}(X^{\pm}_{v}, X^{\pm}_{v}) + 2{g}_{AdS}(\mathcal{L}_{\frac{\partial}{\partial r}}X^{\pm}_v,X^{\pm}_v)   \right)  \\ = & -\left(\sqrt{1+r^2} \right){g}(\mathcal{L}_{\frac{\partial}{\partial r}}X^{\pm}_v,X^{\pm}_v) = -\left(\sqrt{1+r^2} \right){g}(\pm \partial_r \left(\frac{\sqrt{1+r^2}}{r}  \right)v,X^{\pm}_v)   \\
  =& -\left(\sqrt{1+r^2} \right)r^2 \partial_r \left(\frac{\sqrt{1+r^2}}{r}  \right) \frac{\sqrt{1+r^2}}{r}  = -\frac{\sqrt{1+r^2}}{2}r^2 \partial_r \left(\frac{1+r^2}{r^2}  \right) = \frac{\sqrt{1+r^2}}{r}
\end{split}
\end{equation}
which is positive,  as required. The last assertion is straightforward because, for any $p,q$ in the region $r\leq r_0$, the causal diamond $J^+(p)\cap J^-(q)$ lies in the compact region $t(p)\leq t \leq t(q)$.
\end{proof}
\begin{remark}\label{r_AdSlighttotallygeodesic0} 
{\em The positiveness of $^\eta II$ suggests infinitesimal lightconvexity of AdS spacetime at its asymptotic boundary $\partial$AdS and, moreover,  the  limit $r\rightarrow\infty$ in \eqref{pend:eq:200} is $1$. From Proposition~\ref{p_invar_conf_lightconvex}, both lightconvexity and strong lightconvexity are conformally 
invariant. As will be clear next,  the latter does not apply directly to $\partial$AdS, because the metric $g_{AdS}$ is not defined therein and the conformal factor to be chosen for the extension of the metric will go to 0 on $\partial$AdS. 
} \end{remark}
Next, a conformal representative of the class extensible along $\partial$AdS  will be  used. The conformal factor is $1/r^2$ for the region $r \geq 2$ and it is 1 when $r\leq 1$), namely, 
\begin{equation}
  \label{pend:eq:100}
  \overline{g}_{AdS}=\frac{1}{r^{2}}\,g_{AdS}= - \left( \frac{1+r^2}{r^2} \right) dt^2 + \left( \frac{1}{r^2(1+r^2)} \right)dr^2 + g_{\mathbb{S}^{n-1}}, \qquad r \geq 2,
\end{equation}
and change $r$ by the variable $dz=\frac{1}{r \sqrt{1+r^2}}dr$ with $z(2)=0$. Putting $z_*=\int_2^{\infty}\frac{1}{r \sqrt{1+r^2}}dr\;(<\infty)$, the expression \eqref{pend:eq:100} becomes 
\begin{equation}
\label{pend:eq:9}
\overline{g}_{AdS}=-f(z)dt^2 + dz^2 + g_{\mathbb{S}^{n-1}},\;\; (t,z,\theta)\in \mathbb{R}\times ( -\infty,  z_*)\times \mathbb{S}^{n-1},
\end{equation}
where $f(z(r))= \frac{1+r^2}{r^2}$. Notice that the origin $\{0\}$ in \eqref{pend:eq:90} is mapped into $z=-\infty$, but this will be irrelevant next and one can focus on $z>0$ $(r>2)$.  In particular, 
$$\lim_{z\rightarrow z_*}f(z)=\lim_{r\rightarrow \infty}\frac{1+r^2}{r^2}=1, \quad  \lim_{z\rightarrow z_*}\frac{df}{dz} \left(= \lim_{z\rightarrow z_*}\frac{dr}{dz} \; \cdot \;  \lim_{r\rightarrow \infty}\frac{d(f\circ z)}{dr}\right) =0, \quad \lim_{z\rightarrow z_*}\frac{d^2f}{dz^2}=2.$$
So,  
 this conformal AdS representative  can be  $C^2$-extended\footnote{Indeed, we only need a $C^{1,1}$ extension for our needs on convexity, even if smoother extensions are possible.}  to $z=z_*$. 
 \begin{lemma}\label{l_AdS2}
      $\withboundaryM=\mathbb{R}\times ( -\infty,z_*]\times \mathbb{S}^{n-1}$ (up to the origin) endowed with the metric \eqref{pend:eq:9} is a conformal extension of AdS as a spacetime-with-timelike-boundary  $\partial M=\{z=z_*\}$. Moreover, $\partial M$ is totally geodesic. 
 \end{lemma}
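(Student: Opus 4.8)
The plan is to treat the two assertions separately: first that $(\withboundaryM,\overline{g}_{AdS})$ is genuinely a conformal extension of AdS carrying a timelike boundary at $\{z=z_*\}$, and then that this boundary is totally geodesic. For the conformal-extension part I would simply unwind the construction: on the region $r>2$ one has $\overline{g}_{AdS}=\tfrac{1}{r^2}g_{AdS}$ with the smooth positive factor $1/r^2$, so the two metrics are pointwise conformal and share the same cone structure, while the interpolation to the identity factor on $r\leq 1$ keeps this a smooth positive conformal change on the whole interior (the origin $r=0$ being sent to $z=-\infty$, hence harmlessly excluded). The substitution $dz=\tfrac{dr}{r\sqrt{1+r^2}}$ is a diffeomorphism of $(2,\infty)$ onto $(0,z_*)$, and because the limits displayed just before the lemma give that $f$ extends $C^2$ across $z=z_*$ with $f(z_*)=1$, the expression \eqref{pend:eq:9} extends to the closed slab $z\leq z_*$ as an honest Lorentzian metric.

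Next I would verify that $\partial M=\{z=z_*\}$ is a timelike boundary in the sense of the paper. In the relativistic signature $(-,+,\dots,+)$ used throughout this section, the coordinate field $\partial_z$ satisfies $\overline{g}_{AdS}(\partial_z,\partial_z)=1>0$, so the normal to the slices is spacelike; equivalently, the induced metric on $\{z=z_*\}$ is $-f(z_*)\,dt^2+g_{\mathbb{S}^{n-1}}=-dt^2+g_{\mathbb{S}^{n-1}}$, which is Lorentzian. Hence the cone meets $T_p\partial M$ transversally at every boundary point, which is exactly the timelike-boundary condition.

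For the totally geodesic claim the key observation is that \eqref{pend:eq:9} is already in the Gaussian normal form $\overline{g}_{AdS}=dz^2+h_z$, where $h_z:=-f(z)\,dt^2+g_{\mathbb{S}^{n-1}}$ is the family of metrics induced on the slices $\{z=\mathrm{const}\}$. For such a splitting the second fundamental form of a slice with respect to the unit normal $N=\partial_z$ is, up to the usual sign convention, $\second{N}=-\tfrac{1}{2}\,\partial_z h_z=\tfrac{1}{2}f'(z)\,dt^2$ (the only slice-metric coefficient depending on $z$ is the $dt^2$ one). Evaluating at the boundary and using the already recorded limit $\lim_{z\to z_*}f'(z)=0$, I obtain $\second{N}\equiv 0$ on $\partial M$, so every tangent direction is infinitesimally geodesic and the boundary is totally geodesic. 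Since the vanishing of $\second{N}$ holds for all tangent vectors, it is independent of the rigging choice by Proposition \ref{prop:main2:5}.

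The computation is short and I do not expect a genuine obstacle; the only points requiring care are bookkeeping. First, the totally geodesic property rests entirely on $f'(z_*)=0$, which reflects the specific choice of conformal factor $1/r^2$ making the first $z$-derivative of the warping function vanish at infinity, so I would flag that this is a feature of the chosen conformal representative and not of the conformal class as such (consistent with Remark \ref{r_AdSlighttotallygeodesic0}). Second, one should confirm that the regularity is adequate: the $C^2$ extension of $f$ comfortably exceeds the $C^{1,1}$ threshold needed for $\second{N}$ to be defined and continuous up to $\partial M$, so the evaluation of the second fundamental form at $z=z_*$ is legitimate.
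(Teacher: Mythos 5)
Your proof is correct and follows essentially the same route as the paper: the extension and timelike character of the boundary are read off from the limits of $f$, $df/dz$, $d^2f/dz^2$ computed just before the lemma, and the totally geodesic property reduces to the single fact $f'(z_*)=0$. The only cosmetic difference is that you compute the second fundamental form directly as $\pm\tfrac12\partial_z h_z$ in Gaussian normal coordinates, whereas the paper invokes O'Neill's warped-product formula — both hinge on exactly the same vanishing derivative.
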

\begin{proof}
    The first assertion has already been proved. For the last one, the fact 
     $(df/dz)(z_*)=0$ easily implies the vanishing of $^\eta II$ (for example,  $-f(z)dt^2+dz^2$ can be seen as a warped 
    product and O'Neill's formulas \cite[Prop. 7.35(2)]{oneill} apply).
\end{proof}
 
 \begin{proposition}\label{p_AdS} The conformal class of  AdS spacetime is globally hyperbolic with timelike boundary, and this boundary is lightconvex . 
  \end{proposition}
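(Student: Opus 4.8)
The plan is to read the proposition as an assembly of the two preceding lemmas, treating the three assertions (global hyperbolicity, timelike boundary, lightconvexity) separately and working throughout with the specific conformal representative $\overline{g}_{AdS}$ of \eqref{pend:eq:9}. This is legitimate because all three properties belong to the conformal class: causality depends only on the cone $\lc{}$, and infinitesimal lightconvexity is conformally invariant by Proposition \ref{p_invar_conf_lightconvex}. By Lemma \ref{l_AdS2} the manifold $\withboundaryM=\mathbb{R}\times(-\infty,z_*]\times\mathbb{S}^{n-1}$ (with the regular centre $r=0$ filled in) equipped with \eqref{pend:eq:9} is already established to be a conformal extension of AdS as a spacetime-with-boundary, with $\partial M=\{z=z_*\}$, so it remains only to check the causal features and convexity of this boundary.

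For the \emph{timelike boundary}, I would note that at $p\in\partial M$ one has $T_p\partial M=\mathrm{span}(\partial_t,T\mathbb{S}^{n-1})$, and since $f(z_*)=1>0$ the vector $\partial_t$ is timelike and tangent to $\partial M$. Hence $\lc{p}\cap T_p\partial M$ is non-empty (e.g. $\partial_t+v$ with $v$ a $g_{\mathbb{S}^{n-1}}$-unit vector, which is lightlike as $f(z_*)=1$), and because $\partial_z$ is transverse to $\partial M$ and enters the cone, this intersection is transverse. Thus $\partial M$ is a timelike boundary in the sense of the relevant definition.

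For \emph{global hyperbolicity} I would exploit the static form $-f(z)\,dt^2+dz^2+g_{\mathbb{S}^{n-1}}$ near $\partial M$ and the fact that the lapse is bounded above and below by positive constants on the whole extension (namely $f=1+r^2\in[1,2]$ for $r\le 1$, $f=(1+r^2)/r^2\in(1,5/4]$ for $r\ge 2$, and bounded on the compact transition $r\in[1,2]$; there is no blow-up, the center being a regular interior point). The decisive observation is that each slice $\{t=\mathrm{const}\}$ is a \emph{compact} spacelike hypersurface-with-boundary: topologically it is a closed $n$-ball, and its induced metric has finite diameter (the radial distance from the centre to $\partial M$ is finite, and the spheres stay bounded), so the slice is a compact metric space. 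Then $t$ is a Cauchy temporal function: it is temporal, and the lapse bound forces every inextendible causal curve to have its $z$-velocity controlled by its $t$-velocity, so it crosses each level of $t$ exactly once. Equivalently, extending the causal-diamond estimate at the end of the proof of Lemma \ref{l_AdS}, for any $p,q$ the set $J^+(p)\cap J^-(q)$ lies in the band $t(p)\le t\le t(q)$ and, by spatial compactness, is compact. This gives global hyperbolicity with timelike boundary within the framework of \cite{RMI_AFS}.

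Finally, \emph{lightconvexity} is essentially immediate: by the last assertion of Lemma \ref{l_AdS2}, $\partial M$ is totally geodesic for $\overline{g}_{AdS}$, so its second fundamental form vanishes, $\second{\eta}\equiv 0$; in particular $\second{\eta}(w,w)=0\ge 0$ for every lightlike $w\in T_p\partial M\cap\overline{A}_p$, which is exactly infinitesimal lightconvexity in the sense of Definition \ref{def:main2:12}, and by conformal invariance it is a property of the AdS cone structure. The only step carrying genuine content is the global-hyperbolicity verification: one must invoke the correct notion of Cauchy hypersurface for spacetimes-with-timelike-boundary and confirm that spatial compactness together with the two-sided lapse bound really does confine causal diamonds to compact sets. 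The timelike-boundary and lightconvexity claims, by contrast, reduce directly to the two lemmas.
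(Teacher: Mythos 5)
Your proposal is correct and follows essentially the same route as the paper: reduce to the conformal representative $\overline{g}_{AdS}$ of \eqref{pend:eq:9}, obtain global hyperbolicity from the fact that causal diamonds are trapped in the compact slab $t(p)\le t\le t(q)$ (compact once the boundary $z=z_*$ and the regular centre are included), and deduce the timelike-boundary and lightconvexity claims from the totally geodesic boundary established in Lemma \ref{l_AdS2}. The extra details you supply (the explicit transversality check at $\partial M$ and the two-sided lapse bounds) are correct but not needed beyond what the paper's one-line appeals to Lemmas \ref{l_AdS} and \ref{l_AdS2} already cover.
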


\begin{proof} 
It is enough to check the stated properties for the  conformal representative $\overline{g}_{AdS}$ in \eqref{pend:eq:9}. Global hyperbolicity follows as in  Lemma \ref{l_AdS}, because the causal diamonds 
$J(p,q)$ lie in the  set  $t(p)\leq t \leq t(q)$, which is compact when the boundary $ z= z^*$ is included.  The properties on the boundary 
are proven in  Lemma \ref{l_AdS2}.
\end{proof}

\begin{remark}\label{r_AdSlighttotallygeodesic} {\em 
    Following the discussion in Remark \ref{r_AdSlighttotallygeodesic0}, we have proven that $\partial$AdS is {\em not} strongly lightconvex but only lightconvex for the above conformal extension.
Notice that the hypersurfaces  $H_r$ at constant r, whose $g_{AdS}$-second fundamental form was lowerly bounded by $1$ in Lemma \ref{l_AdS}, are equal to the hypersurfaces at constant $z$ whose $\bar g_{AdS}$-one goes to 0, making  it  vanish on all the  lightlike vectors, that is, $\partial$AdS becomes totally lightgeodesic. As 
Proposition~\ref{p_invar_conf_lightconvex} does apply to any conformal extension of the metric on $\partial$AdS, we conclude: {\em $\partial$AdS is intrinsically totally lightgeodesic for any $C^{1,1}$ extension of its cone structure}. In particular, the observations on the lightlike geodesics of the boundary in
Remark \ref{r_totallylightconvex} applies. 
}\end{remark}

The full description of $\Ng$ is easy from Corollary \ref{cor:partialM}.

\begin{corollary}
 Any inextensible lightlike geodesic in the AdS spacetime intersects the boundary $\partial M$ both in the future and in the past (in finite time $t$). 
 
Thus, its space of cone geodesics $\Ng$ is diffeomorphic to $\R\times T\mathbb{S}^{n-1}$.

\end{corollary}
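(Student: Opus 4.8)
The plan is to reduce the whole statement to Corollary \ref{cor:partialM}. By Proposition \ref{p_AdS}, the conformal extension of AdS given by \eqref{pend:eq:9} is globally hyperbolic with lightconvex timelike boundary $\partial M=\{z=z_*\}$, so the hypotheses of Corollary \ref{cor:partialM} are already in force; it only remains to verify its qualitative assumption, namely that \emph{every} inextensible cone geodesic of $\mathring{M}$ has a future (and a past) endpoint on $\partial M$, reached at finite values of the coordinate $t$. Granting this, Corollary \ref{cor:partialM} yields $\Ng\cong\ell^{-}_{\partial M}\cong\R\times T\,\partial S$, and since $\partial M=\R\times\mathbb{S}^{n-1}$ forces $\partial S\cong\mathbb{S}^{n-1}$, we conclude $\Ng\cong\R\times T\mathbb{S}^{n-1}$.

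The substance is therefore the geodesic claim. Because lightlike pregeodesics are conformally invariant (Theorem \ref{thm:main2:3}(iv)) and the coordinate $t$ is untouched by the conformal factor, I would work in the representative \eqref{pend:eq:9}, where the static and spherical symmetries provide two first integrals along a lightlike geodesic $\gamma(s)=(t(s),z(s),\theta(s))$: the energy $E:=f(z)\dot t$ (from the Killing field $\partial_t$) and the angular speed $c:=g_{\mathbb{S}^{n-1}}(\dot\theta,\dot\theta)$, the latter constant because the round–sphere factor in \eqref{pend:eq:9} carries no $z$- or $t$-dependence, so that $\theta(s)$ is an affinely parametrized geodesic of $\mathbb{S}^{n-1}$. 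Inserting these into the lightlike condition $-f(z)\dot t^{2}+\dot z^{2}+c=0$ gives the radial equation
\[
\dot z^{2}=\frac{E^{2}}{f(z)}-c .
\]
Since $f\ge 1$ with equality only at $z_*$ (recall $f(z(r))=1+1/r^{2}$ decreases monotonically to $1$ as $r\to\infty$), an interior geodesic, which must have $\dot z\neq 0$ somewhere, necessarily satisfies $E^{2}>c\ge 0$ and $E\neq 0$.

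Next I would read off the qualitative $z$-motion from this one–dimensional ODE. At the boundary value $z=z_*$ one has $\dot z^{2}\to E^{2}-c>0$, so $z_*$ is \emph{not} a turning point: it is attained transversally, in finite affine parameter, and the coordinate time spent approaching it,
\[
\Delta t=\int \frac{E/f(z)}{\sqrt{E^{2}/f(z)-c}}\,dz ,
\]
converges, because the integrand tends to the finite limit $E/\sqrt{E^{2}-c}$ and the $z$-range up to $z_*$ is finite. Following $\gamma$ in each time direction, $z$ is monotone until it either reaches $z_*$ directly or first meets an interior turning point $z_{\min}$ (where $f(z_{\min})=E^{2}/c$, when $c>0$) and returns; in the purely radial case $c=0$ there is no turning point and $\gamma$ passes through the center $r=0$, a regular interior point of AdS whose apparent singularity at $z=-\infty$ is merely that of spherical coordinates. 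In every case $\gamma$ meets $\partial M$ at finite $t$ both to the future and to the past.

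The main obstacle is exactly this qualitative analysis: one must rule out a geodesic trapped in the interior or reaching the boundary only asymptotically, and one must handle the low–angular–momentum geodesics passing through the coordinate–singular center $r=0$. Both are settled by the sign discussion of $\dot z^{2}$ above (the boundary behaves as a ``wall'', not a turning point) together with the regularity of the origin. As an independent check, the conformal equivalence of AdS with a hemisphere of the Einstein static universe $\R\times\mathbb{S}^{n}$ turns lightlike geodesics into great circles of $\mathbb{S}^{n}$ traversed with $t$ equal to arclength, and every such great circle not contained in the equatorial $\mathbb{S}^{n-1}=\partial M$ meets it in exactly two points, recovering the boundary–to–boundary propagation in finite $t$.
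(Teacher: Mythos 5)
Your argument is correct and makes the same reduction to Corollary \ref{cor:partialM} as the paper, but the way you verify that every inextensible lightlike geodesic reaches $\partial M$ in finite $t$ is genuinely different. The paper exploits the homogeneity of AdS (its isometry group is transitive on pointed lightlike directions) to reduce to a single radial geodesic through the origin, for which the lightlike condition alone gives $dt/dr=\pm 1/(1+r^2)$ and hence a finite increase of $t$ as $r\to\infty$. You instead keep the angular momentum and run an effective-potential analysis in the rescaled coordinates $(t,z,\theta)$, using the first integrals $E=f(z)\dot t$ and $c=g_{\mathbb{S}^{n-1}}(\dot\theta,\dot\theta)$ to obtain $\dot z^{2}=E^{2}/f(z)-c$ and then reading off that $z=z_*$ is not a turning point. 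Your route is longer but more robust: it does not invoke homogeneity, would survive perturbations of $f$, and makes explicit why there are no trapped or only-asymptotically-escaping geodesics. Two small points you gloss over: (i) the claim that $\dot z\not\equiv 0$ deserves a word --- it follows from the $z$-component of the geodesic equation, $\ddot z+\tfrac{1}{2}f'(z)\dot t^{2}=0$ with $f'<0$ in the interior and $\dot t\neq 0$, or equivalently from the strong lightconvexity of the hypersurfaces $H_r$ in Lemma \ref{l_AdS}; (ii) your finiteness argument for $\Delta t$ only treats the integrand near $z_*$, while near an interior turning point it has a $1/\sqrt{z-z_{\min}}$ singularity --- integrable, so the conclusion stands, but it should be said. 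Neither point affects correctness.
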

\begin{proof}
To prove the first assertion suffices  by Corollary \ref{cor:partialM}. 
Using conformal invariance, we will consider the metric \( g_{AdS} \). 
The required property 
is well known from its Penrose diagram (see for example \cite{HawkingLargeScaleStructure1975}), but  a  computation is included for completeness. As  AdS spacetime  is homogeneous, it is enough to consider a single  lightlike  geodesic, 
\(\gamma(s) = (t(s), r(s), \theta(s))  \), thorough the origin and, thus, with constant $\theta$ 
(i.e.  zero Killing constants $
g(\partial_\theta, \dot{\gamma}(s))$).  
Using that $\gamma$ is lightlike in \eqref{pend:eq:90},  
\begin{equation} \label{AdSGeodesics}
  \begin{split}
    0 = g(\dot{\gamma}(s), \dot{\gamma}(s)) &= -(1 + r(s)^2)\dot{t}(s)^2 + \left(\frac{1}{1 + r(s)^2}\right)\dot{r}(s)^2. 
  \end{split}
\end{equation}
Thus,   $\dot r$ cannot vanish and, integrating $dt/dr= \pm 1/(1+r^2)$ one has that $r$ must diverge (otherwise, $\gamma$ would have an endpoint and, choosing a convex neighborhood around it, $\gamma$ would be extensible as a geodesic beyond). Then, the increase of the $t$-coordinate is finite when $r\rightarrow \infty$, and $\gamma$ arrives at an endpoint at the boundary ($r=\infty$), identifiable to $z=z_*$ in \eqref{pend:eq:9}. 
\end{proof}




\subsubsection{Asymptotically AdS spacetimes}


Asymptotically AdS spacetimes should preserve its boundary and infinitesimal lightconvexity. Next, this statement will be developed accurately. 

\begin{definition}
\label{def:Cuestionespendientes:1}
A spacetime $(M,g)$ is \emph{asymptotically anti-de Sitter} if $g=g_{AdS}+h$, where $h$ is a $(0,2)$-tensor satisfying, in the coordinates $h_{ij}(t,r,\theta)$ of \eqref{pend:eq:90} (with $\theta$  spherical ones in $\mathbb{S}^{n-1}$),

\begin{align}
\label{pend:eq:11}
\lim_{\bar r\rightarrow \infty} h_{ij}(t,\bar  r,\theta) = \lim_{\bar r\rightarrow \infty} \partial_rh_{ij}(t,\bar  r,\theta)=0, \qquad \qquad \qquad \qquad \hbox{for all} \quad  t , \theta,  
\\  
\label{pend:eq:11-1}
 |\partial_\theta h_{ij}(t,r,\theta)|, \;  |\partial_t h_{ij}(t,r,\theta)| < A_m \qquad \quad  \hbox{for all} \;  |t|\leq m, \; \hbox{and} \, \; r>0, \theta  . 
 \end{align}
where the bounds $A_m>0$ are possibly divergent.
\end{definition}

 This notion of asymptotically AdS is weaker than the standard definitions commonly found in the literature \cite{Henneaux1985, Fefferman-Graham-1985}
and could be even sharpened, anyway, it will be sufficient for our purposes. From the definition, equation~\eqref{pend:eq:11} guarantees that the metric  $g$ asymptotically approaches the AdS metric  $g_{\text{AdS}}$ for large values of $r$. By performing the same conformal factor and coordinate transformation as in the AdS case (see around \eqref{pend:eq:9}), we obtain a  metric $\overline{g}$  conformal to  $g$ in the coordinates $(t, z, \theta) \in \mathbb{R} \times (-\infty, z_*] \times \mathbb{S}^{n-1}$, namely:

\begin{equation}
\label{pend:eq:12}
\overline{g} = \overline{g}_{AdS} + r(z)^{-2}h.
\end{equation}
Equations \eqref{pend:eq:11} and \eqref{pend:eq:11-1} allow us to deduce both,

\begin{equation}
  \label{pend:eq:20}
  \begin{split}
    \lim_{z\rightarrow z_*} r(z)^{-2}h_{ij}=\lim_{z\rightarrow z_*}\partial_z \left( r(z)^{-2}h_{ij} \right)=0,\\
    \lim_{z\rightarrow z_*} r(z)^{-2} \partial_\theta h_{ij} = \lim_{z\rightarrow z_*} r(z)^2 \partial_t h_{ij} =0.   
  \end{split}
\end{equation}
Summing up:
\begin{proposition}
The metric $\overline{g}$ can be $C^{1,1}$ extended to the boundary $z=z^*$ and both $\overline{g}$ and its first derivatives are equal to those for $\overline{g}_{AdS}$ therein. 
\end{proposition}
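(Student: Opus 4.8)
The plan is to reduce the statement to the tensorial difference $\overline{g}-\overline{g}_{AdS}=r(z)^{-2}h$ read off from \eqref{pend:eq:12}, and to show that in the coordinates $(t,z,\theta)$ which extend smoothly across $\{z=z_*\}$ both this difference and its first partial derivatives vanish as $z\to z_*$. Since Lemma \ref{l_AdS2} already furnishes a $C^2$ (hence $C^{1,1}$) extension of $\overline{g}_{AdS}$ up to the boundary, coincidence of the $0$-jets and $1$-jets will simultaneously yield the $C^{1,1}$ extension of $\overline{g}$ and the asserted equality of $\overline{g}$ and its first derivatives with those of $\overline{g}_{AdS}$ on $z=z_*$.

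First I would record the coordinate change explicitly. From $dz=\frac{1}{r\sqrt{1+r^2}}\,dr$ one obtains $\partial_z=r\sqrt{1+r^2}\,\partial_r$ acting on functions, so that each radial index converted into a $z$-index contributes a Jacobian factor $r\sqrt{1+r^2}$. Hence, in $(t,z,\theta)$ coordinates the components of $r^{-2}h$ are the scalars $r^{-2}h_{ij}$ multiplied by suitable powers of $r\sqrt{1+r^2}$: the purely temporal and angular slots retain the factor $r^{-2}$, the mixed $z$-slots carry one factor $r\sqrt{1+r^2}$, and the doubly radial $zz$-slot carries the square $r^2(1+r^2)$.

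Second I would invoke the estimates \eqref{pend:eq:20}. For the components themselves and for their $z$-derivatives, the identity $\partial_z=r\sqrt{1+r^2}\,\partial_r$ is precisely the weight already built into the limits $\lim_{z\to z_*}r^{-2}h_{ij}=\lim_{z\to z_*}\partial_z(r^{-2}h_{ij})=0$; for the $t$- and $\theta$-derivatives one uses $\lim_{z\to z_*}r^{-2}\partial_t h_{ij}=\lim_{z\to z_*}r^{-2}\partial_\theta h_{ij}=0$, resting on the uniform bounds \eqref{pend:eq:11-1}. Continuity of all first derivatives up to the boundary gives a $C^1$ extension, and the uniform boundedness near $z=z_*$ upgrades this to Lipschitz first derivatives, that is, $C^{1,1}$.

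The point to watch---and the main obstacle---is the doubly radial $zz$-slot, where the conformal weight $r^{-2}$ meets the Jacobian factor $r^2(1+r^2)$, leaving a net growth $(1+r^2)$ that the radial--radial part of $h$ must overcome. This is exactly the slot where the asymptotic hypotheses \eqref{pend:eq:11} act most sharply, so verifying that the derived estimates \eqref{pend:eq:20} absorb this growth (both for the component and for its $\partial_z$-derivative) is the delicate step; once it is settled, the remaining slots are routine.
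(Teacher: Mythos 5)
Your strategy coincides with the paper's: the paper's entire proof of this proposition consists of deriving \eqref{pend:eq:20} from \eqref{pend:eq:11}--\eqref{pend:eq:11-1} and then ``summing up'', so reducing to the difference $r(z)^{-2}h$ of \eqref{pend:eq:12} and checking that its $0$- and $1$-jets vanish at $z=z_*$ (then invoking the $C^2$ extension of $\overline{g}_{AdS}$ from Lemma \ref{l_AdS2}) is exactly the intended argument. You are in fact more careful than the paper, because you convert the components to the chart $(t,z,\theta)$ in which the extension is actually performed, whereas \eqref{pend:eq:20} only controls the scalar functions $r^{-2}h_{ij}$ with $i,j$ still ranging over the original $(t,r,\theta)$ indices and never accounts for the Jacobian factors.

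Precisely this extra bookkeeping exposes the step you flag as delicate, and it is a genuine gap --- one you do not close, and one the paper silently skips. The $zz$-component of $r^{-2}h$ in the boundary chart is $r^{-2}h_{rr}\,(dr/dz)^2=(1+r^2)\,h_{rr}$, and Definition \ref{def:Cuestionespendientes:1} only gives $h_{rr}\to 0$ and $\partial_r h_{rr}\to 0$, which does not force $(1+r^2)h_{rr}\to 0$; the estimates \eqref{pend:eq:20} concern $r^{-2}h_{rr}$ and say nothing about $(1+r^2)h_{rr}$. The same weight problem recurs for the first derivatives of other slots: $\partial_z$ applied to the mixed component $\tfrac{\sqrt{1+r^2}}{r}h_{tr}$ produces the term $(1+r^2)\,\partial_r h_{tr}$, and $\partial_t$ of the $zz$-component is $(1+r^2)\,\partial_t h_{rr}$, which \eqref{pend:eq:11-1} only bounds by $(1+r^2)A_m$. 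So the proposition, read literally against Definition \ref{def:Cuestionespendientes:1}, requires either anisotropic (Henneaux--Teitelboim type) decay rates on the components carrying a radial index, or the convention that the $h_{ij}$ of the definition are already components in the compactified chart. Your proposal cannot be completed as written without one of these amendments; the defect lies in the hypotheses rather than in your method, and your computation is what makes it visible.
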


\begin{remark} \label{r_asin_AdS}{\em 
     As a consequence, the boundary  becomes totally geodesic, the extended metric yields a globally hyperbolic spacetime-with-timelike-boundary and all the consequences of these facts   (derived for the case of AdS) hold, as summarized in Corollary \ref{c1}. 
     
     By Proposition \ref{p_ell},   $\ell^+_{TM}$ is diffeomorphic to $\R\times T\mathbb{S}^{n-1}$, which becomes then an open subset of $\Ng$. However, $\Ng$ may contain more points in the properly asymptotic case. For example, a modification of the metric in a solid cylinder $r<R_0$ (maintaining $t$ as a temporal function) would preserve the asymptotic AdS character but might make a lightlike geodesic remain in the cylinder.} 
\end{remark}

\section{A prospective step:  causally simple spacetimes 
with $T_2$-lightspace 
} \label{s_last}

As we have seen, in the class of globally hyperbolic cone structures,   causal simplicity is  equivalent to the  Hausdorffness of $\Ng$ for the interior $\mathring{M}$. However, this happens because of the equivalence of these properties with the lightconvexity of the boundary $\partial M$, but not in general. Indeed, for causal $\lc{}$, causal simplicity is equivalent to the closedness of the causal relation $J$ (or the horismotic one $E$)  as a subset of $M\times M$ (see \cite[Proposition 3.68 (iii)]{MS}, but  
Proposition \ref{p_Rui} shows that the Hausdorffness of $\Ng$ 
depends on the closedness of the relation which defines $\Ng$ as a subset of $T\lc{} \times T\lc{}$. 
This suggest to introduce the following step in the causal ladder of spacetimes/cone structures, as an intermediate one between causally simple and globally hyperbolic.

\begin{definition} A Finsler spacetime or cone structure 
is {\em  causally simple with $T_2$-lightspace} if it is causally simple and its space of lightlike (cone) geodesics $\Ng$ is 
    Hausdorff.
\end{definition}
The  interior of globally hyperbolic spacetimes-with-timelike-boundary and lightconvex $\partial M$ is a relevant class of this new step of the causal ladder and, next, we will stress the interest of this step with other results and examples.

\subsection{Some proper (non-globally hyperbolic) examples} 

First, we will consider a  complementary class in the new level and, then, 
 both classes will be exemplified  with the family of stationary spacetimes.

\subsubsection{Causally simple spacetimes with no lightlike cut points}

The following class permits to consider the case when the boundary is not regular (or makes no sense). Recall that a lightlike geodesic $\gamma$ is free of cut points if each two points on it  are horismotically related, that is, the lightlike geodesic is maximizing for the time separation of any compatible Lorentz-Finsler metric. 

\begin{proposition}\label{p_nocutlight} Let $\lc{}$ be a causally simple cone structure (without boundary). If its cone geodesics are free of cut points then $\Ng$ is Hausdorff.  
\end{proposition}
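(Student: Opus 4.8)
The plan is to verify the closedness criterion of Proposition~\ref{p_Rui}: since causal simplicity implies strong causality, it suffices to show that the graph of the equivalence relation $\sim$ defining $\Ng$ is closed in $T\lc{}\times T\lc{}$. Fixing a complete auxiliary Riemannian metric $h$ and normalizing all lightlike vectors to be $h$-unit (so that, on this transversal, $v\sim w$ means exactly that $w$ is tangent, with matching orientation, to the cone geodesic of $v$), I would take sequences $v_k\to v_\infty$, $w_k\to w_\infty$ in $T\lc{}$ with $v_k\sim w_k$ and prove $v_\infty\sim w_\infty$. Writing $\gamma_k$ for the $h$-unit-speed cone geodesic with $\gamma_k'(0)=v_k$, there is $\ell_k$ with $\gamma_k'(\ell_k)=w_k$; after reorienting we may assume $\ell_k\geq 0$, so that $p_k:=\gamma_k(0)$ and $q_k:=\gamma_k(\ell_k)$ satisfy $p_k\leq q_k$. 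Here the hypothesis enters decisively: being free of cut points, $\gamma_k$ gives $p_k\hookrightarrow q_k$, i.e.\ no timelike curve joins $p_k$ and $q_k$.

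The argument then splits according to whether the lengths $\ell_k$ stay bounded. If $\ell_k\to\ell_\infty<\infty$ along a subsequence, continuous dependence of geodesics on initial data gives $C^1$-convergence $\gamma_k\to\gamma_\infty$ on $[0,\ell_\infty]$, where $\gamma_\infty$ is the cone geodesic with $\gamma_\infty'(0)=v_\infty$; then $w_\infty=\lim_k\gamma_k'(\ell_k)=\gamma_\infty'(\ell_\infty)$ is tangent to $\gamma_\infty$, so $v_\infty\sim w_\infty$. (The degenerate possibility $p_\infty=q_\infty$ with $\ell_\infty>0$ is excluded by causality, as it would produce a closed lightlike geodesic.) The whole difficulty is thus concentrated in the case $\ell_k\to\infty$, which is precisely the mechanism behind non-Hausdorffness.

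To rule out $\ell_k\to\infty$ I would combine causal simplicity with the no-cut-point hypothesis. Since $p_k\hookrightarrow q_k$ and $J^\pm$ are closed, $p_\infty\leq q_\infty$; and since the chronological relation is open \cite{MS}, the relations $p_k\hookrightarrow q_k$ force $p_\infty\not\ll q_\infty$, whence $p_\infty\hookrightarrow q_\infty$ (the case $p_\infty=q_\infty$ again giving a causal loop). Let $\gamma_\infty$ be the maximal $h$-unit lightlike geodesic with $\gamma_\infty'(0)=v_\infty$; $h$-completeness shows it is defined on all of $[0,\infty)$, and for each fixed $a>0$ continuous dependence gives $\gamma_k(a)\to\gamma_\infty(a)$ with $\gamma_k(a)\hookrightarrow q_k$, so the same closedness/openness pair yields $\gamma_\infty(a)\leq q_\infty$ and $\gamma_\infty(a)\not\ll q_\infty$. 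If $\gamma_\infty(a_0)=q_\infty$ for some $a_0$, comparing with a later parameter produces a closed causal curve, a contradiction; otherwise $\gamma_\infty(a)\hookrightarrow q_\infty$ for every $a$, realized by a maximizing lightlike geodesic $\sigma_a$ (whose existence uses the closedness of $J$). The concatenation of $\gamma_\infty|_{[0,a]}$ with $\sigma_a$ is then a causal curve between the horismotically related points $p_\infty,q_\infty$; were it broken at $\gamma_\infty(a)$, the corner-smoothing lemma (the cone/Finsler version of \cite[Lemma 10.45]{oneill} used in \cite[Lemma 6.3]{AaJa}) would deform it into a timelike curve, giving $p_\infty\ll q_\infty$, a contradiction. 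Hence $\sigma_a$ prolongs $\gamma_\infty$ and $q_\infty=\gamma_\infty(a')$ for some $a'>a$, contradicting $\gamma_\infty(a')\hookrightarrow q_\infty$ when $a=a'$. Thus $\ell_k\to\infty$ is impossible, the bounded case applies, and $\sim$ has closed graph.

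I expect the main obstacle to be exactly this case $\ell_k\to\infty$: controlling the future-inextendible limit geodesic $\gamma_\infty$ (ensuring it is a genuine cone geodesic defined on all of $[0,\infty)$, for which $h$-completeness and a limit-curve argument are needed) and deploying the corner-smoothing lemma in the anisotropic setting. The remaining ingredients---closedness of $J^\pm$, openness of $\ll$, and continuous dependence of cone geodesics on initial data---are all available from causal simplicity and the references already invoked in the paper.
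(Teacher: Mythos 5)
Your proof is correct, and it runs on the same causal-theoretic engine as the paper's own argument --- closedness of $J$ from causal simplicity, openness of $\ll$, the fact that a causal curve joining two points that are not chronologically related must be an unbroken lightlike pregeodesic (the corner-smoothing lemma of \cite[Lemma 6.3]{AaJa} extending \cite[Lemma 10.45]{oneill}), and uniqueness of geodesics with a given initial direction --- but it is organized quite differently. The paper argues directly on a sequence of cone geodesics converging to two distinct limits $\gamma_1,\gamma_2$: the no-cut-point hypothesis gives $p\hookrightarrow p'\le q'\hookrightarrow q$ for four points chosen with the help of convex neighborhoods ($p,p'$ on $\gamma_1$, $q',q$ on $\gamma_2$), and the connecting causal curve $\rho$ is then forced to weld $\gamma_1$ and $\gamma_2$ into a single pregeodesic. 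You instead verify the closed-graph criterion of Proposition~\ref{p_Rui}, fix a complete auxiliary metric, and split on whether the parameter lengths $\ell_k$ stay bounded; the bounded case is pure continuous dependence (no causality used), and all the causal input is concentrated in excluding $\ell_k\to\infty$, where your contradiction ($q_\infty$ both must and cannot lie on the single limit geodesic $\gamma_\infty$) replaces the paper's welding of two limit geodesics. This buys a clean conceptual statement --- under the hypotheses, non-Hausdorffness could only arise from maximizing segments of unbounded auxiliary length whose endpoints nevertheless converge --- at the cost of some completeness bookkeeping: you need the standard argument that an inextendible cone geodesic in $h$-arclength parametrization has domain $[0,\infty)$ not only in the unbounded case but also, implicitly, to know that $\gamma_\infty$ reaches parameter $\ell_\infty$ in the bounded case. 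Two harmless imprecisions: the causal curve from $\gamma_\infty(a)$ to $q_\infty$ exists by the definition of $\le$ once closedness of $J$ has delivered $\gamma_\infty(a)\le q_\infty$ (closedness is not what produces the curve); and before ``reorienting'' you should pass to a subsequence on which $\ell_k$ has a fixed sign. Neither affects the validity of the argument.
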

\begin{proof}
    Assume by contradiction that there exists a sequence of inextendible lightlike geodesics $\{\gamma_m\}$,  $m>3$, converging to other two geodesics $\gamma_1, \gamma_2$ and, thus, two sequences of horismotically related points $\gamma_m(s_m)=p_m \hookrightarrow q_m=\gamma_m(t_m)$ (with $s_m< t_m$) on each $\gamma_m$ and two points $p=\gamma_1(s_\infty)$, $q=\gamma_2(t_\infty)$
    such that the corresponding directions of the velocities converge, that is, 
    $$\gamma'_{m}(s_m)\rightarrow \gamma'_1(s_\infty), \qquad  
    \gamma'_{m}(t_m)\rightarrow \gamma'_2(t_\infty), \qquad s_\infty<t_\infty.$$ 
    Considering convex neighborhoods $U_p, U_q$ around $p$ and $q$ resp., we can also choose points $p_m'=\gamma_m(s'_m)$, $p'=\gamma_1(s'_\infty)$ in $U_p$ and $q_m'=\gamma_m(t'_m)$, $q'=\gamma_2(t'_\infty)$ in $U_q$ such that 
    $$p_m \hookrightarrow p'_m \hookrightarrow q'_m  \hookrightarrow q_m \quad \hbox{and} \quad \gamma'_{m}(s'_m)\rightarrow \gamma'_1(s'_\infty), \quad 
    \gamma'_{m}(t'_m)\rightarrow \gamma'_2(t'_\infty), \; \quad  s_\infty<s_\infty'<t_\infty'<t_\infty.  $$ 
    By causal simplicity, the relation $\leq $ is closed and
    $$p\hookrightarrow p' \leq q' \hookrightarrow q .$$
    
    Then, there exists a causal curve $\rho$ from $p'$ to $q'$. This curve must be a lightlike pregeodesic and the direction of its velocity at $p'$ and $q'$ must agree with $\gamma_1(s_\infty')$ and $\gamma_2(t'_\infty)$. Indeed, otherwise $p\ll q$ and, for large $m$, $\gamma'_m(s_m)\ll \gamma'_m(t_m) $ (as the relation $\ll$ is open), in contradiction with the maximizing character of each $\gamma_m$. But then $\gamma_1$, $\rho$ and $\gamma_2$ match in a single pregeodesic,  a contradiction again. 
\end{proof}
\begin{remark}\label{r_nocutlight}
    {\em (1) The Lorentzian product $\Lo\times \SSS ^n$ shows that, obviously, the converse to Proposition~\ref{p_nocutlight}  does not hold (even if $\lc{}$ is globally hyperbolic). 

    (2)  The Lorentzian product $\Lo\times S$ where $S$ is any convex open subset of a Cartan-Hadamard manifold (for example, the interior of a square in $\R^2$) lies in the assumptions of Proposition \ref{p_nocutlight}. However, it is not a (smooth) spacetime with timelike boundary.
    }
\end{remark}

\subsubsection{Standard stationary spacetimes} \label{s_stationary}
Illustrative examples of globally hyperbolic spacetimes-with-timelike-boundary can be found among standard stationary ones, 
as their  causal properties  naturally involve  Finslerian elements (which become Riemannian in the case of Lorentzian products). 
Specifically, consider a standard stationary spacetime
\begin{equation}
\label{eq:19}
M = \mathbb{R}\times  S\quad \text{with} \quad g = -dt^2  + 2\omega dt + g_S,
\end{equation}
where \((S, g_S)\) is a Riemannian manifold (possibly with boundary) and $\omega$ is (the pullback of) a 1-form on\footnote{The general definition of standard stationary spacetime includes the lapse $\Lambda$ on $S$ multiplying $dt^2$. However, it can be reabsorbed here in the other terms with the conformal change $g\mapsto g/\Lambda$.} $S$ . The Fermat metric on $S$ is the Finsler metric $$F=\omega  + \sqrt{g_S+\omega^2}.$$
Even though the cone structure come from quadrics, the description of the causal properties in terms of objects on $S$ involves Finsler elements, as extensively studied in \cite{Caponio_2011, FHS_Memoirs}.  Indeed, 
$F$ reduces to $g_S$ in the static case ($\omega=0$), otherwise, $F$ is a non-reversible Finsler metric and its associated distance $d_F$ is non-symmetric. In particular, one has to distinguish between forward and backwards balls (depending whether the distance is taken from the center of the ball to the point or the the other way round).
Following \cite[Proposition 4.1]{Caponio_2011}, a curve \(\gamma(s) = (t(s),\sigma(s))\) is a (future-directed) lightlike (pre-)geodesic if and only if \(\sigma\) is a geodesic for $(S,F)$ and \(t'(s) = F(\sigma'(s))\). This implies that, when $s$ increases, the cutpoints of $\sigma $ and $\gamma$ agree and, moreover, the following results also hold:
\begin{itemize}
    \item \cite[Proposition 4.3]{Caponio_2011}. In the case without boundary: 
    
    $ (M, g)$ is causally simple if and only if $(S, F)$ is convex, in the sense that each $p,q\in S$ can be joined by a  geodesic from $p$ to $q$ minimizing the distance $d_F$.  

$ (M, g)$ is globally hyperbolic if and only if $(S, F)$ is complete, in the (weakest) sense that the intersections between its closed forward and backward balls 
are compact.
    
    \item \cite[Theorem 1.3]{Bartolo_2010}. Assume that $S$ has a  smooth  boundary $\partial S$ and $(S,F)$ is complete (in the sense above). $(\mathring{S}, F)$ is convex if and only if $\partial S$ is convex.   
        \end{itemize}

Then, using Proposition \ref{p_nocutlight}:

\begin{proposition}\label{p_standardstationaryNOCUT}
A standard stationary spacetime (as above) with no boundary is causally simple with $T_2$-lightspace if $(S,F)$ is convex and its geodesics has no cut points. 
\end{proposition}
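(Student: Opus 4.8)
The plan is to deduce both required properties---causal simplicity and Hausdorffness of $\Ng$---from the Fermat-metric dictionary for standard stationary spacetimes, reducing the second one to Proposition \ref{p_nocutlight}. First I would obtain causal simplicity directly: since $(S,F)$ is convex, \cite[Proposition 4.3]{Caponio_2011} (the boundaryless case) asserts that $(M,g)$ is causally simple. It then suffices to verify the remaining hypothesis of Proposition \ref{p_nocutlight}, namely that the cone geodesics of $(M,g)$ are free of cut points.

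To establish this, I would use the explicit form of the future-directed lightlike pregeodesics recalled above: any such $\gamma(s)=(t(s),\sigma(s))$ has $\sigma$ an $F$-geodesic and $t'(s)=F(\sigma'(s))$, so that $t(s_2)-t(s_1)$ equals the $F$-length of $\sigma|_{[s_1,s_2]}$ for $s_1<s_2$. The key step is the cut-point correspondence: because the causal relation in a standard stationary spacetime is governed by the (non-symmetric) Fermat distance $d_F$---with $\gamma(s_1)\leq\gamma(s_2)$ iff $t(s_2)-t(s_1)\geq d_F(\sigma(s_1),\sigma(s_2))$, and the inequality strict exactly when $\gamma(s_1)\ll\gamma(s_2)$---the two points $\gamma(s_1),\gamma(s_2)$ are horismotically related if and only if the $F$-length of $\sigma|_{[s_1,s_2]}$ equals $d_F(\sigma(s_1),\sigma(s_2))$, i.e. if and only if $\sigma$ minimizes between them. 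Thus the cut points of $\gamma$ coincide with those of its underlying $F$-geodesic $\sigma$, exactly as anticipated in the discussion preceding the statement.

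With this in hand, the hypothesis that the geodesics of $(S,F)$ have no cut points (equivalently, are globally forward-minimizing) immediately forces every cone geodesic to be free of cut points. Applying Proposition \ref{p_nocutlight} to the boundaryless, causally simple cone structure of $(M,g)$ yields the Hausdorffness of $\Ng$, which together with the causal simplicity already obtained gives the claim.

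I expect the main obstacle to be making the cut-point correspondence fully rigorous, and in particular keeping careful track of the non-symmetry of $F$ and $d_F$: one must ensure that the property "free of cut points" for $\gamma$ is matched against the forward-minimizing property of $\sigma$ in the correct time orientation, and that the characterization of $\leq$, $\ll$ and $\hookrightarrow$ via $d_F$ is applied consistently. Everything else amounts to routine bookkeeping with the stationary-to-Finsler dictionary of \cite{Caponio_2011}.
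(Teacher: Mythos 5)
Your proposal is correct and follows essentially the same route as the paper: causal simplicity comes from the convexity of $(S,F)$ via \cite[Proposition 4.3]{Caponio_2011}, the cut points of a lightlike geodesic $\gamma=(t,\sigma)$ are identified with those of its underlying Fermat geodesic $\sigma$ (the paper asserts this correspondence just before the bullet list, and your $d_F$-based justification is the intended one), and then Proposition \ref{p_nocutlight} yields Hausdorffness of $\Ng$. The paper treats this as an immediate consequence of the preceding discussion and gives no further proof, so your more explicit verification of the horismos/minimization dictionary is a faithful filling-in of the same argument.
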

        
The cited ideas in \cite{Caponio_2011} above extend directly to the case  with timelike boundary yielding:

\begin{proposition}
    A standard stationary spacetime with timelike boundary 
$(M, g)$ 
is globally hyperbolic  if and only if $(S, F)$ is complete, in the  sense that the intersections between its closed forward and backward balls (in the whole  $S=\mathring{S} \cup \partial S) $ are compact.
In this case, they are equivalent: 

(a)  $ (\mathring{M}, g)$ is causally simple, and 

(b)   $\partial S$ is convex with respect to the Fermat metric $F$.

\end{proposition}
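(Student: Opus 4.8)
The plan is to translate everything into the Fermat metric $F$ on $S$ via the correspondence of \cite[Proposition 4.1]{Caponio_2011} recalled just above: a future-directed causal curve $\gamma(s)=(t(s),\sigma(s))$ satisfies $t'(s)\geq F(\sigma'(s))$, with equality exactly for the lightlike pregeodesics (for which $\sigma$ is an $F$-geodesic). Consequently, for $p=(t_p,x_p)$ and $q=(t_q,x_q)$ one has $q\in J^+(p)$ if and only if $t_q-t_p\geq d_F(x_p,x_q)$, so that the projection to $S$ of any causal diamond $J^+(p)\cap J^-(q)$ is an intersection of a closed forward and a closed backward $d_F$-ball. The two assertions then follow by combining the boundary causality of \cite{RMI_AFS} with the two Finslerian results cited immediately above, namely \cite[Prop.\ 4.3]{Caponio_2011} and \cite[Thm.\ 1.3]{Bartolo_2010}.

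For the first equivalence I would argue exactly as in \cite[Proposition 4.3]{Caponio_2011}, now allowing the spatial curves $\sigma$ to meet $\partial S$. Global hyperbolicity of $(M,g)$ means that every causal diamond $J^+(p)\cap J^-(q)$ is compact; under the projection above this is equivalent to compactness of the intersection of the corresponding closed forward and backward balls computed in the whole $S=\mathring S\cup\partial S$ (the $t$-coordinate being automatically confined to $[t_p,t_q]$). Conversely, completeness in this weak sense bounds the $t$-range and makes the diamonds closed and bounded, hence compact. The only point requiring care with respect to the boundaryless case is that causal curves may touch $\partial M=\mathbb{R}\times\partial S$; here one uses that the admissible (locally Lipschitz) causal curves of \cite{RMI_AFS} project to $F$-admissible curves in $S$ that may run along $\partial S$, so that $d_F$ is indeed the infimum over all such curves in the full $S$.

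Granting global hyperbolicity, $(M,g)$ is in particular causally simple, and its interior $(\mathring M,g)=\mathbb{R}\times\mathring S$ is again a standard stationary spacetime, now without boundary, whose Fermat metric is the restriction of $F$ to $\mathring S$. Applying \cite[Proposition 4.3]{Caponio_2011} to this boundaryless spacetime gives that $(\mathring M,g)$ is causally simple if and only if $(\mathring S,F)$ is convex. Since global hyperbolicity already secured completeness of $(S,F)$, \cite[Theorem 1.3]{Bartolo_2010} applies and yields that $(\mathring S,F)$ is convex if and only if $\partial S$ is convex with respect to $F$; chaining these two equivalences proves (a)$\Leftrightarrow$(b). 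Alternatively, one may run the argument through Theorem \ref{thm:main2:4}: causal simplicity of $\mathring M$ is equivalent to lightconvexity of $\partial M$, and a lightlike geodesic $\gamma=(t,\sigma)$ is tangent to $\partial M=\mathbb{R}\times\partial S$ precisely when $\sigma$ is tangent to $\partial S$ (as $\partial_t$ is always tangent to $\partial M$), while $\gamma\subset\partial M$ iff $\sigma\subset\partial S$; hence lightconvexity of $\partial M$ is literally convexity of $\partial S$ for $F$.

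The main obstacle is the careful extension of the argument of \cite{Caponio_2011} to the boundary in the first part: one must check that the identification of causal diamonds with intersections of Fermat balls survives the presence of $\partial S$, and that the correct notion of completeness is the one measured by balls in the full $S$ rather than in $\mathring S$. This rests on the boundary-causality machinery of \cite{RMI_AFS} (Proposition \ref{RMI}), guaranteeing that the relevant relations and their closures behave as in the boundaryless case, together with the fact that lightlike pregeodesics tangent to $\partial M$ project to $F$-geodesics tangent to $\partial S$, which is exactly the link exploited in the second part.
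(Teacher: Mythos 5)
Your proposal is correct and follows essentially the same route as the paper, which gives no explicit proof beyond stating that the ideas of \cite{Caponio_2011} extend directly to the timelike-boundary case and that (a)$\Leftrightarrow$(b) follows by chaining \cite[Prop.~4.3]{Caponio_2011} (applied to the boundaryless interior $\mathbb{R}\times\mathring{S}$) with \cite[Thm.~1.3]{Bartolo_2010}; your alternative route via Theorem \ref{thm:main2:4} and lightconvexity of $\partial M$ is precisely what the paper invokes immediately afterwards to add the third equivalence (c). The only minor imprecision is the claim that $q\in J^+(p)$ iff $t_q-t_p\geq d_F(x_p,x_q)$: in general only the forward implication holds (the converse needs the infimum to be attained, i.e.\ convexity), but the resulting inclusion of the causal diamond in the closed ball intersection is all the compactness argument requires, so nothing breaks.
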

As a consequence of Theorem \ref{thm:main2:4}, one can add a third equivalence above: (c)  $\partial M$ is lightconvex for $g$. However, convexity is a local property and any point of $\partial M$ admits a  neighborhood which is globally hyperbolic with timelike boundary. So, one has the following sharper consequence (that can also be obtained by algebraic computations at each point).

\begin{corollary}
    For a standard spacetime with timelike boundary, 
$\partial M$ is lightconvex for $g$ if and only if $\partial S$ is convex for the Fermat metric $F$.
\end{corollary}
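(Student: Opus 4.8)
The plan is to reduce the statement to the correspondence, already recalled above from \cite{Caponio_2011} and extending to the boundary case, between (future-directed) lightlike pregeodesics of $g$ and geodesics of the Fermat metric $F$ on $S$: a curve $\gamma(s)=(t(s),\sigma(s))$ is a lightlike pregeodesic of $g$ if and only if $\sigma$ is an $F$-geodesic with $t'(s)=F(\sigma'(s))$. Both notions in the corollary — lightconvexity of $\partial M$ and convexity of $\partial S$ — are \emph{local} conditions asking that the relevant geodesics, once tangent to the respective boundary, remain inside it. So the idea is simply to translate one condition into the other through the projection $\pi:M=\mathbb{R}\times S\to S$, under which $\partial M=\mathbb{R}\times\partial S=\pi^{-1}(\partial S)$.

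First I would record the elementary geometric dictionary. Since $\partial M=\mathbb{R}\times\partial S$ and the $\partial_t$-direction is always tangent to $\partial M$, a velocity $\dot\gamma(0)=(t'(0),\sigma'(0))$ is tangent to $\partial M$ if and only if $\sigma'(0)\in T\,\partial S$; and $\gamma$ is contained in $\partial M=\pi^{-1}(\partial S)$ if and only if its projection $\sigma$ is contained in $\partial S$. With this, the two implications are immediate. For ($\Leftarrow$), assume $\partial S$ is convex and take a lightlike geodesic $\gamma=(t,\sigma)$ with $\dot\gamma(0)$ tangent to $\partial M$; then $\sigma$ is an $F$-geodesic with $\sigma'(0)\in T\,\partial S$, so convexity gives $\sigma\subset\partial S$ and hence $\gamma\subset\partial M$, i.e.\ $\partial M$ is lightconvex. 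For ($\Rightarrow$), assume $\partial M$ is lightconvex and take an $F$-geodesic $\sigma$ tangent to $\partial S$; lifting it to $\gamma=(t,\sigma)$ with $t(s)=\int_0^s F(\sigma'(u))\,du$ produces a lightlike geodesic whose velocity is tangent to $\partial M$, so lightconvexity forces $\gamma\subset\partial M$ and therefore $\sigma\subset\partial S$, i.e.\ $\partial S$ is convex.

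The only point requiring care — and the mild obstacle I expect — is the orientation convention. The correspondence is stated for future-directed pregeodesics, whereas a past-directed lightlike geodesic projects to a geodesic of the reverse Fermat metric $\tilde F(v):=F(-v)$. This causes no trouble: $g$ is Lorentzian, so its second fundamental form is direction-independent and infinitesimal lightconvexity is symmetric under $w\mapsto -w$; moreover the point set swept by a geodesic of $F$ coincides with that of the corresponding geodesic of $\tilde F$, so convexity of $\partial S$ for $F$ and for $\tilde F$ is the same condition. The secondary local-versus-global phrasing of convexity is dealt with exactly as in footnote~\ref{f_localconvexity}.

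Finally, I would note that the same conclusion follows by the route suggested in the preceding remark: every $p\in\partial M$ admits a neighborhood that is globally hyperbolic with timelike boundary, so the chain of equivalences (c) $\Leftrightarrow$ (a) $\Leftrightarrow$ (b) furnished by Theorem~\ref{thm:main2:4} and the preceding Proposition applies pointwise; or, more computationally, by comparing the two second fundamental forms directly at each boundary point. The argument via the Fermat correspondence is, however, the most economical, since it never invokes any causal assumption on $M$.
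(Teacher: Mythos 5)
Your argument is correct, but it follows a different route from the one the paper actually takes. The paper derives the corollary by localization through causality: every point of $\partial M$ has a neighborhood that is globally hyperbolic with timelike boundary, so the preceding Proposition (causal simplicity of $\mathring M$ $\Leftrightarrow$ convexity of $\partial S$ for $F$) together with Theorem~\ref{thm:main2:4} (lightconvexity of $\partial M$ $\Leftrightarrow$ causal simplicity of $\mathring M$) yields the equivalence pointwise --- this is exactly the route you relegate to your final paragraph. Your primary argument instead goes directly through the Fermat correspondence: lightlike pregeodesics of $g$ project to $F$-geodesics with $t'=F(\sigma')$, tangency to $\partial M=\R\times\partial S$ is equivalent to tangency of the projection to $\partial S$, and containment likewise, so \emph{local} lightconvexity of $\partial M$ translates verbatim into \emph{local} convexity of $\partial S$ (the infinitesimal versions then following from Theorem~\ref{t1} and \cite{Bartolo_2010}). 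This is essentially the ``algebraic computation at each point'' the paper alludes to but does not carry out, executed at the level of geodesics rather than second fundamental forms; it is more elementary, as it invokes no causal hypothesis and none of the machinery of Theorem~\ref{thm:main2:4}, whereas the paper's route comes for free once the stationary Proposition is in place. Your worry about past-directed geodesics and the reverse Fermat metric is actually superfluous: lightconvexity in Definition~\ref{def:main2:12} concerns only the vectors in $\bar A_p$, i.e.\ the future cone, and as $w$ ranges over the future lightlike directions in $T_p\partial M$ its projection $v=\sigma'(0)$ already sweeps out all of $T_p\partial S\setminus\{0\}$ (each nonzero $v$ lifting to the unique future lightlike vector $(F(v),v)$), so the correspondence with $F$-geodesics tangent to $\partial S$ is exact without any appeal to $\tilde F$; your resolution of the non-issue is nevertheless harmless and correct.
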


\subsection{Some proper (causally simple) counterexamples}\label{s_counterexamples}

Next, we will give two illustrative examples of causally simple spacetimes with non $T_2$-lightspace.
The first one  refines  Hedicke and Suhr's \cite{Hedicke_2019} and stresses the importance of the smoothness of the timelike boundary (compare with Remark \ref{r_nocutlight} (2)).
The second one is inspired in  a Riemannian example  in \cite{BGS} about subtle properties on convexity.

Both counterexamples are products $\Lo\times S$. Thus, they reduce to construct a Riemannian manifold \((S, g_S)\) that is convex (but not complete) and contains a sequence of geodesics that naturally converges (in the sense explained at the end of  \S \ref{subsub_dif_est_n}) to two distinct geodesics. It is interesting to check the appareance of cut points and impossibility of a smooth timelike boundary, according to our previous results.


\subsubsection{A simple flat Lorentzian product}
\label{simpleflatlorentzian}
Let $(S,g_S)$ be a conic surface without vertex constructed as follows (see Figure \ref{fig:2}). Let $(r,\theta)$ be the polar coordinates in $\mathbb{R}^2$ and $\overline{S}:=\left\{ (r,\theta):r\in (0,1),\theta\in [\pi/4,-\pi/4]\right\}$. Define  $S=\overline{S}/\sim$, where $\sim$ is the relation of equivalence  gluing together the segments with $\theta=\pi/4,-\pi/4$ by means of  local rotations of $\pi/2$. 

From the construction, the cone $S$ is convex (notice that any two points can be joined by a minimizing geodesic) and, thus, the associated Lorentzian product $\Lo \times S$ is causally simple.  However, a glance at 
Figure~\ref{fig:2} shows that there are two minimizing geodesics   from each dotted point  to each point in the identified  arrow (by the rotational symmetry of the model, this property extends to any pair of points in opposed radial segments from the origin). Thus, these geodesics attain  a cut point therein and Proposition~\ref{p_nocutlight} will not be applicable. 

To check that  $\Ng$ is not Hausdorff, just consider in Cartesian coordinates $(x,y)$ the sequence of inextendible geodesics $\sigma_k:(a_k^-,a_k^{+})\rightarrow S$,  $\sigma_k(0)=(-1/k,0)$ and $\sigma'(0)=\partial_{y}$ (thus, $a_k^{\pm}\rightarrow \pm 1$). Then, the sequence $\left\{ \sigma_k \right\}_k$ converges to $\sigma_{\pm}$, where  $\sigma_-:(-1,0)\rightarrow S$, $\sigma_+:(0,1)\rightarrow S$ and $\sigma_{\pm}(s)=(0,s)$, as depicted in Figure \ref{fig:2}.
Thus, the lifted sequence of lightlike geodesics $\gamma_k:(-a_k,a_k)\rightarrow M=\R\times S$, $\gamma_k(s)=(s,-1/k,s)$ converge to both 
$ \gamma_+:(0,1)\rightarrow M, \gamma_{-}:(-1,0)\rightarrow M$, $\gamma_{\pm}(s)=(s,0,s)$, as required. 
\begin{remark}
{\em It is worth pointing out that any two points $p_{\pm}\in \gamma_{\pm}$ must be chronologically related. This comes from general arguments, as these $(p_-,p_+)$ lie in the boundary of the causal relation $J\subset M\times M$ (which is closed by causal simplicity) and they cannot be connected by a lightlike geodesic. Anyway, it  can be checked directly because of the mentioned property about existence of cut points for geodesics connecting radially opposed points. }   
\end{remark}




\begin{figure}
  \centering
  \begin{tikzpicture}[scale=1.3,every node/.style={transform shape}]
    \pic{carc=30:330:2cm} [line width = 1mm];
    \draw [line width = 1mm] (0,0) -- +(30:2.04cm);
    \draw [line width = 1mm] (0,0) -- +(-30:2.04cm);
    \draw [line width = 0.6mm, ->] (0,0) -- +(-30:1.7cm);
    \draw [line width = 0.6mm, ->] (0,0) -- +(-30:1.3cm);
    \draw [line width = 0.6mm, ->] (0,0) -- +(30:1.7cm);
    \draw [line width = 0.6mm, ->] (0,0) -- +(30:1.3cm);
    \foreach \x in {1,1.5,3,6}{
      \filldraw[fill=black] (-1/\x,0) circle (0.65mm);
      \draw [dashed] (-1/\x,-1.5) -- (-1/\x,1.5);
    }
    \draw [dashed,line width = 0.7mm] (0,-1.5) -- (0,1.5);
    \filldraw[fill=white, draw=black] (0,0) circle (0.85mm);
    \node at (-1.3,1) {\large$\sigma_k$};
    \node at (0.6,1.3) {\large$\sigma_{+}$};
    \node at (0.6,-1.3) {\large$\sigma_{-}$};    
  \end{tikzpicture}
  \caption{\label{fig:2} A pictorial representation of the Riemannian manifold $(S,g)$ described at the beginning of \S \ref{simpleflatlorentzian}. The identification of the arrowed radii yields a convex incomplete conic surface. The vertical segments $\sigma_k$ are geodesics converging to both vertical limits $\sigma_-$ and $\sigma_+$. The product $\Lo\times S$ is causally simple with no $T_2$ lightspace.}
\end{figure}
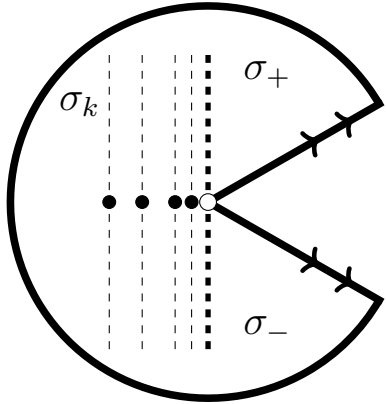

\subsubsection{Causal simplicity with $T_2$-lightspace non-preserved by a limit} \label{s522}

The example in \cite[Section 2.1]{BGS} shows a non-convex Riemannian   manifold $S$ without boundary exhausted by a  sequence of  compact domains $\{D_m\}, D_m \subset \mathring{D}_{m+1}$, each one with convex boundary $\partial D_m$ and, thus, convex interior $\mathring{D}_{m}$. 
 Taking the corresponding Lorentzian products, the sequence $\{\Lo\times D_m\}$ is composed by globally hyperbolic spacetimes-with-timelike-boundary 
 satisfying:
{\em each $\Lo\times D_m$ has  causally simple interior $\Lo\times \mathring{D}_m$ 
but the exhausting sequence 
 converges (uniformly on compact subsets) to a non-causally simple one, $\Lo\times S$}.\footnote{This Riemannian example was also used by  Hedicke et al. \cite{Hedicke_2021} to illustrate other properties related to causal simplicity.}  

Our next example shows deals with a converging sequence as above yielding now: 
\begin{quote}
 {\em There is a causally simple spacetime $\Lo \times S$  exhausted by a sequence $\{\Lo\times D_m\}$ of globally hyperbolic spacetimes-with-timelike-boundary and causally simple interior (thus, each one with $T_2$-lightspace)   
which is not a $T_2$-lightspace.}
\end{quote}
 As depicted in Figure \ref{fig:3},  consider the plane $\mathbb{R}^2$ and, for $k=1,2\dots$,  the rectangles 
 \[R_k:=[a_k,b_k]\times [c_k,d_k]:= \left[1-\frac{1}{k}-\frac{1}{3k(k+1)},1-\frac{1}{k}+\frac{1}{3k(k+1)}\right]\times \left[-1+\frac{1}{2^k},1-\frac{1}{2^{k}}\right].\]
Let $S$ be the open subset obtained by removing the vertical segments inside $R_k$:
$$V_k:= \left( 
\{x_k=\frac{a_{k}+b_k}{2}\}\times \left[-1+\frac{1}{2^{k +  1}},1-\frac{1}{2^{k + 1}}\right]
\right), $$
that is, $S=\left((-\infty,1)\times \mathbb{R} \right)  \setminus (\cup_k V_k)$. Finally, consider the Riemannian metric $g_S$ on $S$ which is equal to the usual one  outside all the rectangles and making each rectangle
$R_k\setminus V_k$ 
complete (with the distance induced by $g_S$ restricted to it).  

To check the required properties of $(S,g_S)$, note that, for each $l_m=(b_m+a_{m+1})/2$,  the region $D_m:=\{x \leq l_m\}$ is complete and has convex boundary. Therefore, for each two point $z_1, z_2\in  D_m$ at each homotopy class of curves in $D_m$ connecting $z_{1}$ and $z_{2}$, there is a geodesic  which minimizes the length in that class.   In particular, $\R\times D_m$ becomes globally hyperbolic and its interior is causally simple with $T_2$-lightspace. 

Moreover, $S$ is convex because, once $z_1, z_2$ are prescribed, one can take $m$ equal to the minimum integer such that $z_1, z_2\in \mathring{D}_m$ and, then,  the geodesic minimizing in $D_m$ also minimizes in $S$. Thus, $\R\times S$ is causally simple. 

It remains  to check the non-Hausdorffness of $\R\times S$, consider the points $z_\pm=(0,\pm 1)$ and the ``horizontal'' inextensible  geodesics $\sigma_\pm(s)=(s,\pm 1 ), s\in [0,1)$.
For each $m$, consider the homotopy 
class $H_m$ of  curves from $z_-$ to $z_+$ contained in $D_{m+1}$ generated by the curve $c_m$  
obtained as follows:  concatenate 
$\sigma_-|_{[0,l_m]}$ with the vertical curve $s\mapsto (l_m,s), s\in [-1,1]$ and with the horizontal one $\sigma_+(l_m-s)$, $s\in [0,l_m]$.

Let $\sigma_m$ be the geodesic connecting $z_-$ and $z_+$
which minimizes the length in the class $H_m$.  As $g$ is flat outside the rectangles $R_k$, each $\sigma_k$ must be a straight line on each interval where it lies outside $R_k$. 
Thus, necessarily the directions of the sequence 
of velocities $\{\sigma'_k\}$ at $z_\pm$ must converge to those of $\{\sigma'_\pm\}$. 
Therefore, the sequence $\{\sigma_m\}$ converges in $S$ to both geodesics, $\sigma_-$ and $\sigma_+$.  

Once this property is established for the $\sigma_m$'s in $S$, consider their lightlike liftings $\{\gamma_m\}$ starting at $(0, z_-)$ and arriving at points $(T_m, z_+)$ with $1 < T_m < 4$. This number is the sum of the lengths of $\sigma_-$, $\sigma_+$, and any vertical segment $s \mapsto (l_m, s)$ connecting them. Thus, 4 is an upper bound for the lengths of all the curves $c_m$ generating the homotopy classes $H_m$.
. This bound implies that, up to a subsequence, the endpoints of $\gamma_m$ converge to a point $(T_\infty, z_+)$. Therefore, the sequence of lightlike geodesics  $\{\gamma_m\}$ converges to both the lightlike lifting of $\sigma_-$ starting at $(0,z_-)$ and the one of $\sigma_+$ starting at $(T_\infty, z_+)$, as required.

  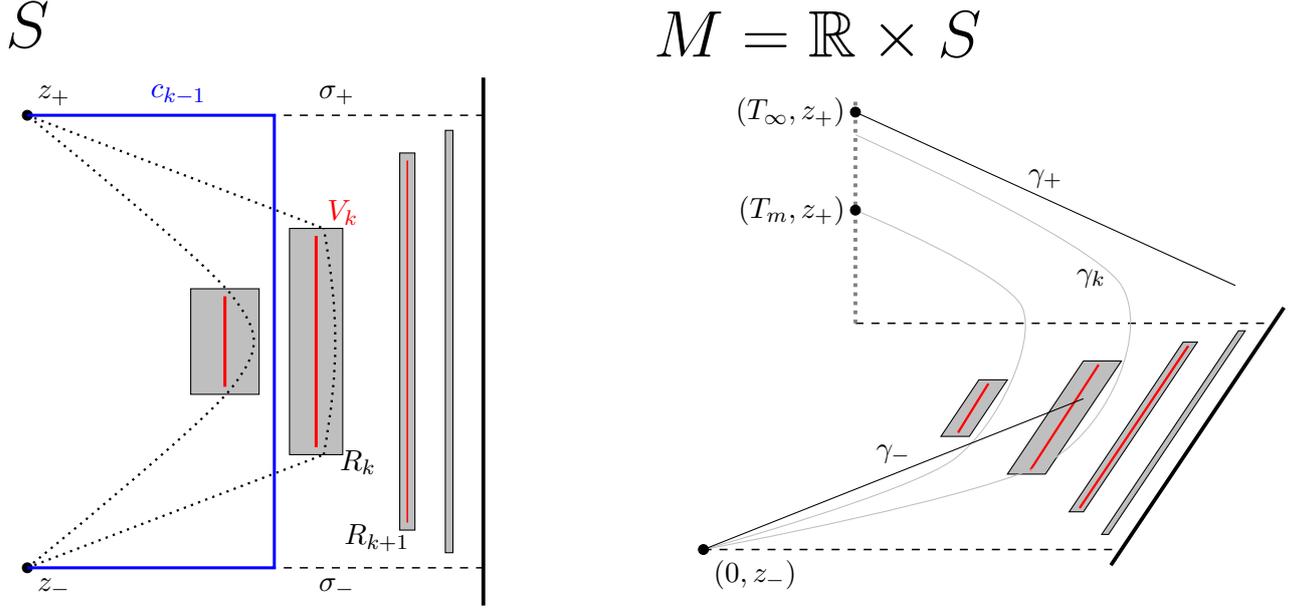
\begin{figure}
    \centering
    \begin{minipage}{0.4\paperwidth}
      \begin{tikzpicture}
        \draw [line width = 0.2mm, dashed] (-6,-3) -- node[below right, xshift=0.7cm] {$\sigma_-$} (0,-3);
        \draw [line width = 0.2mm, dashed] (-6,3) -- node[above right,, xshift=0.7cm] {$\sigma_+$} (0,3);
        \draw [line width = 0.5mm] (0,-3.5) -- (0,3.5);
        \filldraw[fill=lightgray] (-2.95,-0.7) rectangle (-3.85,0.7);
        \draw [line width = 0.4mm, color=red] (-3.4,-0.6) -- (-3.4,0.6);
        \filldraw[fill=lightgray] (-1.85,-1.5) rectangle (-2.55,1.5);
           \draw [line width = 0.4mm, color=red] (-2.2,-1.4) -- node[above right, yshift=1.4cm, color=red] {$V_k$} (-2.2,1.4);
        \filldraw[fill=lightgray] (-1.1,-2.5) rectangle (-0.9,2.5);
           \draw [line width = 0.2mm, , color=red] (-1.0,-2.4) -- (-1.0,2.4);
        \filldraw[fill=lightgray] (-0.4,-2.8) rectangle (-0.5,2.8);
        
        \filldraw[fill=black] (-6,-3) circle (0.65mm)node[below right] {$z_{-}$};
        \filldraw[fill=black] (-6,3) circle (0.65mm) node[above right] {$z_{+}$};
        \draw [dotted, line width = 0.3mm] plot [smooth] coordinates{(-6,-3) (-3.3,-0.6) (-3.3,0.6) (-6,3)};
        \draw[dotted, line width = 0.3mm] (-6,-3) -- (-2.1,-1.5); 
                \draw[dotted, line width = 0.3mm] (-6,3) -- (-2.1,1.5); 
                 \path [black, dotted, line width = 0.3mm, out=80, in=-80] (-2.1,-1.5) edge (-2.1,1.5);
        \draw [line width = 0.4mm, color = blue, mylabel= at 0.8 above left with {$c_{k-1}$}] (-6,-3) -- (-2.75,-3) -- (-2.75,3) -- (-6,3);
        \node at (-1.65,-1.6) {$R_k$};
        \node at (-1.4,-2.6) {$R_{k+1}$};
        \node at (-6,4.2) {\Huge $S$};
      \end{tikzpicture}
    \end{minipage}
        \begin{minipage}{0.3\paperwidth}
      \begin{tikzpicture}
        \draw [line width = 0.2mm, dashed] (-6.5,-2) --  (-1,-2);
        \draw [line width = 0.2mm, dashed] (-4.5,1) -- (1,1);
        \draw [line width = 0.5mm, shorten >= -2.5mm, shorten <= -2.5mm] (-1,-2) -- (1,1);
        \draw [line width = 0.5mm, color=gray, dotted] (-4.5,1) -- (-4.5,4);
        \filldraw[fill=lightgray, scale=0.25, xshift=-10cm] (-3.5,-2) -- (-2,-2) -- (0,1) -- (-1.5,1) -- cycle;
        \draw [line width = 0.3mm, color=red] (-3.15,-0.45) -- (-2.75,0.2);
        \filldraw[fill=lightgray, scale=0.5, xshift=-1.5cm] (-3.5,-2) -- (-2.5,-2) -- (-0.5,1) -- (-1.5,1) -- cycle;
         \draw [line width = 0.3mm, color=red] (-2.2,-0.950) -- (-1.3,0.45);
        \filldraw[fill=lightgray, scale=0.75, xshift=1.25cm] (-3.5,-2) -- (-3.25,-2) -- (-1.25,1) -- (-1.5,1) -- cycle;
                 \draw [line width = 0.3mm, color=red] (-1.55,-1.45) -- (-0.12,0.7);
        \filldraw[fill=lightgray, scale=0.9, xshift=-0.1cm] (-1.3,-2) -- (-1.2,-2) -- (0.8,1) -- (0.7,1) -- cycle;
        \draw[line width = 0.4pt, color=lightgray] plot[smooth] coordinates{(-6.5,-2) (-3.2,-0.8) (-2.3,1.2) (-4.5,2.5)} node [above] {};
        \draw [line width = 0.4pt, color=lightgray] plot [smooth] coordinates{(-6.5,-2) (-1.9,-0.8) (-1.,1.5) (-4.5,3.5)};
        \draw (-6.5,-2) -- node [above] {$\gamma_{-}$} (-1.5,0);
        \draw (-4.5,3.8) -- node [above] {$\gamma_{+}$} (0.5,1.5);
        \filldraw[fill=black] (-6.5,-2) circle (0.65mm)node[below right] {$(0,z_{-})$};
        \filldraw[fill=black] (-4.5,2.5) circle (0.65mm) node[left] {$(T_m,z_+)$};
        \filldraw[fill=black] (-4.5,3.8) circle (0.65mm) node[left] {$(T_\infty,z_+)$};
        
        \node at (-1.4,1.6) {$\gamma_k$};
        \node at (-5.,4.8) {\Huge$M=\mathbb{R}\times S$};

      \end{tikzpicture}
    \end{minipage}
    \caption{On the left, the  convex  Riemannian manifold $(S,g_{S})$. The rectangles $R_k$ (in gray) are removed. Some of the geodesics $\sigma_k$ plus $\sigma_{\pm}$ are depicted for visualization. 
    On the right, the corresponding Lorentzian product $M=\Lo\times S$ (the vertical products 
    on $R_k$ are not depicted for a better visualization). The geodesics $\sigma_k$ are lifted into lightlike pregeodesics $\gamma_k$ converging to both, $\gamma_{\pm}$ (also depicted in the figure). }
    \label{fig:3}
  \end{figure}
  
  

\section{Appendix: Anisotropic connections}\label{A_anisotropic}
There are three levels of structures associated with a semi-Finsler metric $L$ relevant for us, each one with increasing complexity: geodesic spray  / nonlinear connection $N$ / anisotropic connection $\nabla$ (see \cite{Javaloyes_2022, Merida}). As seen in Definition \ref{def:main2:13}, $L$ produces such a spray $G$, and it is well known that this spray selects the Berwald nonlinear connection. However, the concept of anisotropic connection is not as well spread in the literature. We used it for an easy comparison with the Levi-Civita connection of a semi-Riemannian manifold. 
Next, our final aim is to introduce the notion of {\em anisotropic connection}  (stressing that it can be understood as a direction dependent affine connection) and to see that $G$, and thus $L$ determines naturally the   {\em Berwald} and the {\em Levi-Civita-Chern} anisotropic connections.
Both share the same geodesics and any of them can be used in Section \ref{s2} (being the first one simpler). However, the Levi-Civita-Chern is more convenient for the proof of Theorem \ref{thm:main2:4}.


The practical rule to define locally the {\em Berwald anisotropic connection} from the formal Christoffel symbols in \eqref{eq:64} is as follows. For each $A$-admissible vector  field $V\in \mathfrak{X}(\withboundaryM)$ (as in Definition~\ref{d_hess})  
 define the affine connection $\nabla^V$:
	\begin{equation}
	\label{e_chris}
	\nabla^V_{\partial_i} {\partial_j}= \Gamma^k_{ij}(x,V(x)) \partial_k. 
\end{equation}
If $\bar V$ is also $A$-admissible, the dependence of  $\Gamma^k_{ij}(x,V(x))$ with $y=V(x)$ implies 
\begin{equation}
	\label{e_anisV}
	\bar V(x_0)= V(x_0) \quad \hbox{at some $x_0\in U$} \qquad \Longrightarrow \qquad \nabla^{\bar V}_{\partial_i} {\partial_j}|_{x_0}=\nabla^V_{\partial_i} {\partial_j}|_{x_0}
\end{equation} 
and the positive $0$-homogeneity of $\Gamma^k_{ij}$  $\nabla^V=\nabla^{\lambda V}$ for $\lambda >0$.
The rule of transformation of the formal Christoffel symbols implies that \eqref{e_chris} is independent of   coordinates. In addition, it matches the following global intrinsic definition of anisotropic connection,  carefully developed in \cite{Javaloyes_2022}.
\begin{definition}
	\label{def:main2:6}
	Let $(\withboundaryM,L,{\withboundaryA})$ be a semi-Finsler manifold with  boundary. An \emph{anisotropic connection} is a map
	\begin{equation}
		\label{eq:23}
		\nablaM: \mathfrak{X}(\withboundaryM)\times \mathfrak{X}(\withboundaryM)\rightarrow \mathfrak{X}(\withboundaryM_{{\withboundaryA}}),\qquad (X,Y)\rightarrow \nablaM_XY,
	\end{equation} satisfying:
	\begin{enumerate}[label=\arabic*)]
		\item $\nablaM_X \left( Y+Z \right)= \nablaM_X Y + \nablaM_XZ$ for any $X,Y,Z\in \mathfrak{X}(\withboundaryM)$,
		\item $\nablaM_X(f Y) = (X(f)Y)\circ \pi_{\withboundaryA} + (f\circ \pi_{\withboundaryA})\nablaM_X Y$ for any $f\in \mathcal{F}(\withboundaryM)$ and $X,Y\in \mathfrak{X}(\withboundaryM)$,
		\item $\nablaM_{fX+hY}Z=\left( f\circ \pi_{\withboundaryA} \right)\nablaM_XZ + (h\circ \pi_{\withboundaryA})\nablaM_YZ$ for any $f,h\in \mathcal{F}(\withboundaryM)$ and $X,Y,Z\in\mathfrak{X}(\withboundaryM)$,
	\end{enumerate}
	being $\mathfrak{X}(\withboundaryM_{{\withboundaryA}})$  the space of \emph{anisotropic vector fields}, that is, the space of sections $\withboundaryA\rightarrow \pi_{\withboundaryA}^{*}(T(M))$, where $\pi_{\withboundaryA}=\pi|_{\withboundaryA}$ is the restriction to $\withboundaryA$ of $\pi:T\withboundaryM\rightarrow \withboundaryM$  
	and $\pi_{\withboundaryA}^*$ denotes the pullback bundle. 
	
	The unique anisotropic connection which is given in local coordinates by \eqref{e_chris} with formal Christoffel symbols in \eqref{eq:64} is called the {\em Berwald anisotropic connection} of $L$.
\end{definition}

The notation $\nablaM_X^vY=\left( \nablaM_XY \right)_v$  stresses \eqref{e_anisV} and, consistently, one has $\nablaM^V_XY:=\left( \nablaM_XY \right)\circ V$, for any $A$-admissible $V$. 

The Levi-Civita-Chern anisotropic connection $\nablaLCC$ is characterized because it is  torsionless and satisfies a Koszul-type formula in terms of the so-called {\em Cartan tensor} (see \cite[Theorem 4]{Javaloyes_2022}). Its Christoffel  symbols $\hat \Gamma^k_{ij}$ can be computed from the expression of the formal ones $\Gamma^k_{ij}$ in \eqref{eq:64} just replacing the partial  derivatives $\partial_l$ by the {\em horizontal} ones:
$$
\left. \frac{\partial}{ \delta x^l}\right|_{(x,y)} := \left. \frac{\partial}{\partial x^l}\right|_{(x,y)} - N_l^k(x,y) \left. \frac{\partial}{\partial y^k}\right|_{(x,y)}
$$
where $N_l^k(x,y)= \Gamma^k_{lj}(x,y) y^j$ are the components of the nonlinear connection. 

\section*{Acknowledgments}
J.H. is partially supported by the Spanish MCIN Grant PID2021-126217NB-
I00 with FEDER funds. M.S. is partially supported by the project PID2020-116126GB-I00 funded by MCIN/ AEI /10.13039/501100011033 and
by the framework of IMAG-María de Maeztu grant CEX2020-001105-M funded by MCIN/AEI/10.13039/50110001103.	

\subsection*{Conflict of Interest Statement}
The authors have no competing interests to declare that are relevant to the content of this article.

\subsection*{Data Availability Statement}
No datasets were generated or analysed during the current study.

\end{document}